\newtheorem{theorem}{Theorem}[section]
\newtheorem{lemma}[theorem]{Lemma}
\newtheorem{lem}[theorem]{Lemma}
\newtheorem{conj}[theorem]{Conjecture}
\newtheorem{prop}[theorem]{Proposition}
\newtheorem{cor}[theorem]{Corollary}
\newtheorem{claim}[theorem]{Claim}
\theoremstyle{definition}
\theoremstyle{remark}
\numberwithin{equation}{section}
\begin{document}

\title{Lagrangians of hypergraphs: The Frankl-F\"uredi conjecture holds almost everywhere}

%    Only \author and \address are required; other information is
%    optional.  Remove any unused author tags.

%    author one information
\author[Mykhaylo Tyomkyn]{Mykhaylo Tyomkyn}
%\author{}
\address{School of Mathematics, Tel Aviv University, Tel Aviv 69978, Israel}
\curraddr{}
\email{tyomkynm@post.tau.ac.il}
\thanks{Supported in part by ERC Starting Grant 633509}

%\subjclass[2010]{Primary}

\date{\today}

\begin{abstract}
Frankl and F\"uredi conjectured in 1989 that the maximum Lagrangian of all $r$-uniform hypergraphs of fixed size $m$ is realised by the initial segment of the colexicographic order. In particular, in the principal case $m=\binom{t}{r}$ their conjecture states that every $H\subseteq \mathbb{N}^{(r)}$ of size $\binom{t}{r}$ satisfies
\begin{align*}
\max \{\sum_{A \in H}\prod_{i\in A} y_i \ \colon \ y_1,y_2,\ldots \geq 0; \sum_{i\in \mathbb{N}} y_i=1 \}&\leq \frac{1}{t^r}\binom{t}{r}.
\end{align*}

We prove the above statement for all $r\geq 4$ and large values of $t$ (the case $r=3$ was settled by Talbot in 2002). More generally, we show for any $r\geq 4$ that the Frankl-F\"uredi conjecture holds whenever $\binom{t-1}{r} \leq m \leq \binom{t}{r}- \gamma_r t^{r-2}$ for a constant $\gamma_r>0$, thereby verifying it for `most' $m\in \mathbb{N}$.

Furthermore, for $r=3$ we make an improvement on the results of Talbot~\cite{Tb} and Tang, Peng, Zhang and Zhao~\cite{TPZZ}.
\end{abstract}

\maketitle

\section{Introduction}

Multilinear polynomials are of central interest in most branches of modern mathematics, and extremal combinatorics is by no means an exception. In particular, a large number of hypergraph Tur\'{a}n problems reduce to calculating or estimating the Lagrangian of a hypergraph, which is a constrained maximum of the multilinear function naturally associated with the hypergraph.

To set the scene, we need a few definitions. We follow standard notation of extremal combinatorics (see e.g.~\cite{Bwhite}). In particular, for $n,r\in \mathbb{N}$, we write $[n]$ for the set $\{1,\dots,n\}$  and, given a set $X$, by $X^{(r)}$ we denote the set family $\{A\subseteq X: |A|=r\}$. %For $m\in \mathbb{N}$ with $m < n$ we write $[m,n]$ for the set $\{m,m+1,\dots,n\}$. 
Dealing with finite families of finite sets we will be freely switching between the set system and the hypergraph points of view: with no loss of generality, we can assume our hypergraphs to be defined on $\mathbb{N}$, yet we write $e(H)$ for the number of sets (`edges') in $H$.

For a finite $r$-uniform hypergraph $H\subseteq [n]^{(r)}$ and a vector of real numbers (referred as a \emph{weighting}) $\vec{y}:=(y_1,\dots, y_n)$ consider a multilinear polynomial function 
$$
L(H,\vec{y}):=\sum_{A \in H}\prod_{i\in A} y_i.
$$ 
The \emph{Lagrangian} of $H$ is defined as its maximum on the standard simplex
$$\lambda(H):=\max\{L(H,\vec{y})\colon y_1,\dots, y_n \geq 0; \sum_{i=1}^n y_i=1 \};$$ 
note that, by compactness, the maximum does always exist (but need not be unique).

The above notion was introduced in 1965 by Motzkin and Strauss~\cite{MzSt} for $r=2$, that is for graphs, in order to give a new proof of Tur\'{a}n's theorem. Later it was extended to uniform hypergraphs, where the Lagrangian plays an important role in governing densities of blow-ups. In particular, using Lagrangians of $r$-graphs, Frankl and R\"odl~\cite{FR} disproved a conjecture of Erd\H{o}s~\cite{Erd} by exhibiting infinitely many non-jumps for hypergraph Tur\'{a}n densities. In the following years the Lagrangian has found numerous applications in hypergraph Tur\'{a}n problems; for more details we refer to a survey by Keevash~\cite{Ke} and the references therein. Further results, which appeared after the publication of~\cite{Ke}, include~\cite{HK} and~\cite{TPZZ}.

In this paper we address the problem of maximising the Lagrangian itself over all $r$-graphs  with a fixed number of edges. Let $H^{m,r}$ be the subgraph of $\mathbb{N}^{(r)}$ consisting of the first $m$ sets in the colexicographic order (recall that this is the ordering on $\mathbb{N}^{(r)}$ in which $A<B$ if $\max(A\triangle B)\in B$).
In 1989 Frankl and F\"uredi~\cite{FF} conjectured that the maximum Lagrangian of an $r$-graph on $m$ edges is realised by $H^{m,r}$. 
%5\footnote{Note that this would also determine $\max \{\lambda(H)\colon H\subseteq [n]^{(r)}, e(H)=m\}$.}
\begin{conj}[\cite{FF}]\label{conj:FF}
$\lambda(H^{m,r})=\max \{\lambda(H)\colon H\subseteq \mathbb{N}^{(r)}, e(H)=m\}$. 
\end{conj}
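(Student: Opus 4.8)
The plan is to establish Conjecture~\ref{conj:FF} in the two ranges announced in the abstract by a double induction: on the uniformity $r$, taking the Motzkin--Strauss formula~\cite{MzSt} ($r=2$) and Talbot's theorem~\cite{Tb} ($r=3$) as starting points, and, for fixed $r$, on the number of edges $m$. Fix $m$ and let $H$ be a maximiser of the Lagrangian among $r$-graphs with $m$ edges, $\vec{y}$ an optimal weighting, $S:=\{i:y_i>0\}$ its support and $s:=|S|$. Since edges meeting $S^{\mathrm c}$ contribute nothing to $L(H,\vec y)$ we have $\lambda(H)=\lambda(H[S])$, and the weight-shifting argument of Frankl and R\"odl lets us assume in addition that $H$ is \emph{$2$-covered} on $S$ (every pair of vertices of $S$ lies in a common edge), whence every $v\in S$ satisfies $\deg_H(v)\ge (s-1)/(r-1)$. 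If $s\le t-1$ we are already done: $\lambda(H)=\lambda(H[S])\le\lambda([s]^{(r)})\le\lambda([t-1]^{(r)})\le\lambda(H^{m,r})$, the last step because $[t-1]^{(r)}\subseteq H^{m,r}$ whenever $m\ge\binom{t-1}{r}$, and the middle step because $p\mapsto\lambda([p]^{(r)})=\binom{p}{r}/p^r$ is strictly increasing. So from now on $s\ge t$, and it remains to compare $\lambda(H)$ with $\lambda(H^{m,r})$ in the cases $s=t$ and $s\ge t+1$.

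For the inductive step choose $v\in S$ of minimum weight $x:=y_v\le 1/s$ and set $d:=\deg_H(v)$. Splitting the edges of $H$ according to whether they contain $v$ gives
\[
L(H,\vec{y})=L(H-v,\vec{z})+x\,L(L_v,\vec{z}),
\]
where $\vec{z}:=\vec{y}|_{S\setminus v}$ has $\ell_1$-norm $1-x$, the $r$-graph $H-v$ (obtained by deleting $v$ and the $d$ edges through it) has $m-d$ edges, and the link $L_v=\{A\setminus v:v\in A\in H\}$ is an $(r-1)$-graph with $d$ edges. By homogeneity $L(H-v,\vec z)\le(1-x)^r\lambda(H-v)$ and $L(L_v,\vec z)\le(1-x)^{r-1}\lambda(L_v)$, so the two induction hypotheses give
\[
\lambda(H)\ \le\ (1-x)^r\,\lambda(H^{m-d,\,r})\;+\;x(1-x)^{r-1}\,\lambda(H^{d,\,r-1}).
\]
When $m-d$ or $d$ falls outside the range to which the inductive hypothesis applies one falls back on the unconditional estimate $\lambda(H^{m',r'})\le\lambda([p]^{(r')})$ with $p$ the number of vertices involved; bookkeeping these boundary cases is routine but unavoidable.

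It then remains to show that the right-hand side above never exceeds $\lambda(H^{m,r})$ over the admissible triples $(x,d,s)$. This is a question about the single function $m'\mapsto\lambda(H^{m',r})$ and its coupling with $\lambda(H^{\cdot,r-1})$, and the inputs I would use are: (i)~a sharp lower bound for $\lambda(H^{m,r})$ from the explicit weighting that is uniform on $[t-1]$ and carries a suitably smaller weight on the vertex $t$; (ii)~the recursion $H^{m,r}-t=[t-1]^{(r)}$ with link $H^{\,m-\binom{t-1}{r},\,r-1}$, valid for $\binom{t-1}{r}\le m<\binom{t}{r}$; and (iii)~an analysis of the right-hand side as a function of $x$ on $[0,1/s]$ and of $d$, showing its maximum occurs at $s=t$ with $v$ in the role of the top vertex of $H^{m,r}$ (so $d=m-\binom{t-1}{r}$), where equality holds. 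The constant $\gamma_r$ is exactly how far $m$ must stay below $\binom{t}{r}$ for these estimates to leave a non-negative margin, and it likewise governs why the colex-initial choice of deleted edges wins in the subcase $s=t$; inside the excluded window $(\binom{t}{r}-\gamma_r t^{r-2},\binom{t}{r})$ the behaviour of $\lambda(H^{m,r})$ is far more delicate, which is why the range restriction is intrinsic to the method. The boundary value $m=\binom{t}{r}$ — the principal case $\lambda(H)\le\binom{t}{r}/t^r$ — has to be handled separately, essentially as a stability statement for $[t]^{(r)}$.

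The main obstacle is the case $s\ge t+1$. There $\lambda(H^{m,r})$ lies strictly between $\lambda([t-1]^{(r)})$ and $\lambda([t]^{(r)})$, two quantities differing by only $\Theta(t^{-2})$, so one must prove that spreading the $m$ edges over at least $t+1$ vertices — rather than concentrating the surplus edges on a single new vertex, as $H^{m,r}$ does — strictly decreases the Lagrangian, by at least the margin produced above. Quantifying this loss, using the first-order optimality conditions, the degree bound $\deg_H(v)\ge(s-1)/(r-1)$ forced by $2$-coverage, and a careful comparison of $\lambda(H^{m,r})$ against $\lambda(H^{m-d,r})$ and $\lambda(H^{d,r-1})$, is where essentially all the difficulty lies; it is also what pins down the best possible $\gamma_r$, and hence underlies the improvement over~\cite{Tb,TPZZ} in the case $r=3$.
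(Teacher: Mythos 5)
Your proposal is not the route the paper takes, and it has a real gap at its core: the inequality you would need to finish the inductive step is not available.

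The paper does \emph{not} induct on $m$. It fixes an extremal $G$ with an optimal weighting $\vec{x}$ supported on $[T]$, assumes for contradiction that $\lambda(G)>\lambda(H^{m,r})$, and works \emph{with the single weighting $\vec{x}$} throughout. Induction on $r$ appears only once and in a coarse way: the first-order identity $L(G_i,\vec{x})=r\lambda(G)$ plus scaling gives $\lambda(G_T)\ge r\lambda(G)$, and the inductive Corollary~\ref{cor:FFapprox} for $r-1$ then forces $T<10t$ (Lemma~\ref{lem:10t}). After that the argument is entirely non-inductive: upper bounds on $x_1$, then on $x_T$ and $x_q$ via the covering-pairs swap and Kruskal--Katona, a bound $\Delta=T-t\le C_0(r)$, and finally a local edge-swap ($\mathcal{A}\leftrightarrow\mathcal{B}$) that directly bounds $\binom{t}{r}-m$. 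At no point is $\lambda(H-v)$ or $\lambda(L_v)$ replaced by its own optimum: the coupling through the common weighting $\vec{x}$ is essential.

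Your decomposition loses exactly that coupling. Writing $\lambda(H)=(1-x)^rL(H-v,\vec z)+x(1-x)^{r-1}L(L_v,\vec z)$ is fine, but bounding the two pieces \emph{separately} by $\lambda(H-v)$ and $\lambda(L_v)$ and then by $\lambda(H^{m-d,r})$ and $\lambda(H^{d,r-1})$ allows each piece to be optimised at a different weighting, and this over-counts. Concretely, at the optimizer $L(L_v,\vec z)=(1-x)^{-(r-1)}r\lambda(H)$, so the link term alone already contributes $rx\lambda(H)$. Meanwhile, for the relevant $d$ one has $\lambda(H^{d,r-1})\ge\lambda([t-2]^{(r-1)})$, and a direct computation gives
\[
\lambda([t-1]^{(r-1)})-r\lambda([t-1]^{(r)})=\frac{1}{(t-1)^{r-1}}\binom{t-2}{r-2}=\Theta\!\left(\frac{1}{t}\right)>0,
\]
and similarly for $[t-2]^{(r-1)}$. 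Thus the ``loss'' introduced by replacing $L(L_v,\vec z)$ with $\lambda(H^{d,r-1})$ is an additive $\Theta(x/t)$ term, which for $x\sim 1/s\sim 1/t$ is of the same order $\Theta(t^{-2})$ as the entire slack $1-(1-x)^r-rx(1-x)^{r-1}=\binom{r}{2}x^2+O(x^3)$ working in your favour. Whether the net sign is right depends on exact constants and on precisely which colex plateau $m-d$ and $d$ fall into; in fact, for $m$ near the top of its plateau (e.g.\ $m=\binom{t}{r}-\binom{t-2}{r-2}$) and $d$ near the Kruskal--Katona ceiling $\frac{r}{t}m$, the residual $m-d$ lands in the excluded window $R_2$ of the $(r)$-induction, so $\lambda(H^{m-d,r})\le\lambda(H^{m,r})$ is not available from your hypothesis and the crude fallback $\lambda(H^{m-d,r})\le\lambda([t-1]^{(r)})$ makes the target inequality fail outright. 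This is not a ``routine boundary case'': it is where the whole difficulty sits, and you have not supplied the extra structural control (on $\Delta=T-t$, on the tail weights $x_T,\dots,x_q$, on $|\{A:\;|A\cap\{t-1,\dots,T\}|\ge 2\}|$) that the paper extracts precisely in order to close this gap. The degree lower bound $\deg(v)\ge (s-1)/(r-1)$ from $2$-covering, which you propose to use, is far too weak for this purpose: the paper instead needs the \emph{upper} bound $\deg(v)\le \frac{r}{s}m$ together with Proposition~\ref{prop:KK}(i) and the derivative identities of Proposition~\ref{prop:derivatives}.

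In short: the step ``it then remains to show that the right-hand side never exceeds $\lambda(H^{m,r})$'' is not a remaining check; it is false as stated for some admissible parameter choices, and correcting it requires abandoning the two-piece decoupling and working with the shared optimal weighting, as the paper does.
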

In an important special case, which we refer to as the \emph{principal case}, Conjecture~\ref{conj:FF} states that for $m=\binom {t}{r}$ the maximum Lagrangian is attained on $H^{m,r}=[t]^{(r)}$, where we have $\lambda(H^{m,r})=\lambda([t]^{(r)})= \frac{1}{t^r}\binom{t}{r}$. While initially the Frankl-F\"uredi conjecture was motivated by applications to hypergraph Tur\'{a}n problems, we think it also interesting in its own right, as it makes a natural and general statement about maxima of multilinear functions. 

For $r=2$ the validity of Conjecture~\ref{conj:FF} is easy to see and follows from the arguments of Motzkin and Strauss~\cite{MzSt}. In fact, the Lagrangian of a graph $H$ is attained by equi-distributing the weights between the vertices of the largest clique of $H$, resulting in $\lambda(H)=\frac{\omega(H)-1}{2\omega(H)}$. Since $H^{m,r}$ has the largest clique size over all graphs on $m$ edges,  Conjecture~\ref{conj:FF} holds.

%Even in the case $r=3$ Conjecture~\ref{conj:FF} is still open in general and for $r\geq 4$ very little has been known up until now.
On the other hand, the situation for hypergraphs is far more complex, since for $r\geq 3$, unlike in the graph case, no direct way of inferring $\lambda(H)$ from the structure of $H$ is known. Hence one is confined to estimating the Lagrangians of different $r$-graphs against each other without calculating them directly.

For $r=3$ Talbot~\cite{Tb} proved that Conjecture~\ref{conj:FF} holds whenever $\binom{t-1}{3}\leq m \leq \binom{t-1}{3}+\binom{t-2}{2}-(t-1)=\binom{t}{3}-(2t-3)$ for some $t\in \mathbb{N}$. Note that this range covers an asymptotic density $1$ subset of $\mathbb{N}$, and also includes the principal case $m=\binom{t-1}{3}$.
%, which shows that complete $3$-graphs do indeed maximise the Lagrangian over all $3$-graphs of their respective size.  
%(note that the last expression equals $\binom{t}{3}-(2t-3)$). 
Recently Tang, Peng, Zhang and Zhao~\cite{TPZZ} extended the above range to $\binom{t-1}{3}\leq m \leq \binom{t-1}{3}+\binom{t-2}{2}-\frac{1}{2}(t-1)$. Furthermore, Conjecture~\ref{conj:FF} is known to hold when $\binom{t}{3}-m$ is a small constant, but for the remaining values of $m$ it is still open. 
 
In contrast to this, for $r\geq 4$ much less has been known so far, as Talbot's proof method for $r=3$, perhaps surprisingly, does not immediately transfer. Talbot showed in the same paper~\cite{Tb} that for every $r\geq 4$ there is a constant $\gamma_r>0$ such that if~$\binom{t-1}{r}\leq m\leq\binom{t}{r}-\gamma_rt^{r-2}$ and $H$ is supported on $t$ vertices (that is, ignoring isolated vertices, $H$ is a subgraph of $[t]^{(r)}$), then indeed $\lambda(H)\leq \lambda(H^{m,r})$. %is indeed maximised by the colex order, 
Still, for no value of $m$, apart from some trivial ones, Conjecture~\ref{conj:FF} has been known to hold. Our main goal in this article is to close this gap by confirming the Frankl-F\"{u}redi Conjecture for `most' values of $m$ for any given $r\geq 4$, including the principal case for large $m$. 
\begin{theorem}\label{thm:main}
For every $r\geq 4$ there exists $\gamma_r>0$ %(one can take $\gamma_r:=\frac{2^{r-2}+1}{r!}$) 
such that for all $\binom{t-1}{r} \leq m \leq \binom{t}{r}- \gamma_r t^{r-2}$ we have
$$\lambda(H^{m,r})=\max \{\lambda(H)\colon H\subseteq \mathbb{N}^{(r)}, e(H)=m\}.$$
\end{theorem}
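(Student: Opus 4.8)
\medskip
\noindent\emph{Sketch of the intended argument.}
Since $H^{m,r}$ is itself an $r$-graph with $m$ edges, it suffices to prove $\lambda(H)\le\lambda(H^{m,r})$ for every $H$ with $e(H)=m$ and $m$ in the stated range. Talbot's theorem quoted above already does this whenever $H$ is supported on at most $t$ vertices, so the plan is to reduce the general case to that one by induction on the size of the support of $H$, with Talbot's theorem as the base case. Fixing $r$, it is enough to treat a maximiser of $\lambda$ among all $r$-graphs with $m$ edges whose support has \emph{minimal} size $N$: by standard facts its optimal weighting $\vec y$ may be taken with all coordinates positive and $y_1\ge\cdots\ge y_N$, every pair of vertices of its support lies in a common edge (otherwise one shifts all the weight off one of the two vertices, contradicting minimality of $N$), and $H$ may be assumed left-compressed, since compressions preserve the number of edges, do not decrease $\lambda$ and do not enlarge the support.

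\smallskip
For any vertex $v$ of $H$ there is the multilinear identity
\[
L(H,\vec y)=(1-y_v)^r\,L(H-v,\vec z)+y_v(1-y_v)^{r-1}\,L(H_v,\vec z),
\]
where $H-v$ denotes $H$ with $v$ deleted, $H_v$ the link of $v$, and $\vec z$ the restriction of $\vec y$ to the remaining vertices, rescaled onto the simplex; maximising over $\vec y$ gives
\[
\lambda(H)\le\max_{0\le\alpha\le1}\big[(1-\alpha)^r\lambda(H-v)+\alpha(1-\alpha)^{r-1}\lambda(H_v)\big].
\]
I would apply this with $v$ the top vertex $N$, which for a left-compressed $H$ is also a vertex of minimum degree, so that $d:=\deg_H(v)\le rm/N$ is small once $N>t$, and $H_v$ is a left-compressed $(r-1)$-graph on $[N-1]$ with full support and exactly $d$ edges. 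The $(r-1)$-case of the theorem, available through a simultaneous induction on $r$ with the cases $r-1\in\{2,3\}$ furnished by the Motzkin--Straus formula and by Talbot's theorem, together with its principal case (which lets one bound $\lambda(H_v)$ by $\lambda([u]^{(r-1)})$ for the least $u$ with $\binom{u}{r-1}\ge d$ when $d$ falls in a density-zero gap just below a binomial coefficient), yields $\lambda(H_v)\le\lambda(H^{d,r-1})$ up to such a controlled loss; and the induction on the support size yields $\lambda(H-v)\le\lambda(H^{m-d,r})$ in the same way. Everything then comes down to the scalar inequality
\[
\max_{0\le\alpha\le1}\big[(1-\alpha)^r\lambda(H^{m-d,r})+\alpha(1-\alpha)^{r-1}\lambda(H^{d,r-1})\big]\ \le\ \lambda(H^{m,r}),
\]
which I would read off from the recursive description of the colexicographic order: the left-hand side is exactly the Lagrangian produced by attaching an $(r-1)$-graph on $d$ edges as the link of a new vertex of weight $\alpha$ to an $r$-graph on $m-d$ edges, and $H^{m,r}$ is assembled in precisely this fashion.

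\smallskip
The main obstacle is this last optimisation, coupled with pinning down the admissible range of the peeled degree $d$. When $r=3$ the link $H_v$ is an ordinary graph, so Motzkin--Straus evaluates $\lambda(H_v)$ outright and the comparison becomes elementary; this is exactly why Talbot's argument works for $r=3$ and ``does not immediately transfer'' to $r\ge4$, where the link is a genuine hypergraph whose Lagrangian one can only access through the $(r-1)$-case, and one must instead show that $H^{m,r}$ beats \emph{every} admissible pair $(d,\alpha)$. This is where the hypotheses are spent: the lower bound $m\ge\binom{t-1}{r}$ keeps $m-d$ within a range reachable by the induction, while the slack $\gamma_r t^{r-2}$ is precisely what absorbs the rounding losses whenever $d$ or $m-d$ lands in one of the density-zero gaps just below a binomial coefficient — which is also the reason the result is obtained only ``almost everywhere'', the values $\binom{t}{r}-\gamma_rt^{r-2}<m<\binom{t}{r}$ remaining out of reach.
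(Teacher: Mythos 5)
Your approach is genuinely different from the paper's, and while it correctly identifies the central difficulty (the link is a hypergraph once $r\ge4$), the sketch contains gaps at exactly the places where the hard work has to be done.

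The paper never performs a vertex-peeling induction on the support size. Instead it fixes a Lagrangian maximiser $G$ with optimal weighting $\vec x$ and minimal support $T$, and attacks $T-t$ directly: first a coarse bound $T<10t$ (Lemma~\ref{lem:10t}), then $x_1\le r/t$ (Lemma~\ref{lem:coarsex1}), then bounds on the tail weights $x_T$ and $x_q$ (Lemmas~\ref{lem:Tbound},~\ref{lem:pushbeyond}), then the key estimate $T-t\le C_0(r)$ (Lemma~\ref{lem:bigmain}), which is obtained by a one-shot comparison $\lambda(G)\le L([T]^{(r)},\vec z)$ with a symmetrised weighting, not by peeling. The $(r-1)$-case (Corollary~\ref{cor:FFapprox}) enters once, to bound $\lambda(G_T)$ in Claim~\ref{cl:sislarge} and hence to control $T$, not to bound the whole Lagrangian. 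The proof is then finished by an edge-swap argument (Lemmas~\ref{lem:swap},~\ref{lem:trick}) that trades edges in $\mathcal A=\{A\in G:|A\cap\{t-1,\dots,T\}|\ge2\}$ for edges in $\mathcal B\subseteq[t]^{(r)}\setminus G$ and derives $|\mathcal B|<2^{r-2}|\mathcal A|=O(t^{r-2})$, which bounds $\binom tr-m$.

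The most substantial gap in your sketch is the scalar inequality
\[
\max_{0\le\alpha\le1}\big[(1-\alpha)^r\lambda(H^{m-d,r})+\alpha(1-\alpha)^{r-1}\lambda(H^{d,r-1})\big]\le\lambda(H^{m,r}).
\]
This is not something you can ``read off'' from the colex recursion, because $H^{m,r}$ is assembled from $[t-1]^{(r)}$ with a link of exactly $m-\binom{t-1}{r}$ edges on the top vertex, whereas your $d$ is the degree of the peeled vertex of $H$, which need not equal $m-\binom{t-1}{r}$. The left-hand side is a genuine maximum over a family of constructions, not the Lagrangian of $H^{m,r}$ itself, and the inequality is in fact extremely tight. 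Writing $a=\lambda(H^{m-d,r})$, $b=\lambda(H^{d,r-1})$, a short computation shows that the maximum over $\alpha$ equals $\frac{(r-1)^{r-1}b^r}{r^r(b-a)^{r-1}}$; plugging in $a\approx\frac1{r!}(1-\binom r2/t)$ and $b\approx\frac1{(r-1)!}(1-\binom{r-1}2/t)$ gives $\frac1{r!}$ to leading order — so the inequality is decided purely at the $O(1/t)$ level, and one needs a precise upper bound $d\le\frac rT m<\binom{t-1}{r-1}-\Theta(t^{r-2})$ coming from the \emph{actual} (as yet unknown) value of $T$, plus a precise bound on $\lambda(H^{d,r-1})$ for such $d$. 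Your sketch does not supply either, nor does it rule out $d$ landing inside one of the gaps of the $(r-1)$-plateau structure, where the fallback bound $\lambda(H^{d,r-1})\le\lambda([u]^{(r-1)})$ overshoots by $\Theta(t^{-1})$ — the very order at which the inequality is decided.

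The induction framework is also not well-posed. ``Induction on support size with Talbot's theorem as base case'' runs into the problem that after one peeling step the number of edges drops from $m$ to $m-d$, which may fall outside the admissible window $[\binom{t'-1}{r},\binom{t'}{r}-\gamma_rt'^{r-2}]$ for the relevant $t'$; the theorem then gives no comparison $\lambda(H-v)\le\lambda(H^{m-d,r})$, only the cruder $\lambda(H-v)\le\lambda([t']^{(r)})$, again a $\Theta(t^{-1})$ loss. And when $T$ is far from $t$ (the coarse bounds of Section~\ref{sec:coarse} only give $T<10t$ a priori) you would need $T-t$ peeling steps, with these losses compounding — which is exactly the regime the paper handles by the non-inductive Lemma~\ref{lem:bigmain} before any edge-counting is attempted. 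In short, your proposal pushes the entire difficulty into a single unproved inequality together with an induction whose hypotheses are not guaranteed to propagate, and the paper's route exists precisely because a clean peeling argument of this type does not close.
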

\noindent
%This includes the principal case. 
\begin{cor}\label{cor:mainclique}
For every $r\geq 4$ there exists $t_r\in \mathbb{N}$ such that for all $t\in \mathbb{N}$ with $t\geq t_r$ we have
$$\max\left\{\lambda(H)\colon H\subseteq \mathbb{N}^{(r)}, e(H)=\binom{t}{r}\right\}=\lambda([t]^{(r)})=\frac{1}{t^r}\binom{t}{r}.$$
\end{cor}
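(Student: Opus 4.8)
The plan is to deduce Corollary~\ref{cor:mainclique} directly from Theorem~\ref{thm:main} by a routine continuity/limiting argument, so the real content lies entirely in the theorem. First I would observe that for $m=\binom{t}{r}$ the colex hypergraph $H^{m,r}$ is exactly $[t]^{(r)}$, and that $\lambda([t]^{(r)})=\frac{1}{t^r}\binom{t}{r}$, which is the standard computation obtained by putting weight $1/t$ on each of the $t$ vertices (optimality of the uniform weighting on a complete uniform hypergraph follows from symmetry and concavity along the relevant directions). So the corollary is just the $m=\binom{t}{r}$ instance of Theorem~\ref{thm:main}, provided $\binom{t}{r}$ lies in the admissible window $\binom{t-1}{r}\le m\le \binom{t}{r}-\gamma_r t^{r-2}$.

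The catch is that $m=\binom{t}{r}$ sits at the very top of the window, where the upper bound $\binom{t}{r}-\gamma_r t^{r-2}$ is violated. The fix is to apply Theorem~\ref{thm:main} not at $t$ but at $t+1$: for $t$ large enough we have
\begin{align*}
\binom{t}{r}&\le \binom{(t+1)-1}{r}=\binom{t}{r}\le \binom{t+1}{r}-\gamma_r (t+1)^{r-2},
\end{align*}
where the left inequality is an equality and the right one holds because $\binom{t+1}{r}-\binom{t}{r}=\binom{t}{r-1}=\Theta(t^{r-1})$ dominates $\gamma_r(t+1)^{r-2}$ once $t\ge t_r$ for a suitable threshold $t_r$ depending only on $r$ and $\gamma_r$. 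Hence $m=\binom{t}{r}$ lies in the window associated with $t+1$, and Theorem~\ref{thm:main} gives $\max\{\lambda(H):e(H)=m\}=\lambda(H^{m,r})$. Since $H^{m,r}$ with $m=\binom{t}{r}$ is precisely $[t]^{(r)}$ (the first $\binom{t}{r}$ sets in colex order are all $r$-subsets of $[t]$), we get $\lambda(H^{m,r})=\lambda([t]^{(r)})=\frac{1}{t^r}\binom{t}{r}$, as claimed.

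The only step that requires any care is checking $H^{\binom{t}{r},r}=[t]^{(r)}$ and evaluating $\lambda([t]^{(r)})$; both are classical, the first because in colex order an $r$-set is small exactly when its maximum element is small, so the initial $\binom{t}{r}$ sets are exactly those contained in $[t]$, and the second because on the complete $r$-graph the symmetric weighting is optimal (any optimal weighting can be symmetrised without decreasing $L$, and the support can be taken to be an edge-containing clique). I do not anticipate a genuine obstacle here: the corollary is a clean specialisation of the main theorem once one shifts $t\mapsto t+1$ to land inside the admissible range.
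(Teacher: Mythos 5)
Your argument is correct and is exactly the intended derivation: the paper gives no separate proof of the corollary because it follows immediately from Theorem~\ref{thm:main} once one notices, as you do, that $m=\binom{t}{r}=\binom{(t+1)-1}{r}$ sits at the \emph{lower} end of the admissible window for parameter $t+1$, and that the right endpoint condition $\binom{t}{r}\le\binom{t+1}{r}-\gamma_r(t+1)^{r-2}$ holds for large $t$ since $\binom{t+1}{r}-\binom{t}{r}=\binom{t}{r-1}=\Theta(t^{r-1})$. The identification $H^{\binom{t}{r},r}=[t]^{(r)}$ and the evaluation $\lambda([t]^{(r)})=\frac{1}{t^r}\binom{t}{r}$ (equation~\eqref{eq:mcl2} in the paper, via Maclaurin) are both standard, as you say.
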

%proof of a stronger statement
By monotonicity, we obtain another immediate corollary, which can be viewed as a strong approximate version of Conjecture~\ref{conj:FF}.
\begin{cor}\label{cor:FFapprox}
For every $r\geq 4$ there exists $t_r\in \mathbb{N}$ such that for all $t\geq t_r$ the following holds. Suppose that $\binom{t-1}{r}<m\leq \binom{t}{r}$ and that $H$ is an $r$-graph with $e(H)=m$. Then
$$\lambda(H)\leq \frac{1}{t^r}\binom{t}{r}.
$$
\end{cor}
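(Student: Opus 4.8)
The plan is to deduce this directly from Corollary~\ref{cor:mainclique} by means of the elementary monotonicity of the Lagrangian under adding edges. First I would record the following observation: if $H\subseteq H'$ are $r$-graphs (on $\mathbb{N}$), then $\lambda(H)\le\lambda(H')$. Indeed, for any weighting $\vec y$ with $y_i\ge 0$ and $\sum y_i=1$ every monomial $\prod_{i\in A}y_i$ is non-negative, so $L(H,\vec y)\le L(H',\vec y)$; taking the maximum over the simplex on both sides preserves the inequality.

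Now let $t\ge t_r$ (with $t_r$ as in Corollary~\ref{cor:mainclique}), suppose $\binom{t-1}{r}<m\le\binom{t}{r}$, and let $H$ be an $r$-graph with $e(H)=m$. Since $m\le\binom{t}{r}$, I can enlarge $H$ to an $r$-graph $H'$ with $H\subseteq H'$ and $e(H')=\binom{t}{r}$ — for instance by appending $\binom{t}{r}-m$ arbitrary new $r$-edges (say on vertices lying outside the finite support of $H$, so that these edges are genuinely new). Applying Corollary~\ref{cor:mainclique} to $H'$ gives $\lambda(H')\le\frac{1}{t^r}\binom{t}{r}$, and combining this with the monotonicity step yields
\[
\lambda(H)\le\lambda(H')\le\frac{1}{t^r}\binom{t}{r},
\]
which is the claim. (An equivalent route, which the phrase ``by monotonicity'' may be alluding to, is to use that $m\mapsto\lambda(H^{m,r})$ is non-decreasing: for $m$ in the range of Theorem~\ref{thm:main} this already gives $\lambda(H)\le\lambda(H^{m,r})\le\lambda(H^{\binom{t}{r},r})=\frac{1}{t^r}\binom{t}{r}$, and the edge-addition argument above then mops up the remaining values $\binom{t}{r}-\gamma_r t^{r-2}<m\le\binom{t}{r}$.)

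I do not expect a genuine obstacle here: all the content sits in Corollary~\ref{cor:mainclique} (hence in Theorem~\ref{thm:main}), and the remaining argument is a one-line monotonicity deduction. The only point deserving a moment's care is that the enlargement $H'$ must be an honest $r$-graph with exactly $\binom{t}{r}$ edges so that Corollary~\ref{cor:mainclique} applies verbatim, which is immediate from $m\le\binom{t}{r}$; the hypothesis $\binom{t-1}{r}<m$ is not actually needed for the conclusion and serves only to identify the value of $t$ for which the bound $\frac{1}{t^r}\binom{t}{r}$ is the natural (sharp) one.
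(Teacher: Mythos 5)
Your proof is correct and is exactly what the paper's one-word justification ``By monotonicity'' intends: enlarge $H$ to an $r$-graph with $\binom{t}{r}$ edges, apply Corollary~\ref{cor:mainclique}, and use that adding edges can only increase the Lagrangian. Nothing to add.
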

When $H$ is supported on $[t]$ we give a proof of a stronger statement, namely that in this case we can take $\gamma_r=(1+o(1))/(r-2)!$ in Theorem~\ref{thm:main}. More precisely, we claim the following. 
\begin{theorem}\label{thm:refine}
For every $r\geq 3$ there exists a constant $\delta_r>0$ such that for all $\binom{t-1}{r} \leq m \leq \binom{t}{r}- \binom{t-2}{r-2}-\delta_rt^{r-9/4}$ we have 
$$\lambda(H^{m,r})=\max \{\lambda(H)\colon H\subseteq [t]^{(r)}, e(H)=m\}.$$
\end{theorem}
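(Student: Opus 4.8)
The plan is to compare an arbitrary $H\subseteq[t]^{(r)}$ with $e(H)=m$ in the stated range against $H^{m,r}$ by a density/local-move argument. Write $\overline H=[t]^{(r)}\setminus H$, so $e(\overline H)=\binom{t}{r}-m\geq \binom{t-2}{r-2}+\delta_r t^{r-9/4}$; the point of this reformulation is that the ``missing'' edges form a hypergraph of controlled size, and $\lambda(H)$ should be governed by how efficiently $\overline H$ can be made to avoid a heavy vertex. First I would invoke Talbot's result (quoted in the excerpt): since $H$ is supported on $[t]$ and $m\leq\binom{t}{r}-\gamma_r t^{r-2}$, we already have $\lambda(H)\leq\lambda(H^{m,r})$ in the crude range. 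So the entire content of Theorem~\ref{thm:refine} is pushing the threshold from $\binom{t}{r}-\gamma_r t^{r-2}$ down to $\binom{t}{r}-\binom{t-2}{r-2}-\delta_r t^{r-9/4}$, i.e. shaving the last $\Theta(t^{r-2})$ values of $m$ down to an error term of order $t^{r-9/4}$. This means I only need to handle $H$ with $\binom{t}{r}-\gamma_r t^{r-2}< m\leq\binom{t}{r}-\binom{t-2}{r-2}-\delta_r t^{r-9/4}$, i.e. $\overline H$ with $\binom{t-2}{r-2}+\delta_r t^{r-9/4}\leq e(\overline H)< \gamma_r t^{r-2}$ — a genuinely sparse complement.

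The main tool is the standard first-order analysis of an optimal weighting $\vec y$ for $H$. Fixing an optimal $\vec y$ with support $S$, the Lagrange conditions give that for every $i\in S$ the ``link value'' $\partial_i L(H,\vec y)=\sum_{A\in H,\,i\in A}\prod_{j\in A\setminus i}y_j$ equals a common value $\mu=\lambda(H)/\big(\tfrac1r\big)$... more precisely $\sum_{i}y_i\partial_i L=r\lambda(H)$ so each nonzero $y_i$ sits at the common derivative value $r\lambda(H)$. Standard arguments (as in Talbot, Frankl–Füredi) let me assume the support is ``dense'': one shows that an optimal $H$ can be taken on exactly $t$ vertices with no vertex too light, and in particular that $\overline H$ has minimum degree comparable to its average, so that $\overline H$ spreads over all of $[t]$ rather than concentrating. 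Then I would estimate $\lambda(H)=\lambda([t]^{(r)})-\big(\lambda([t]^{(r)})-\lambda(H)\big)$ by writing $L([t]^{(r)},\vec y)-L(H,\vec y)=\sum_{A\in\overline H}\prod_{i\in A}y_i$ and bounding this below: because $\overline H$ is spread out and $\vec y$ is near-uniform (a consequence of $e(\overline H)=o(t^{r-1})$, which forces $\vec y$ to be within $O(t^{-1/2}\cdot\text{something})$ of uniform — here is where the exponent $9/4$, i.e. the slack $t^{r-2}$ versus $t^{r-9/4}=t^{r-2}\cdot t^{1/4}$, will come from), the product $\prod_{i\in A}y_i$ is $\approx t^{-r}$ for each of the $\geq\binom{t-2}{r-2}$ edges of $\overline H$, giving $\lambda([t]^{(r)})-\lambda(H)\gtrsim \binom{t-2}{r-2}t^{-r}$. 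On the other side, $\lambda([t]^{(r)})-\lambda(H^{m,r})=\lambda([t]^{(r)})-\lambda(H^{\binom{t}{r}-e(\overline H),r})$ is at most roughly $e(\overline H)\cdot t^{-r}$ if the missing edges in the colex order were ``free'', but in fact $H^{m,r}$ removes edges optimally — removing all edges through a single vertex first — so its deficit is strictly smaller, essentially $\big(e(\overline H)-\text{(saving from a common vertex)}\big)t^{-r}$. The inequality $\lambda(H)\leq\lambda(H^{m,r})$ then reduces to showing that $\overline H$, being spread out, ``wastes'' at least $\binom{t-2}{r-2}-O(t^{r-9/4})$ more than the colex complement does, which is exactly the threshold in the statement.

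The heart of the matter, and the step I expect to be the main obstacle, is making the above comparison quantitatively tight enough to reach the $\binom{t-2}{r-2}$ main term with only a $t^{r-9/4}$ error rather than a $t^{r-2}$ error. This requires (i) a sharp stability statement that near-optimal $H$ must have $\overline H$ concentrated on few vertices — ideally essentially a star, i.e. all missing edges through one vertex — and (ii) a sharp bound on how far an optimal $\vec y$ can deviate from uniform when $e(\overline H)$ is in this narrow window, since the quadratic-in-deviation corrections to $\prod y_i$ are what must be beaten. Concretely, I would set up a perturbation: if $v$ is the lightest vertex of $H$ under $\vec y$, move all the missing edges to pass through $v$ and re-optimise; showing this does not decrease the Lagrangian, up to an error of order $t^{r-9/4}$, uses that $y_v=t^{-1}(1+O(t^{-1/4}))$ and a second-order Taylor expansion of $L$ around $\vec y$. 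Balancing the gain from concentrating $\overline H$ (of order $e(\overline H)\cdot t^{-r}$ per edge moved off $v$, times a positive correlation factor) against the loss from re-optimising the weights (of order $\|\Delta\vec y\|^2\cdot t^{r-2}$) is where the exponent $9/4$ is pinned down, and getting the constants and the direction of the inequality right there is the delicate part; once $\overline H$ is reduced to a subgraph of a star through one vertex, comparison with $H^{m,r}$ is direct because $H^{m,r}$ in this range is exactly $[t]^{(r)}$ minus an initial colex segment of a star.
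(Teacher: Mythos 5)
There is a genuine gap. Your key lower bound is that $L(\overline H,\vec{y})\approx e(\overline H)\,t^{-r}\gtrsim\binom{t-2}{r-2}t^{-r}$, on the grounds that $\vec{y}$ is near-uniform and $\overline H$ is spread out. But this is internally inconsistent with the structural endpoint you also insist on: you want to reduce to $\overline H$ essentially a star through one (light) vertex $v$, yet if $\overline H$ is a star through $v$ with $y_v\ll 1/t$ --- and the extremal weighting does make $y_v$ small, possibly zero --- then $L(\overline H,\vec{y})\approx y_v\,e(\overline H)\,t^{-(r-1)}$ is far below $e(\overline H)\,t^{-r}$, and the lower bound on the deficit $\lambda([t]^{(r)})-\lambda(H)$ evaporates. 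You flag this as ``the heart of the matter'' without resolving it, and the proposed fix --- a second-order Taylor expansion after re-optimising, balancing a gain of order $e(\overline H)t^{-r}$ against a loss of order $\|\Delta\vec{y}\|^2 t^{r-2}$ --- is not carried out; it is not shown which way the second-order terms actually point, nor why the balance should land at exponent $9/4$ rather than somewhere else.

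The paper's proof avoids any near-uniformity or stability claim. After establishing $x_1<1/(t-r+1)$ (which your sketch also has, via the derivative identity plus induction), the key new input is a purely combinatorial averaging fact: for $k\sim t^{1/4}$, one must have $x_1<(1+1/k)\,x_{t-(k+1)r}$, since otherwise $\sum_i x_i>1$. This controls the bulk weights multiplicatively while saying nothing at all about the tail $x_{t-(k+1)r+1},\dots,x_t$ --- precisely the freedom your near-uniformity assumption over-constrains. With this ratio bound, the paper does a discrete edge swap, comparing $L(A,\vec{x})\leq x_1^{r-2}x_{t-1}x_t$ for $A$ containing $\{t-1,t\}$ against $L(B,\vec{x})\geq x_{t-(k+1)r}^{r-2}x_{t-1}x_t$ for a missing $B$ meeting the tail window $\{t-(k+1)r+1,\dots,t\}$ in at most two vertices, giving $L(B,\vec{x})>(k/(k+1))^{r-2}L(A,\vec{x})$ and hence $|\mathcal B|<(1+r/k)|\mathcal A|$. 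A direct count of the missing edges --- split into $\mathcal B$, edges through $\{t-1,t\}$, and edges hitting the tail at least three times --- then yields $\binom{t}{r}-m\leq\binom{t-2}{r-2}+O(t^{r-9/4})$, with $k\sim t^{1/4}$ chosen to balance $r\binom{t-2}{r-2}/k$ against $O(k^3t^{r-3})$. No Taylor expansion, no re-optimisation, and no need to first invoke Talbot's crude $\gamma_r t^{r-2}$ result.
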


For $r=3$ it was implicitly shown by Talbot in~\cite{Tb} that for any $\binom{t-1}{3}<m\leq \binom{t}{3}$, that is for all $m\in \mathbb{N}$, the $3$-graph maximising the Lagrangian amongst all $m$-edge $3$-graphs can be assumed to be supported on $[t]$. Combined with Theorem~\ref{thm:refine}, this yields, for large $m$, an improvement of the bounds in~\cite{Tb} and~\cite{TPZZ}.
\begin{cor}
There exists a constant $\delta_3>0$ such that for all $\binom{t-1}{3} \leq m \leq \binom{t}{3}- (t-2) -\delta_3t^{3/4}$ we have 
$$\lambda(H^{m,r})=\max \{\lambda(H)\colon H\subseteq \mathbb{N}^{(r)}, e(H)=m\}.$$
\end{cor}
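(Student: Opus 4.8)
The plan is to derive the corollary by combining Theorem~\ref{thm:refine} (taken with $r=3$) with a reduction, due in essence to Talbot~\cite{Tb}, of the Lagrangian maximisation over all $m$-edge $3$-graphs to the one over $3$-graphs supported on $[t]$. This breaks into three steps.

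\textbf{Step 1 (reduction to hypergraphs on $t$ vertices).} I would first isolate the statement --- implicit in~\cite{Tb} --- that for every $m$ with $\binom{t-1}{3}\le m\le\binom{t}{3}$,
\[
\max\{\lambda(H)\colon H\subseteq\mathbb{N}^{(3)},\ e(H)=m\}=\max\{\lambda(H)\colon H\subseteq[t]^{(3)},\ e(H)=m\}.
\]
Here ``$\ge$'' is trivial, so it is enough to show that any $m$-edge $3$-graph $H$ whose support has $s>t$ vertices satisfies $\lambda(H)\le\lambda([t-1]^{(3)})$; this suffices because $H^{m,3}\subseteq[t]^{(3)}$ for $m\le\binom{t}{3}$ and $H^{\binom{t-1}{3},3}\subseteq H^{m,3}$, so the right-hand side above is at least $\lambda(H^{m,3})\ge\lambda(H^{\binom{t-1}{3},3})=\lambda([t-1]^{(3)})$. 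To bound such an $H$, I would pick an optimal weighting $\vec y$ with support $S$ of the least possible size $s$; the standard argument (pushing weight to the boundary between two vertices of $S$ lying in no common edge) then forces every pair of $S$ to lie in an edge of $H$. Since $m\le\binom{t}{3}<\binom{s}{3}$, the graph $H$ omits at least $\binom{s}{3}-\binom{t}{3}\ge\binom{t}{2}$ of the triples spanned by $S$, and a direct estimate --- this is the computation carried out in~\cite{Tb} --- converts this deficit into a quantitative gap that puts $L(H,\vec y)$ strictly below $\lambda([t-1]^{(3)})$ for every $s>t$. (For the boundary value $m=\binom{t-1}{3}$ one runs the same argument with $t-1$ in place of $t$; either way the extremal $H$ may be taken inside $[t]^{(3)}$.)

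\textbf{Step 2 (specialising Theorem~\ref{thm:refine}).} With $r=3$ we have $\binom{t-2}{r-2}=\binom{t-2}{1}=t-2$ and $t^{r-9/4}=t^{3/4}$, so Theorem~\ref{thm:refine} provides a constant $\delta_3>0$ such that
\[
\lambda(H^{m,3})=\max\{\lambda(H)\colon H\subseteq[t]^{(3)},\ e(H)=m\}
\]
holds for all $m$ with $\binom{t-1}{3}\le m\le\binom{t}{3}-(t-2)-\delta_3t^{3/4}$.

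\textbf{Step 3 (combining).} For $m$ in the range of Step~2 one has $m<\binom{t}{3}$, so Step~1 applies; concatenating the two displayed equalities yields $\lambda(H^{m,3})=\max\{\lambda(H)\colon H\subseteq\mathbb{N}^{(3)},\ e(H)=m\}$, which is the assertion. Thus the only step that requires genuine work is Step~1, and even there the core estimate is already present in~\cite{Tb}; the point to be careful about is to keep the implication ``large edge-count deficit $\Rightarrow$ small Lagrangian'' uniform over \emph{all} supports of size $s>t$, not merely $s$ barely exceeding $t$. Steps~2 and~3 are bookkeeping.
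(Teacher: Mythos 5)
Your proposal is correct and follows essentially the same route the paper intends: cite Talbot's implicit reduction (that for $r=3$ and $\binom{t-1}{3}\le m\le\binom{t}{3}$ the Lagrangian-maximising $3$-graph can be taken to be supported on $[t]$) and then specialise Theorem~\ref{thm:refine} to $r=3$, where $\binom{t-2}{r-2}=t-2$ and $t^{r-9/4}=t^{3/4}$. The paper itself gives no more than this one-line derivation, so your Step~1 sketch of the Talbot reduction is a reasonable expansion of what is left implicit there.
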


Our proofs use a number of previously known properties of the Lagrangian, as well as induction on $r$ and some facts about uniform set systems such as the Kruskal-Katona theorem. In the following section we shall collect these tools and give an outline of the proofs that will be presented in the subsequent sections.

\section{Notation and preliminaries}\label{sec:prelim}

Let $r\geq 2$ be an integer. Given an $r$-graph $H$ and a set $S\subseteq \mathbb{N}$ with $|S|< r$, the $(r-|S|)$-uniform \emph{link hypergraph} of $S$ is defined as 
$$H_S:=\{A\in \mathbb{N}^{(r-|S|)}\colon A\cup S \in H\}.$$ 
To simplify notation we omit the parentheses and write, for instance, $H_{1,2}$ for $H_{\{1,2\}}$. 

Let us now recall the following standard fact about left-compressed set systems. For the definition of left-compressions (also known as left-shifts) see e.g.~\cite{Bwhite}.

\begin{prop}\label{prop:KK}
Let $n\in \mathbb{N}$ and let $H\subseteq [n]^{(r)}$ be left-compressed with $e(H)=\binom{x}{r}$ for some $n\geq x\geq r$ ($x$ being not necessarily an integer). Then 
\begin{itemize}
\item[(i)]$$e(H_1)\geq \binom{x-1}{r-1}=\frac{r}{x}e(H)$$
\item[(ii)] For all $1\leq j\leq n$ we have $$e(H_j) \leq \frac{r}{j}e(H).$$ 
\end{itemize}
% = \frac{r}{x}\binom{x}{r} \geq 
\end{prop}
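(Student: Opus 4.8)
The plan is to deduce both parts from the Kruskal--Katona theorem together with the structure of left-compressed families. For part~(i), observe that since $H$ is left-compressed, for every edge $A\in H$ with $1\notin A$ and every $j\in A$ we have $(A\setminus\{j\})\cup\{1\}\in H$; consequently the link $H_1$ of the vertex $1$ is itself left-compressed on $[n]\setminus\{1\}$, and moreover $H_1$ contains the \emph{shadow} of the family $H^{\mathbf{1}}:=\{A\in H: 1\notin A\}$ of edges avoiding $1$. (Here, given an $s$-graph $G$, its shadow is $\partial G:=\{B\in\mathbb{N}^{(s-1)}: B\subseteq A\text{ for some }A\in G\}$.) Indeed if $A\in H$ and $1\notin A$, then for any $j\in A$ left-compression gives $(A\setminus\{j\})\cup\{1\}\in H$, i.e.\ $A\setminus\{j\}\in H_1$; thus $\partial H^{\mathbf 1}\subseteq H_1$. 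Writing $e(H^{\mathbf 1})=\binom{x}{r}-e(H_1)$ and applying Kruskal--Katona to bound $|\partial H^{\mathbf 1}|$ from below in terms of $e(H^{\mathbf 1})$, one obtains $e(H_1)\ge |\partial H^{\mathbf 1}|\ge$ (a function of $\binom{x}{r}-e(H_1)$); a short convexity/monotonicity argument then forces $e(H_1)\ge\binom{x-1}{r-1}$. The identity $\binom{x-1}{r-1}=\frac{r}{x}\binom{x}{r}$ is immediate from the definition of the generalized binomial coefficient, which also handles the non-integer case of $x$.

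For part~(ii), I would first reduce to $j=n$: since $H$ is left-compressed, $e(H_1)\ge e(H_2)\ge\cdots\ge e(H_n)$, so it suffices to prove $e(H_n)\le\frac{r}{n}e(H)$, and then the bound for general $j$ follows because $\frac{r}{j}e(H)\ge\frac{r}{j}\sum_{i=1}^n\frac{e(H_i)}{r}=\frac{1}{j}\sum_{i=1}^n e(H_i)\ge e(H_j)$ once we know the $e(H_i)$ are non-increasing and sum (with multiplicity $r$) to $re(H)$. Concretely, $\sum_{i=1}^{n}e(H_i)=\sum_{A\in H}|A|=r\,e(H)$ because each edge $A$ is counted once in $H_i$ for each of its $r$ vertices $i$. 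Since the summands are non-increasing in $i$, the $j$-th one is at most the average of the first $j$, which is at most the overall average $\frac{r\,e(H)}{n}\le\frac{r\,e(H)}{j}$; this gives $e(H_j)\le\frac{r}{j}e(H)$ for every $1\le j\le n$ in one stroke, so the reduction to $j=n$ is not even needed.

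The only genuine content is therefore part~(i), and the main obstacle there is extracting the clean bound $e(H_1)\ge\binom{x-1}{r-1}$ from Kruskal--Katona when $x$ is not an integer. The cleanest route is to use the Lovász form of Kruskal--Katona: if $e(G)=\binom{y}{s}$ with $y\ge s$ real, then $|\partial G|\ge\binom{y}{s-1}$. Applying this to $G=H^{\mathbf 1}$ with $e(H^{\mathbf 1})=\binom{y}{r}$ and combining $\partial H^{\mathbf 1}\subseteq H_1$ with the edge count $e(H_1)+\binom{y}{r}=\binom{x}{r}$ yields $e(H_1)\ge\binom{y}{r-1}$ and $\binom{y}{r}\le\binom{x}{r}-e(H_1)\le\binom{x}{r}-\binom{y}{r-1}$; writing $f(y)=\binom{y}{r}+\binom{y}{r-1}=\binom{y+1}{r}$, the last inequality reads $\binom{y+1}{r}\le\binom{x}{r}$, hence $y+1\le x$, hence $e(H_1)\ge\binom{y}{r-1}\ge\binom{x-1}{r-1}$ by monotonicity of $z\mapsto\binom{z}{r-1}$ on $[r-1,\infty)$. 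One should double-check the degenerate cases ($e(H^{\mathbf 1})=0$, or $x$ close to $r$) separately, but these are trivial. I would present part~(i) via this Lovász-form computation and dispatch part~(ii) with the one-line averaging argument above.
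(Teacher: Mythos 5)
Your part (ii) is essentially the paper's double-counting argument: $\sum_{i=1}^n e(H_i)=re(H)$ combined with $e(H_1)\ge\cdots\ge e(H_n)$ (this is where left-compression enters) gives $je(H_j)\le\sum_{i=1}^j e(H_i)\le re(H)$. One sentence in your ``Concretely'' paragraph is misstated --- for a non-increasing sequence the average of the first $j$ terms is at \emph{least}, not at most, the overall average --- but the chain $\frac{1}{j}\sum_{i=1}^n e(H_i)\ge\frac{1}{j}\sum_{i=1}^j e(H_i)\ge e(H_j)$ that you wrote earlier is the correct one and already suffices.

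For part (i) your strategy --- show $\partial H^{\mathbf 1}\subseteq H_1$ from left-compression and invoke the Lov\'asz form of Kruskal--Katona --- is exactly the standard argument the paper defers to. The final inference, however, points the wrong way. From $\binom{y+1}{r}=\binom{y}{r}+\binom{y}{r-1}\le\binom{x}{r}$ you correctly get $y\le x-1$; but monotonicity of $z\mapsto\binom{z}{r-1}$ then yields $\binom{y}{r-1}\le\binom{x-1}{r-1}$, not $\ge$, so the chain $e(H_1)\ge\binom{y}{r-1}\ge\binom{x-1}{r-1}$ does not close. The repair is to finish via the edge count rather than via the shadow bound: $e(H_1)=\binom{x}{r}-\binom{y}{r}$, and $y\le x-1$ gives $\binom{y}{r}\le\binom{x-1}{r}$, whence $e(H_1)\ge\binom{x}{r}-\binom{x-1}{r}=\binom{x-1}{r-1}$. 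With that one-line correction (using only quantities you already derived) the argument is complete.
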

\begin{proof}[Proof sketch]
(i) is essentially equivalent to Kruskal-Katona theorem and is proved alongside the same lines, see e.g.~\cite{Bwhite}. (ii) follows by double-counting, using that $H$ is left-compressed:%\footnote{This can be strengthened in the spirit of the first inequality, using right-compression on the complements}
$$je(H_j)\leq \sum_{i=1}^j e(H_i)\leq \sum_{i=1}^n e(H_i) = re(H).
$$ 
\end{proof}
\noindent
Next, we state a well-known inequality for elementary symmetric polynomials, which is a special case of Maclaurin's inequality.
\begin{lem}\label{lem:mcl}
For all $n\in\mathbb{N}$ and $y_1,\dots,y_n\geq 0$ with $\sum_{i\in [n]}y_i=Y$ one has 
\begin{align}\label{eq:mcl}
\sum_{I\in [n]^{(r)}}\prod_{i\in I}y_i \leq \binom{n}{r}\left(\frac{Y}{n}\right)^r<\frac{Y^r}{r!}.
\end{align}
In particular,
\begin{align}\label{eq:mcl2}
\lambda([n]^{(r)})=\frac{1}{n^r}\binom{n}{r}<\frac{1}{r!}.
\end{align}
\end{lem}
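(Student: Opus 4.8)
The plan is to obtain the first inequality of~\eqref{eq:mcl} from Maclaurin's inequality, to deduce the second from the elementary estimate $\binom{n}{r}<n^r/r!$, and finally to derive~\eqref{eq:mcl2} by evaluating $L([n]^{(r)},\cdot)$ at the uniform weighting.

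For the first inequality, observe that $\sum_{I\in[n]^{(r)}}\prod_{i\in I}y_i$ is precisely the $r$-th elementary symmetric polynomial $e_r(y_1,\dots,y_n)$, while $Y=e_1(y_1,\dots,y_n)$. Maclaurin's inequality for nonnegative reals asserts that $\bigl(e_r/\binom{n}{r}\bigr)^{1/r}\le e_1/\binom{n}{1}$, which rearranges exactly to $e_r\le\binom{n}{r}(Y/n)^r$; so it suffices to invoke this standard fact. If a self-contained argument is preferred, I would instead run a smoothing step: $e_r$ is continuous on the compact simplex $\{\vec y\ge 0:\sum_i y_i=Y\}$ and hence attains its maximum at some $\vec y^\ast$; freezing all coordinates but two, $y_i$ and $y_j$, and keeping $y_i+y_j=s$ fixed, one has $e_r=\alpha+\beta\, y_iy_j$ with $\alpha,\beta\ge 0$ ($\beta$ being the $(r-2)$-th elementary symmetric polynomial of the frozen coordinates), which is nondecreasing in $y_iy_j$ and thus maximised at $y_i=y_j=s/2$. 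Iterating this step, together with the observation that in the degenerate case $\beta=0$ one may freely redistribute the mass onto any $r$ fixed coordinates without loss, forces a maximiser to the barycenter $y_i\equiv Y/n$, where $e_r(\vec y^\ast)=\binom{n}{r}(Y/n)^r$.

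The second inequality is immediate: since $r\ge 2$ and $Y>0$, we have $\binom{n}{r}=\tfrac{n(n-1)\cdots(n-r+1)}{r!}<\tfrac{n^r}{r!}$, and multiplying through by $(Y/n)^r$ gives $\binom{n}{r}(Y/n)^r<Y^r/r!$. Finally, for~\eqref{eq:mcl2} take $y_i=1/n$ for every $i\in[n]$, so that $Y=1$ and $L([n]^{(r)},\vec y)=\binom{n}{r}n^{-r}$; by the first inequality of~\eqref{eq:mcl} no admissible weighting exceeds this value, whence $\lambda([n]^{(r)})=\binom{n}{r}/n^r$, which is $<1/r!$ by the second inequality.

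I do not anticipate a genuine obstacle, as this is a classical statement; the only mild care needed is in arguing that the smoothing procedure terminates in the degenerate case $\beta=0$, which is exactly why citing Maclaurin's inequality directly is the cleanest route.
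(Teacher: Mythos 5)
Your proposal is correct and follows essentially the same route as the paper, which simply cites the statement as a special case of Maclaurin's inequality: the first bound is exactly Maclaurin's inequality $\bigl(e_r/\tbinom{n}{r}\bigr)^{1/r}\le e_1/n$, the second is the elementary estimate $\tbinom{n}{r}<n^r/r!$ (with the harmless proviso $Y>0$, which holds in all uses here since $Y=1$), and~\eqref{eq:mcl2} follows by evaluating at the uniform weighting. The alternative smoothing sketch you add is not needed and is slightly loose in the degenerate case $\beta=0$, but since your primary argument is the direct citation of Maclaurin, nothing is missing.
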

Viewing $r$ as constant and $n$ as tending to infinity, we shall make frequent use of the following asymptotics. 
\begin{align}\label{eq:asymptt-1}
\frac{1}{n^r}\binom{n}{r}&=\frac{1}{r!}(1-\binom{r}{2}n^{-1})+O(n^{-2}); \nonumber \\
\frac{1}{(n-1)^r}\binom{n-1}{r}&=\frac{1}{r!}(1-\binom{r}{2}(n-1)^{-1})+O(n^{-2})=\frac{1}{r!}(1-\binom{r}{2}n^{-1})+O(n^{-2}).
\end{align}

A hypergraph $H$ is said to \emph{cover} a vertex pair $\{i,j\}$ if there exists an edge $A\in H$ with $\{i,j\}\subseteq A$.
$H$ is said to \emph{cover pairs} if it covers every pair $\{i,j\}\subseteq \bigcup_{A\in H}A$. Let $H-i$ be the $r$-graph obtain from $H$ by deleting vertex $i$ and the corresponding edges. 

\begin{prop}[\cite{FF}]\label{prop:maxcolex}
%The Lagrangian has the following properties.
Let $n\in \mathbb{N}$ and $H\subseteq [n]^{(r)}$.
\begin{itemize}
\item[(i)] Suppose that $H$ does not cover the pair $\{i,j\}$. Then $\lambda(H)\leq \max\{\lambda(H - i),\lambda(H - j)\}$.
%Then there exists an $r$-graph $H'$ with $|H'|\leq n-1$, $e(H')=e(H)$ and $\lambda(H)\leq \lambda(H')$. 
In particular, $\lambda(H)\leq \lambda([n-1]^{(r)})$.
\item[(ii)] Suppose that $m,t\in \mathbb{N}$ satisfy $\binom{t-1}{r}\leq m \leq \binom{t}{r}-\binom{t-2}{r-2}$. Then 
\begin{align}\label{eq:lagrcolex}
\lambda(H^{m,r})=\lambda([t-1]^{(r)})=\frac{1}{(t-1)^r}\binom{t-1}{r}.
\end{align}
\end{itemize} 
\end{prop}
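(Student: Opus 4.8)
The plan is to prove (i) by a local perturbation of an optimal weighting, and then to deduce (ii) from (i) by exhibiting an uncovered pair in the colex initial segment $H^{m,r}$.

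For (i), I would fix a weighting $\vec{y}$ attaining $\lambda(H)$. Since no edge of $H$ contains both $i$ and $j$, the polynomial $L(H,\vec{y})$ has no monomial divisible by $y_iy_j$, so we may write $L(H,\vec{y})=y_iP+y_jQ+R$, where $P$ does not involve $y_j$, $Q$ does not involve $y_i$, and $R$ involves neither. Keeping all coordinates other than $y_i,y_j$ fixed and letting $(y_i,y_j)$ vary along the segment $y_i+y_j=c$ (with $c$ the value in the original weighting), $L$ becomes an affine function of $y_i$; hence its maximum over this segment, which is at least $\lambda(H)$, is attained at an endpoint, that is, at a weighting with $y_i=0$ or with $y_j=0$. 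In the former case the weighting restricted to the remaining vertices is feasible for $H-i$ and gives the same value of $L$ (the deleted edges contributed $0$), so $\lambda(H)\le\lambda(H-i)$; the latter case is symmetric. This yields $\lambda(H)\le\max\{\lambda(H-i),\lambda(H-j)\}$. For the ``in particular'' clause I would invoke the elementary monotonicity $\lambda(H')\le\lambda(H'')$ whenever $H'\subseteq H''$ (extend an optimal weighting of $H'$ by zeros; the extra monomials are nonnegative); since $H-i$ and $H-j$ live on $n-1$ vertices, both their Lagrangians are at most $\lambda([n-1]^{(r)})$, and $\lambda$ is unchanged by relabelling vertices.

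For (ii), the lower bound is immediate: when $m\ge\binom{t-1}{r}$ the initial colex segment $H^{m,r}$ contains $[t-1]^{(r)}$ as a subsystem, so $\lambda(H^{m,r})\ge\lambda([t-1]^{(r)})$ by monotonicity. For the upper bound I would analyse the colex structure. Writing $k:=m-\binom{t-1}{r}$, the hypothesis on $m$ together with Pascal's identity gives $k\le\binom{t}{r}-\binom{t-2}{r-2}-\binom{t-1}{r}=\binom{t-1}{r-1}-\binom{t-2}{r-2}=\binom{t-2}{r-1}$. The edges of $H^{m,r}$ outside $[t-1]^{(r)}$ are precisely the first $k$ $r$-sets whose largest element is $t$, i.e.\ the sets $\{t\}\cup B$ with $B$ ranging over the first $k$ members of $[t-1]^{(r-1)}$ in colex order; since $k\le\binom{t-2}{r-1}$, every such $B$ lies in $[t-2]^{(r-1)}$, so no edge of $H^{m,r}$ contains both $t-1$ and $t$. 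Hence $H^{m,r}$ does not cover the pair $\{t-1,t\}$, and (i) gives $\lambda(H^{m,r})\le\max\{\lambda(H^{m,r}-(t-1)),\lambda(H^{m,r}-t)\}$. Finally $H^{m,r}-t\subseteq[t-1]^{(r)}$, whereas $H^{m,r}-(t-1)$ is a subsystem of $\{1,\dots,t-2,t\}^{(r)}$, a relabelled copy of $[t-1]^{(r)}$; by monotonicity and relabelling invariance of $\lambda$, both terms are at most $\lambda([t-1]^{(r)})$, which with the lower bound completes the proof.

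The only genuinely delicate point is the combinatorial bookkeeping in (ii): verifying that in the stated range $H^{m,r}$ consists of $[t-1]^{(r)}$ together with at most $\binom{t-2}{r-1}$ further sets, all through $t$ and all avoiding $t-1$. This hinges on the identity $\binom{t-1}{r-1}-\binom{t-2}{r-2}=\binom{t-2}{r-1}$ and on the fact that the first $\binom{t-2}{r-1}$ members of $[t-1]^{(r-1)}$ in colex order are exactly $[t-2]^{(r-1)}$. The perturbation argument in (i) is standard and self-contained.
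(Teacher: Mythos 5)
Your argument is correct and follows essentially the same route as the paper: with all weights except $y_i,y_j$ fixed, $L(H,\vec{y})$ becomes affine on the segment $y_i+y_j=c$ and is maximised at an endpoint, giving (i), and (ii) follows because $[t-1]^{(r)}\subseteq H^{m,r}$ while in the stated range $H^{m,r}$ does not cover the pair $\{t-1,t\}$ (your colex bookkeeping via $\binom{t-1}{r-1}-\binom{t-2}{r-2}=\binom{t-2}{r-1}$ is exactly the justification the paper leaves implicit). The only point you do not address is the closed-form value $\lambda([t-1]^{(r)})=\frac{1}{(t-1)^r}\binom{t-1}{r}$, which the paper takes from its Maclaurin lemma (Lemma~\ref{lem:mcl}).
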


\begin{proof}[Proof sketch]
(i) is obtained by considering all weights except $y_i$ and $y_j$ as fixed. This effectively turns $L(H,\vec{y})$ into a linear function in one variable, which is, ostensibly, maximised at an endpoint of its domain interval. (ii) follows from (i) since on the one hand $[t-1]^{(r)}\subseteq H^{m,r}$, and on the other hand for the above values of $m$ the graph $H^{m,r}$ does not cover the pair $\{t-1,t\}$.
%\footnote{It is also not hard to see that for $\binom{t-1}{r}\leq m < \binom{t}{r}-\binom{t-2}{r-2}$ the weighting $\vec{x}(H^{m,r})$ achieving $\lambda(H^{m,r})$ is unique: $x_1=\dots = x_{t-1}=1/(t-1), x_t=0$, whereas for $m = \binom{t}{r}-\binom{t-2}{r-2}$ one can freely move any weight between $x_{t-1}$ and $x_t$.} 
\end{proof}

A simple scaling gives the following fact about the Lagrangians of link hypergraphs.
\begin{lem} 
Suppose $H\subseteq [n]^{(r)}$ and $\vec{y}=(y_1,\dots,y_n)$ is a weighting with $y_j\geq 0$ for all $j\in [n]$ and with $\sum_{j=1}^n y_j=1$. Then for all $i\in[n]$ we have
\begin{align}\label{eq:scaling}
L(H_i,\vec{y})\leq (1-y_i)^{r-1}\lambda(H_i).
\end{align}
\end{lem}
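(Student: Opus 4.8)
The plan is to observe that $L(H_i,\vec y)$ is, up to a scalar, itself a value of the multilinear form $L(H_i,\cdot)$ on the standard simplex, and then invoke the definition of the Lagrangian. The key point is that every edge of the link $H_i$ is an $(r-1)$-set avoiding the vertex $i$, so $L(H_i,\vec y)$ depends only on the coordinates $y_j$ with $j\neq i$, whose total mass is $\sum_{j\neq i}y_j=1-y_i$.

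First I would dispose of the degenerate case $y_i=1$. Then $y_j=0$ for every $j\neq i$, and since each $A\in H_i$ has $r-1\geq 1$ vertices, all distinct from $i$, every monomial $\prod_{j\in A}y_j$ vanishes; hence $L(H_i,\vec y)=0=(1-y_i)^{r-1}\lambda(H_i)$ and the inequality holds. So from now on assume $y_i<1$. I would then define a renormalised weighting $\vec z=(z_j)_{j\in[n]\setminus\{i\}}$ by $z_j:=y_j/(1-y_i)$, noting $z_j\geq 0$ and $\sum_{j\neq i}z_j=(1-y_i)^{-1}\sum_{j\neq i}y_j=1$, so that $\vec z$ lies in the standard simplex on the vertex set of $H_i$. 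By the definition of the Lagrangian this gives $L(H_i,\vec z)\leq\lambda(H_i)$.

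Finally I would connect the two. Since $H_i\subseteq([n]\setminus\{i\})^{(r-1)}$, every edge $A\in H_i$ satisfies $\prod_{j\in A}y_j=\prod_{j\in A}(1-y_i)z_j=(1-y_i)^{r-1}\prod_{j\in A}z_j$; summing over $A\in H_i$ yields $L(H_i,\vec y)=(1-y_i)^{r-1}L(H_i,\vec z)\leq(1-y_i)^{r-1}\lambda(H_i)$, which is exactly \eqref{eq:scaling}. There is essentially no obstacle in this argument; the only step that requires a moment's care is the separate treatment of $y_i=1$, needed because the rescaling divides by $1-y_i$.
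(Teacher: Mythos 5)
Your proof is correct and follows the same approach as the paper's: handle $y_i=1$ separately, rescale by $z_j=y_j/(1-y_i)$ to land in the simplex, and use that each edge of $H_i$ has $r-1$ vertices none equal to $i$. The only cosmetic difference is that the paper sets $z_i=0$ explicitly rather than restricting to $[n]\setminus\{i\}$, which changes nothing.
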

\begin{proof}
If $y_i=1$, then this is self-evident. Otherwise, define the new weighting $z_j=(1-y_i)^{-1}y_j$, for all $j\neq i$ and $z_i=0$. We have $\sum_{j=1}^n z_j=1$ and thus 
$$\lambda(H_i)\geq L(H_i,\vec{z})=(1-y_i)^{-(r-1)}L(H_i,\vec{y}).$$
\end{proof}
%gives rise to an automorphism of $H$
Define $H_{i\setminus j}\colon = \{A\in H_i\setminus H_j:j\notin A\}$. In other words, $H_{i\setminus j}$ contains precisely all $r-1$-sets $A$ such that $A\cup \{i\}\in H$ but $A\cup \{j\}\notin H$. 
Note that having $H_{i\setminus j}=H_{j\setminus i}=\emptyset$ is equivalent to $H$ having an automorphism via interchanging $i$ and $j$. This implies the following straightforward fact. %; we omit the proof.
\begin{prop}\label{prop:symmetry}
Suppose that $H_{i\setminus j}=H_{j\setminus i}=\emptyset$. Then $L(H,\vec{y})\leq L(H,\vec{z})$, where $z_i=z_j=(y_i+y_j)/2$ and $z_\ell=y_\ell$ otherwise.
\end{prop}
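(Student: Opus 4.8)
The plan is to expand $L(H,\vec y)$ according to how each edge of $H$ meets the pair $\{i,j\}$, thereby isolating the dependence on $y_i$ and $y_j$. First I would record what the hypothesis buys us. Set $\mathcal{B}=\{B\in\mathbb{N}^{(r-2)}\colon B\cup\{i,j\}\in H\}$ and $\mathcal{A}=\{A\in\mathbb{N}^{(r-1)}\colon i,j\notin A,\ A\cup\{i\}\in H\}$. From $H_{i\setminus j}=\emptyset$ one gets that $A\cup\{i\}\in H$ with $j\notin A$ forces $A\cup\{j\}\in H$, and $H_{j\setminus i}=\emptyset$ gives the reverse implication; hence $\mathcal{A}$ is equally well described as $\{A\in\mathbb{N}^{(r-1)}\colon i,j\notin A,\ A\cup\{j\}\in H\}$. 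Equivalently, transposing $i$ and $j$ is an automorphism of $H$, and it maps the edges through $i$ but not $j$ bijectively onto the edges through $j$ but not $i$.

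Next I would split $H$ into four classes --- edges containing both of $i,j$, edges containing $i$ but not $j$, edges containing $j$ but not $i$, and edges containing neither --- and sum $\prod_{k\in A}y_k$ over each. Using the bijection from the previous step, the second and third classes together contribute $(y_i+y_j)f$, where $f=\sum_{A\in\mathcal{A}}\prod_{k\in A}y_k$, while the other two classes contribute $y_iy_jg$ and $c$ respectively, with $g=\sum_{B\in\mathcal{B}}\prod_{k\in B}y_k$ and $c=\sum_{A\in H,\,i,j\notin A}\prod_{k\in A}y_k$. This yields the identity
$$
L(H,\vec y)=c+(y_i+y_j)\,f+y_iy_j\,g .
$$
The point is that $c,f,g\ge 0$ and that none of them involves $y_i$ or $y_j$, so all three are unchanged when we pass from $\vec y$ to $\vec z$.

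Finally I would compare the two weightings. Since $z_i+z_j=y_i+y_j$ and $z_\ell=y_\ell$ for $\ell\neq i,j$, the terms $c$ and $(y_i+y_j)f$ take the same value at $\vec y$ and at $\vec z$, whereas AM--GM gives $z_iz_j=\bigl(\tfrac{y_i+y_j}{2}\bigr)^2\ge y_iy_j$. Since $g\ge 0$, it follows that $L(H,\vec z)-L(H,\vec y)=(z_iz_j-y_iy_j)\,g\ge 0$, which is exactly the asserted inequality.

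Honestly there is no serious obstacle in this argument: it is a direct computation. The only point that deserves a moment's care is the first step, namely checking that the two hypotheses $H_{i\setminus j}=H_{j\setminus i}=\emptyset$ together are precisely equivalent to the transposition of $i$ and $j$ being an automorphism, so that the ``through $i$ only'' and ``through $j$ only'' families genuinely coincide as families of $(r-1)$-sets avoiding $\{i,j\}$. Once that is in place, the decomposition and the appeal to AM--GM are immediate.
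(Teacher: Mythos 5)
Your proof is correct and is essentially the paper's argument, just written out in full: your $g$ is $L(H_{i,j},\vec y)$, and after cancelling the $c$ and $(y_i+y_j)f$ terms the comparison reduces to exactly the one-line identity $L(H,\vec z)-L(H,\vec y)=(z_iz_j-y_iy_j)L(H_{i,j},\vec y)\ge 0$ given in the paper.
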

\begin{proof}%[Proof sketch]
$$L(H,\vec{z})- L(H,\vec{y})=L(H_{i,j},\vec{y})(z_iz_j-y_iy_j)\geq 0.
$$
\end{proof}
From now on let $r\geq 4$ and suppose that $\binom{t-1}{r}\leq m \leq \binom{t}{r}-\binom{t-2}{r-2}$ for some $t\in \mathbb{N}$. Let $G$ be a graph with $e(G)=m$ which satisfies $\lambda(G)=\max \{\lambda(H)\colon H\subseteq \mathbb{N}^{(r)}, e(H)=m\}$ and let $\vec{x}$ be a weighting attaining the Lagrangian of $G$, that is $x_i\geq 0$ for all $i$, $\sum x_i=1$ and $L(G,\vec{x})=\lambda(G)$ (note that in general $G$ and $\vec{x}$ are not unique).
Following the conventional notation (see e.g.~\cite{Tb}), we can assume by symmetry that the entries of $\vec{x}$ are listed in descending order, that is $x_i\geq x_j$ for all $i<j$. We shall furthermore assume that, subject to the above conditions, $\vec{x}$ has the minimum possible number of non-zero entries, and let $T$ be this number. By the above, we have $x_1\geq\dots \geq x_T>0$ and $x_1+\dots+x_T=1$.
%Let $t$ be the number satisfying $\binom{t-1}{r}\leq m<\binom{t}{r}$ and assume additionally that .

Suppose that $G$ achieves a strictly larger Lagrangian than $H^{m,r}$. By~\eqref{eq:lagrcolex} we have 
\begin{align}\label{eq:basicassumption}
\lambda(G)=L(G,\vec{x}) >\frac{1}{(t-1)^r}\binom{t-1}{r},
\end{align}
which in turn implies $T\geq t$ (otherwise $\lambda(G)= \lambda(G\cap [t-1]^{(r)})\leq \lambda([t-1]^{(r)})$, a contradiction). Our goal is to show that under these assumptions we must have $m> \binom{t}{r}-\gamma_rt^{r-2}$ for some constant $\gamma_r>0$. Before we proceed, let us recall some well-known facts about the newly defined $r$-graph $G$ and its Lagrangian.
\begin{prop}\label{prop:commonsense}
$\bigcup_{A\in G} A = [T]$.
\end{prop}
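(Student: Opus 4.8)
The plan is to establish the two inclusions $[T]\subseteq V(G)$ and $V(G)\subseteq[T]$ separately, writing $V(G):=\bigcup_{A\in G}A$; both will follow from the extremality of the pair $(G,\vec{x})$.

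For $[T]\subseteq V(G)$, I would argue by contradiction: if some $i\in[T]$ is not covered by an edge of $G$, then $L(G,\vec{y})$ is independent of $y_i$. Form $\vec{x}'$ from $\vec{x}$ by setting $x'_i=0$ and $x'_1=x_1+x_i$, leaving all other coordinates unchanged. Since $x_1\geq\dots\geq x_T>0$ we have $x'_1>0$, so $\vec{x}'$ is again a weighting on the simplex; and since every coefficient of $L(G,\cdot)$ is nonnegative, no monomial of $L(G,\cdot)$ involves $y_i$, and we only raised the $x_1$-coordinate, we get $L(G,\vec{x}')\geq L(G,\vec{x})=\lambda(G)$, whence $L(G,\vec{x}')=\lambda(G)$ by the definition of the Lagrangian. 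But $\vec{x}'$ has strictly fewer nonzero entries than $\vec{x}$, contradicting the minimality of $T$.

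For $V(G)\subseteq[T]$, suppose instead that some vertex $v$ with $x_v=0$ lies in an edge $A\in G$; then $v\notin[T]$, so $A\not\subseteq[T]$, which means at most $m-1$ edges of $G$ are contained in $[T]^{(r)}$. As $m\leq\binom{t}{r}\leq\binom{T}{r}$ by the hypothesis on $m$ together with $T\geq t$, there is a set $B\in[T]^{(r)}\setminus G$. Then $G':=(G\setminus\{A\})\cup\{B\}$ is again an $m$-edge $r$-graph, and $L(G',\vec{x})=L(G,\vec{x})-\prod_{i\in A}x_i+\prod_{i\in B}x_i=\lambda(G)+\prod_{i\in B}x_i>\lambda(G)$, because $x_i>0$ for every $i\in B\subseteq[T]$. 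This contradicts the choice of $G$ as a maximiser of the Lagrangian over all $m$-edge $r$-graphs.

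No step here is a genuine obstacle; the only points needing a word of care are that vertex $1$ really carries positive weight (so that zeroing the weight on $i$ strictly shrinks the support) and that a missing edge inside $[T]$ exists, which is precisely where the upper bound on $m$ and the inequality $T\geq t$ enter.
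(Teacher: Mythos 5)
Your proof is correct, and your second inclusion is argued exactly as in the paper: an edge $A\not\subseteq[T]$ carries zero weight, so swapping it for a set $B\in[T]^{(r)}\setminus G$ (which exists because, as you note, at most $m-1$ edges of $G$ lie inside $[T]^{(r)}$ and $m\leq\binom{t}{r}\leq\binom{T}{r}$) strictly increases $L(\cdot,\vec{x})$, contradicting the maximality of $G$. For the first inclusion you take a mildly different route: the paper redistributes the unused weight $x_i$ evenly over the vertices of an arbitrary edge, which \emph{strictly} increases $L(G,\cdot)$ and so contradicts $L(G,\vec{x})=\lambda(G)$ directly, whereas you shift $x_i$ onto vertex $1$, producing another optimal weighting with only $T-1$ positive entries and contradicting the minimality of $T$. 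Both work; the paper's variant never uses the minimal-support convention (only optimality of $\vec{x}$), while yours avoids having to justify a strict increase, needing only that no monomial involves $y_i$. One tiny point to patch in your version: if the uncovered vertex happens to be $i=1$, the prescription ``set $x'_i=0$ and $x'_1=x_1+x_i$'' is ill-posed as written; simply move the weight to any other vertex $j\in[T]\setminus\{i\}$ (such a $j$ exists since $T\geq t>1$), and the argument goes through unchanged.
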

\begin{proof}
If there exists some $i\in [T]\setminus \bigcup_{A\in G}A$, i.e.~if there is a positive weight not used by any edge, then by re-distributing the weight $x_i$ evenly between the vertices of some edge $A\in G$, we obtain a new weighting $\vec{z}$ with $\sum z_i=1$ and $L(G,\vec{z})>L(G,\vec{x})=\lambda(G)$, a contradiction. Hence, $[T]\subseteq \bigcup_{A\in G}A$.

Conversely, suppose that there is some $A\in G$ with $A\not \subseteq [T]$, i.e.~that an edge of $G$ uses a zero weight. Then, since $T\geq t$ and $m<\binom{t}{r}\leq \binom{T}{r}$, there must be a set $B\in [T]^{(r)}\setminus G$. Now, the $r$-graph $G'=(G\setminus A)\cup B$ has $m$ edges and satisfies $\lambda(G')\geq L(G',\vec{x})>L(G,\vec{x})=\lambda(G)$, contradicting the assumption on $G$. Hence, $\bigcup_{A\in G}A\subseteq [T]$.  
\end{proof}
%Assuming the contrary, we aim to achieve a contradiction unless the above condition is satisfied. 
\begin{prop}[\cite{FF}]\label{prop:lagrbasicfact}
$G$ can be assumed to be left-compressed and to cover pairs.
\end{prop}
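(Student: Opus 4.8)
The plan is to follow the standard argument of Frankl and F\"uredi~\cite{FF}, establishing the two properties one after the other. In each case we replace the pair $(G,\vec{x})$ by another extremal pair having the same number of edges $m$, the same Lagrangian $\lambda(G)=\max\{\lambda(H)\colon H\subseteq\mathbb{N}^{(r)}, e(H)=m\}$, and the same number $T$ of non-zero weights; the point is that $T$ was defined as the minimum of this last quantity over \emph{all} extremal pairs, so it is a fixed constant that cannot change under such replacements.

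\emph{Left-compression.} For $i<j$ the compression $C_{ij}$ replaces each edge $A$ with $j\in A$, $i\notin A$ and $(A\setminus\{j\})\cup\{i\}\notin G$ by $(A\setminus\{j\})\cup\{i\}$, leaving the number of edges unchanged. I would compute $L(C_{ij}(G),\vec{x})-L(G,\vec{x})=\sum(x_i-x_j)\prod_{k\in A\setminus\{j\}}x_k\geq 0$, the sum ranging over the moved edges $A$, where the inequality holds because the entries of $\vec{x}$ are listed in descending order, so $x_i\geq x_j$. Hence $\lambda(C_{ij}(G))\geq L(C_{ij}(G),\vec{x})\geq L(G,\vec{x})=\lambda(G)$; since $C_{ij}(G)$ has $m$ edges and $\lambda(G)$ is already the maximum over $m$-edge $r$-graphs, all of these are equalities, so $C_{ij}(G)$ is extremal, $\vec{x}$ is an optimal weighting for it --- still descending, still with $T$ non-zero entries --- and by Proposition~\ref{prop:commonsense} we still have $\bigcup_{A\in C_{ij}(G)}A=[T]$ (in particular any compression with $j>T$ is trivial). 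The positive integer $\sum_{A\in G}\sum_{k\in A}k$ strictly decreases at each non-trivial compression, so after finitely many steps we reach a graph fixed by every $C_{ij}$, i.e.\ left-compressed, and we replace $G$ by it.

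\emph{Covering pairs.} Suppose for contradiction that the (now left-compressed) $G$ fails to cover some pair $\{a,b\}\subseteq[T]$. By Proposition~\ref{prop:maxcolex}(i), $\lambda(G)\leq\max\{\lambda(G-a),\lambda(G-b)\}$; relabelling the two vertices of the pair if needed, assume $\lambda(G)\leq\lambda(G-b)$. Since $b\in[T]$ lies in some edge, $e(G-b)\leq m-1$, and $G-b\neq\emptyset$: otherwise every edge of $G$ contains $b$, so the edge through $a$ would contain both $a$ and $b$ and cover $\{a,b\}$. Adjoin $m-e(G-b)\geq 1$ pairwise disjoint new edges to $G-b$ to get an $m$-edge graph $G''$; then $\lambda(G'')\geq\lambda(G-b)\geq\lambda(G)$, and since $\lambda(G)$ is the maximum and $e(G'')=m$ this forces $\lambda(G'')=\lambda(G-b)=\lambda(G)$. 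But any optimal weighting of the non-empty graph $G-b$ is supported on $\bigcup_{A\in G-b}A\subseteq[T]\setminus\{b\}$, so has at most $T-1$ non-zero entries, and extending it by zeros gives an optimal weighting of $G''$ with at most $T-1$ non-zero entries. After relabelling so that this weighting is non-increasing, we obtain an extremal pair with fewer than $T$ non-zero weights, contradicting the minimality of $T$. Hence $G$ covers every pair of $[T]$, and the final $G$ is simultaneously left-compressed and pair-covering.

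The argument is essentially bookkeeping, and there is no serious obstacle; the only points requiring care are to check that the constant $T$ and the hypotheses of Propositions~\ref{prop:commonsense} and~\ref{prop:maxcolex} really do survive each replacement of $G$ (the subtlety being that $\vec{x}$ is kept fixed while $G$ changes, so one must verify it remains an optimal, descending, $T$-supported weighting throughout), and to dispose of the degenerate possibility $G-b=\emptyset$ in the pair-covering step, as done above.
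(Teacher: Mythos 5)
Your proof is correct and follows the same outline as the paper's (brief) sketch: left-compressions do not decrease $L(G,\vec{x})$ and leave the extremal pair intact, and pair-covering follows from Proposition~\ref{prop:maxcolex}(i) together with the minimality of $T$. The only place you deviate slightly from the intended argument is the pair-covering step: rather than padding $G-b$ back up to $m$ edges and appealing to its own optimal weighting, the shorter route (and the one implicit in the proof sketch of Proposition~\ref{prop:maxcolex}(i)) is to keep $G$ fixed and shift all of the weight $x_a+x_b$ onto whichever of $a,b$ does not decrease $L(G,\cdot)$; since no edge contains both, $L$ is linear in $x_a$ along that segment, so this yields an optimal weighting of the same $G$ with at most $T-1$ non-zero entries, contradicting minimality of $T$ directly. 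Both routes are valid, yours just introduces the auxiliary graph $G''$ where none is needed.
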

\begin{proof}[Proof sketch]
If $G'$ is obtained from $G$ by a series of left-shifts, it is clear that $$\lambda(G)=L(G,\vec{x})\leq L(G',\vec{x})\leq \lambda(G').$$ Additionally, left-compressions cannot increase $T$. 
The fact that $G$ covers pairs follows by  Proposition~\ref{prop:maxcolex}(i) and the definition of $T$.
\end{proof}
\noindent
The next statement holds in more generality, but we shall mainly need it for $G$ and $\vec{x}$. 
\begin{prop}[\cite{FR},\cite{Tb}]\label{prop:derivatives}
%,\cite{TPZZ}
 Let $G$, $T$ and $\vec{x}$ be as defined above. Then
\begin{itemize}
\item[(i)] For all $1\leq i \leq T$ %with $x_i>0$ 
one has
\begin{align}\label{eq:lagridentity}
L(G_i,\vec{x}) = r\lambda(G).
\end{align}
%\leq (1-x_T)^{r-1}\lambda(G_T)
\item[(ii)] For all $1\leq i<j \leq T$ %with $x_i>0$ 
one has
\begin{align}\label{eq:lagridentity2}
(x_i-x_j)L(G_{i,j},\vec{x})=L(G_{i\setminus j},\vec{x}).
\end{align}
\end{itemize}
\end{prop}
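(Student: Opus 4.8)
The plan is to read (i) off as the first-order optimality condition for the constrained maximum $\lambda(G)=L(G,\vec{x})$, and then to obtain (ii) from (i) by a careful expansion of the link polynomial, exploiting that $G$ is left-compressed. The one elementary identity I would record at the outset is that, for any $i$, writing $L(G,\vec{y})=y_iL(G_i,\vec{y})+R_i(\vec{y})$ with $R_i$ collecting the edges avoiding $i$ and noting $L(G_i,\vec{y})$ does not involve $y_i$, we have $\partial L(G,\vec{y})/\partial y_i=L(G_i,\vec{y})$.

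For (i), since $x_1,\dots,x_T>0$, for any $i,j\in[T]$ the perturbed weighting $\vec{x}+\varepsilon(\vec{e}_i-\vec{e}_j)$ lies in the simplex for all sufficiently small $|\varepsilon|$, so $\varepsilon\mapsto L(G,\vec{x}+\varepsilon(\vec{e}_i-\vec{e}_j))$ attains an interior maximum at $\varepsilon=0$; differentiating and using the identity above gives $L(G_i,\vec{x})=L(G_j,\vec{x})$ for all $i,j\in[T]$. To pin down this common value I would invoke Euler's identity for the degree-$r$ homogeneous polynomial $L(G,\cdot)$ — equivalently, sum $\sum_i y_i\,\partial_{y_i}\!\prod_{k\in A}y_k=r\prod_{k\in A}y_k$ over $A\in G$ — to get $\sum_{i=1}^{T}x_iL(G_i,\vec{x})=rL(G,\vec{x})=r\lambda(G)$; since $\sum_{i=1}^{T}x_i=1$, the common value equals $r\lambda(G)$, which is (i). Note this argument only used that $\vec{x}$ is a local maximum in the relative interior of the face it spans, which is exactly why the statement holds in greater generality.

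For (ii), fix $i<j$ in $[T]$ and split the $(r-1)$-sets counted by $L(G_i,\vec{y})$ according to whether they contain $j$:
$$L(G_i,\vec{y})=y_jL(G_{i,j},\vec{y})+L(G_{i\setminus j},\vec{y})+P(\vec{y}),$$
where $P(\vec{y})$ runs over the $(r-1)$-sets $A$ with $i,j\notin A$ and both $A\cup\{i\},A\cup\{j\}\in G$. The term $P$ is symmetric under swapping $i$ and $j$, so the analogous expansion of $L(G_j,\vec{y})$ carries the same $P$, and subtracting gives
$$L(G_i,\vec{y})-L(G_j,\vec{y})=(y_j-y_i)L(G_{i,j},\vec{y})+L(G_{i\setminus j},\vec{y})-L(G_{j\setminus i},\vec{y}).$$
Evaluating at $\vec{x}$, the left-hand side vanishes by (i). Finally, since $G$ is left-compressed (Proposition~\ref{prop:lagrbasicfact}) and $i<j$, every $(r-1)$-set $A$ with $j\notin A$ and $A\cup\{j\}\in G$ also satisfies $A\cup\{i\}\in G$, so $G_{j\setminus i}=\emptyset$; rearranging yields $(x_i-x_j)L(G_{i,j},\vec{x})=L(G_{i\setminus j},\vec{x})$. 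Neither step is a genuine obstacle: the only points needing care are the bookkeeping in the expansion and the observation that left-compressedness is precisely what deletes the otherwise-present term $L(G_{j\setminus i},\vec{x})$.
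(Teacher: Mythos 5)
Your proof is correct and follows essentially the same route as the paper's own (sketched) argument: identify $L(G_i,\vec{x})$ as the partial derivative, deduce $L(G_i,\vec{x})=L(G_j,\vec{x})$ from first-order optimality (Lagrange multipliers, phrased by you as a simplex-interior perturbation), pin down the common value via Euler's identity for the degree-$r$ homogeneous polynomial, and for (ii) expand $L(G_i,\vec{x})-L(G_j,\vec{x})$ and use left-compressedness to force $G_{j\setminus i}=\emptyset$. You have simply filled in the bookkeeping that the paper leaves implicit.
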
 
\begin{proof}[Proof sketch]
Notice that $L(G_i,\vec{x})$ is the partial derivative of $L(G,\vec{x})$ with respect to $x_i$. By considering Lagrange multipliers, one obtains $L(G_i,\vec{x})=L(G_j,\vec{x})$ for all $i\neq j$. Hence, Proposition~\ref{prop:commonsense} yields
$$L(G,\vec{x})=\frac{1}{r}\sum_{j=1}^T L(G_j,\vec{x})x_i=\frac{1}{r}L(G_i,\vec{x})\sum_{j=1}^T x_j=\frac{1}{r}L(G_i,\vec{x}),$$
proving (1). To see that (2) also holds, note again that $L(G_i,\vec{x})=L(G_j,\vec{x})$ and use the fact that $G$ is left-compressed. 
\end{proof}

In order to prove Theorem~\ref{thm:main} we apply induction on $r$, assuming Corollary~\ref{cor:FFapprox} for $r-1$ as the induction hypothesis. Since Theorem~\ref{thm:main} is concerned with large values of $m$, we will assume $t$ to be greater than any given number (which may also depend on $r$), whenever we need it. We do not attempt to optimise $\gamma_r$.

As our induction base we take Corollary~\ref{cor:FFapprox} for $r=3$, which is known to hold by Talbot's theorem~\cite{Tb}. Note though, that our proof does not crucially rely on~\cite{Tb}, as, alternatively, we could start at $r=3$, taking the trivial $r=2$ case as the induction base; this would also give a new proof of a slightly weaker form of Talbot's theorem for $r=3$. 

The rest of the paper is organised as follows. In Section~\ref{sec:coarse} we establish first upper bounds on $T$ and $x_1$. With this information we show in Section~\ref{sec:tails} that if $T$ is greater than $t$ by some additive term, then $x_T$ is less than $1/t$ by a similar multiplicative term. This implies that, for a certain index $q<t$, $x_q$ is significantly smaller than $1/t$. This fact will, in turn, be applied in Section~\ref{sec:x1}, where we prove that $T-t\leq C_0$ for a constant $C_0(r)$, which allows in Section~\ref{sec:smallsup}, using a refinement of an argument from~\cite{TPZZ}, to bound $\binom{t}{r}-m$, concluding the proof of Theorem~\ref{thm:main}. In Section~\ref{sec:refinement} we apply a version of the argument in Section~\ref{sec:smallsup} in order to prove Theorem~\ref{thm:refine}. In the final section we discuss possible ways of extending our results.

\section{Some coarse bounds}\label{sec:coarse}
%Suppose that $\binom{t-1}{r}\leq m \leq \binom{t}{r}-\binom{t-2}{r-2}$ and that $\lambda(G)>\lambda(H^{m,r})$. Our aim is to show that then $m\geq \binom{t}{r}-%\gamma_rt^{r-2}$. 
In this section we prove some, rather crude, first bounds on $T$ and $x_1$. They will be required for establishing tighter bounds later on. 
\begin{lem}\label{lem:10t}
$T<10t$.
\end{lem}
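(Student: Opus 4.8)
The plan is to combine the lower bound on $\lambda(G)$ from \eqref{eq:basicassumption} with the fact that $G\subseteq [T]^{(r)}$ has only $m<\binom{t}{r}$ edges, using the Maclaurin-type inequality \eqref{eq:mcl}. The point is that spreading weight over too many vertices forces the Lagrangian to be small relative to the maximum possible, because the edge set is too sparse inside $[T]^{(r)}$. First I would observe that, since $G$ is supported on $[T]$ (Proposition~\ref{prop:commonsense}) and $e(G)=m$, we have a crude bound on $\lambda(G)$ in terms of how densely the $m$ edges can sit among the $T$ vertices carrying the weight. Concretely, $L(G,\vec x)\le L([T]^{(r)},\vec x)$ would be useless on its own; instead I would use that $G$ is left-compressed (Proposition~\ref{prop:lagrbasicfact}) together with Proposition~\ref{prop:KK}: writing $m=\binom{y}{r}$ with $y\le t$, the link $G_T$ satisfies $e(G_T)\le \frac{r}{T}e(G)$, hence by \eqref{eq:lagridentity},
\[
r\lambda(G)=L(G_T,\vec x)\le \lambda(G_T)\le \lambda\bigl([e(G_T)]^{(r-1)}\bigr)\quad\text{(roughly)},
\]
so $\lambda(G)$ decays as $T$ grows. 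Comparing this with the lower bound $\lambda(G)>\frac{1}{(t-1)^r}\binom{t-1}{r}=\frac{1}{r!}(1-\binom r2 t^{-1})+O(t^{-2})$ from \eqref{eq:basicassumption} and \eqref{eq:asymptt-1} should already force $T=O(t)$; to nail down the explicit constant $10$, I would instead argue slightly more directly, as follows.

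A cleaner route: since $x_1\ge\cdots\ge x_T>0$ and $\sum x_i=1$, we have $x_T\le 1/T$. Consider the link $G_T$; by \eqref{eq:lagridentity}, $r\lambda(G)=L(G_T,\vec x)$, and by \eqref{eq:scaling}, $L(G_T,\vec x)\le (1-x_T)^{r-1}\lambda(G_T)\le \lambda(G_T)$. Since $G_T$ is an $(r-1)$-graph with $e(G_T)\le \frac{r}{T}m < \frac{r}{T}\binom{t}{r}$ edges, Lemma~\ref{lem:mcl} (applied to the initial colex segment, or directly to $[n]^{(r-1)}$ with $\binom{n}{r-1}\ge e(G_T)$) gives $\lambda(G_T)<\frac{1}{(r-1)!}$; but more usefully, if $e(G_T)\le \binom{s}{r-1}$ then $\lambda(G_T)\le\frac{1}{s^{r-1}}\binom{s}{r-1}<\frac{1}{(r-1)!}$, and one can extract a quantitative gain when $s$ is forced to be small. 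Then $r\lambda(G)<\frac{1}{(r-1)!}$, i.e. $\lambda(G)<\frac{1}{r!}$, which is true but not yet enough — so I would sharpen by noting that $e(G_T)\le \frac{r}{T}m$ with $m<\binom{t}{r}\sim \frac{t^r}{r!}$, whence $e(G_T)<\frac{t^r}{T(r-1)!}$; choosing $n$ with $\binom{n}{r-1}\approx e(G_T)$ gives $n<\bigl(\frac{r\,t^r}{T}\bigr)^{1/(r-1)}(1+o(1))$, and $\lambda(G_T)\le\frac{1}{(r-1)!}\bigl(1-\binom{r-1}{2}n^{-1}+O(n^{-2})\bigr)$. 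Plugging into $r\lambda(G)\le\lambda(G_T)$ and comparing with $\lambda(G)>\frac1{r!}(1-\binom r2 t^{-1}+O(t^{-2}))$ yields $\binom{r-1}{2}n^{-1}\le\binom r2 t^{-1}+O(t^{-2})$, i.e. $n\ge (1-o(1))\frac{\binom{r-1}{2}}{\binom r2}t$. Combined with $n\le\bigl(\tfrac{r t^r}{T}\bigr)^{1/(r-1)}$ this gives $T\le (1+o(1))\, r\,t\,\bigl(\binom r2/\binom{r-1}{2}\bigr)^{r-1}$, which is a constant multiple of $t$; for $r\ge 4$ one checks this constant is below $10$ once $t$ is large, and for the finitely many small $t$ the bound $T<10t$ can be verified directly (or absorbed by enlarging the threshold on $t$, which we are free to do).

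The main obstacle is bookkeeping the asymptotic constants carefully enough to land below the clean value $10$: the chain of inequalities $r\lambda(G)=L(G_T,\vec x)\le\lambda(G_T)\le\frac{1}{n^{r-1}}\binom{n}{r-1}$ with $\binom{n}{r-1}\ge e(G_T)\le\frac rT m$ loses a little at each step (from dropping the $(1-x_T)^{r-1}$ factor, from $m<\binom tr$ rather than an equality, and from rounding $n$), and one must confirm these losses do not push the final constant above $10$. Since the lemma only claims the coarse bound $T<10t$, there is ample slack, so I expect the estimate to go through comfortably; the role of this lemma is merely to bootstrap the sharper bounds of Sections~\ref{sec:tails}--\ref{sec:x1}.
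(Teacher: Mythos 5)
Your overall strategy matches the paper's: bound $e(G_T)$ from above via Proposition~\ref{prop:KK}, use the resulting upper bound on $\lambda(G_T)$ together with the lower bound $\lambda(G_T)\geq r\lambda(G) > \frac{r}{(t-1)^r}\binom{t-1}{r}$ coming from~\eqref{eq:basicassumption} and~\eqref{eq:lagridentity}, and from the resulting asymptotic sandwich conclude $T=O(t)$. So the route is the same as in the paper.

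However, there is a real gap in how you justify the crucial step. You write that ``if $e(G_T)\le \binom{s}{r-1}$ then $\lambda(G_T)\le\frac{1}{s^{r-1}}\binom{s}{r-1}$'', and you seem to attribute this to Lemma~\ref{lem:mcl}. Maclaurin's inequality does not give this: Lemma~\ref{lem:mcl} only computes $\lambda([n]^{(r-1)})$ for the clique, and it gives no information about the Lagrangian of an \emph{arbitrary} $(r-1)$-graph with at most $\binom{s}{r-1}$ edges. The statement that an $(r-1)$-graph with $\binom{s}{r-1}$ edges has Lagrangian at most $\frac{1}{s^{r-1}}\binom{s}{r-1}$ is precisely the Frankl--F\"uredi conjecture one uniformity down, i.e.\ Corollary~\ref{cor:FFapprox} for $r-1$, which is the induction hypothesis. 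This is not a bookkeeping detail: it is the load-bearing ingredient that makes the whole inductive scheme of the paper work, and without explicitly invoking it the estimate $\lambda(G_T)\le\frac{1}{(r-1)!}\bigl(1-\binom{r-1}{2}n^{-1}+O(n^{-2})\bigr)$ is unproved. Moreover, the induction hypothesis only applies once the relevant parameter exceeds $t_{r-1}$, so one must first rule out $s$ being too small; the paper does this with the auxiliary quantities $s_1=\max\{s,t_{r-1}\}$ and $s_2=\max\{s,t/10\}$ and the contradiction at~\eqref{eq:srange}, a step your sketch skips over. Once you replace the appeal to Maclaurin with an explicit appeal to the induction hypothesis and add the small-$s$ dichotomy, your argument aligns with the paper's proof.
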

\begin{proof}
We have
\begin{align*}
\frac{1}{(t-1)^r}\binom{t-1}{r}\stackrel{\eqref{eq:basicassumption}}{<} L(G,\vec{x})\stackrel{\eqref{eq:lagridentity}}{=}\frac{1}{r}L(G_T,\vec{x})\stackrel{\eqref{eq:scaling}}{\leq} \frac{1}{r}(1-x_T)^{r-1}\lambda(G_T)\leq \frac{\lambda(G_T)}{r}.
\end{align*}
So,
\begin{align}\label{eq:sec3asympt}
\lambda(G_T)\geq \frac{r}{(t-1)^r}\binom{t-1}{r}.
\end{align}
%which implies
%\begin{align}\label{eq:sec3asympt}\lambda(G_T)&\geq \frac{1}{(r-1)!}\cdot \left(1-\frac{1}{t-1}\right)\cdots\left(1-\frac{r-1}{t-1}\right)=\frac{1}{(r-1)!}\cdot \left(1-\binom{r}{2}\frac{1}{t-1}+O\left({t^{-2}}\right)\right)\nonumber \\
%&=\frac{1}{(r-1)!}\cdot \left(1-\binom{r}{2}\frac{1}{t}+O\left({t^{-2}}\right)\right)
%\end{align}
Let $s\in \mathbb{N}$ be such that
\begin{align}\label{eq:eGT}
\binom{s-1}{r-1}< e(G_T) \leq \binom{s}{r-1}.
\end{align}
Since, by Proposition~\ref{prop:lagrbasicfact}, $G$ is left-compressed, Proposition~\ref{prop:KK}(ii) implies 
\begin{align}\label{eq:GTbasic}
e(G_T)\leq\frac{r}{T}m<\frac{r}{T}\binom{t}{r}\leq \frac{r}{t}\binom{t}{r}=\binom{t-1}{r-1},
\end{align}
which means $s<t$.
We claim that $s$ cannot be too small either, or, in other words, that $e(G_T)$ is `reasonably large'. 
\begin{claim}\label{cl:sislarge}
$$s\geq (1+o(1))\frac{r-2}{r}t.$$
\end{claim}
\begin{proof}
By the induction hypothesis and monotonicity, we have
$$\frac{r}{(t-1)^r}\binom{t-1}{r}\stackrel{~\eqref{eq:sec3asympt}}{\leq} \lambda(G_T)\leq \frac{1}{{s_1}^{r-1}}\binom{{s_1}}{r-1},
$$ 
%=\frac{1}{(r-1)!}\cdot \left(1-\frac{1}{{s_1}}\right)\cdots\left(1-\frac{r-2}{{s_1}}\right),
where ${s_1}=\max\{s,t_{r-1}\}$ and $t_{r-1}$ is as in Corollary~\ref{cor:FFapprox}. Since we can assume that $t>10t_{r-1}$, so that $t_{r-1}<t/10$, it follows that
\begin{align}\label{eq:s2}
\frac{r}{(t-1)^r}\binom{t-1}{r}\leq \frac{1}{s_2^{r-1}}\binom{s_2}{r-1},
\end{align}
where 
$s_2=\max\{s,t/10\}$. 
Suppose now that $s<t/10$. Then $s_2=t/10$ and we get
\begin{align}\label{eq:srange}
\frac{1}{(r-1)!}\cdot (1-\binom{r}{2}\frac{1}{t})+O({t^{-2}})&\stackrel{\eqref{eq:asymptt-1}}{=}\frac{r}{(t-1)^r}\binom{t-1}{r}\stackrel{\eqref{eq:s2}}{\leq} \frac{1}{(t/10)^{r-1}}\binom{t/10}{r-1}\nonumber \\
&\stackrel{\eqref{eq:asymptt-1}}{=}\frac{1}{(r-1)!}\cdot (1-\binom{r-1}{2}\frac{10}{t})+O({t^{-2}}),
\end{align}
which results in a contradiction, as $10\binom{r-1}{2}>\binom{r}{2}$.
Thus we must have $s\geq t/10$, so that $s_2=s$ and
$$\frac{r}{(t-1)^r}\binom{t-1}{r}\leq \frac{1}{{s}^{r-1}}\binom{{s}}{r-1}.
$$ 
Applying~\eqref{eq:asymptt-1} and the just established fact that $t/10 \leq s \leq t$, i.e. $s=\Theta(t)$, similarly to~\eqref{eq:srange} we obtain
%and by the same calculation as 
$$1-\binom{r-1}{2}\frac{1}{s}\geq 1-\binom{r}{2}\frac{1}{t}+O\left({t^{-2}}\right).
$$
Equivalently,
$$\binom{r}{2}s\geq \binom{r-1}{2}t+O(1)=(1+o(1)) \binom{r-1}{2}t.
$$
Thus,
%$${s} = \frac{\binom{r-1}{2}}{\binom{r}{2}}t=\frac{r-2}{r}t.
%$$
%Since the right hand side of~\eqref{eq:ps} is an increasing function of $s$, 
$${s}\geq (1+o(1))\frac{\binom{r-1}{2}}{\binom{r}{2}}t=(1+o(1))\frac{r-2}{r}t.$$ 
\end{proof}
%In particular for $t\gg t_{r-1}$ we must have $\hat{s}=s$ and $s\geq (1+o(1))\frac{r-2}{r}t$.
\noindent
%Notice now that, since by Proposition~\ref{prop:lagrbasicfact}, $G$ is left-compressed, Proposition~\ref{prop:KK}(ii) implies 
%$$e(G_T)\leq \frac{r}{T} m\leq \frac{r}{T}\binom{t}{r}.$$
Notice now that
$$\frac{r}{T}\binom{t}{r}\stackrel{\eqref{eq:GTbasic}}{\geq} e(G_T)\stackrel{\eqref{eq:eGT}}{\geq} \binom{s-1}{r-1} =\frac{r}{s}\binom{s}{r}.
$$
Thus, using Claim~\ref{cl:sislarge}, we obtain
\begin{align*}
T\leq \frac{s\binom{t}{r}}{\binom{s}{r}}&= (1+o(1)) s\cdot \left(\frac{t}{s}\right)^{r}= (1+o(1)) t\cdot \left(\frac{t}{s}\right)^{r-1}\leq(1+o(1))t\left(\frac{r}{r-2}\right)^{r-1} \\
&=(1+o(1))t\cdot \left(1-\frac{2}{r}\right)^{-r+1} <10t,
\end{align*}
where the last inequality is due to the fact that $e^2<8$, with the latter replaced by $10$ to account for small values of $r$. This completes the proof of Lemma~\ref{lem:10t}.
\end{proof}
\noindent
%Note that by the above we also obtain 
%\begin{align}\label{eq:GTlowerbound}
%e(G_T)&\stackrel{\eqref{eq:eGT}}{=}(1+o(1))\frac{s^{r-1}}{(r-1)!}\geq (1+o(1))\left(\frac{r-2}{r}\right)^{r-1}\frac{t^{r-1}}{(r-1)!} \nonumber\\ 
%&=(1+o(1))\left(\frac{r-2}{r}\right)^{r-1}\binom{t-1}{r-1}\geq \frac{1}{10}\binom{t-1}{r-1}.
%\end{align}
Next, we give an upper bound on $x_1$.
\begin{lem}\label{lem:coarsex1} 
%\frac {1}{t+1} \leq
$x_1\leq {r}/{t}$.
\end{lem}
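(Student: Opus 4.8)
\textbf{Proof proposal for Lemma~\ref{lem:coarsex1}.}

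The plan is to bound $x_1$ by isolating the contribution of vertex $1$ to the Lagrangian. The starting point is the identity~\eqref{eq:lagridentity}, which gives $L(G_1,\vec{x})=r\lambda(G)$. On one hand, we have the scaling bound~\eqref{eq:scaling}, namely $L(G_1,\vec{x})\leq (1-x_1)^{r-1}\lambda(G_1)$, so that
\begin{align*}
r\lambda(G)\leq (1-x_1)^{r-1}\lambda(G_1).
\end{align*}
On the other hand, $\lambda(G)$ is large by assumption~\eqref{eq:basicassumption}, while $\lambda(G_1)$ is controlled from above: since $G$ is left-compressed (Proposition~\ref{prop:lagrbasicfact}) and $e(G)=m<\binom{t}{r}$, Proposition~\ref{prop:KK}(ii) gives $e(G_1)\leq \frac{r}{1}m<r\binom{t}{r}$, and then by Lemma~\ref{lem:mcl} applied to the $(r-1)$-uniform hypergraph $G_1$ we get a crude bound $\lambda(G_1)\leq\lambda([T]^{(r-1)})<\frac{1}{(r-1)!}$ (or, using the induction hypothesis as in Claim~\ref{cl:sislarge}, something much closer to $\frac{1}{(r-1)!}$, but the trivial bound should already suffice for the factor of $r$ we are allowed here).

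The key step is to combine these: substituting~\eqref{eq:basicassumption} and the bound on $\lambda(G_1)$ into the displayed inequality yields
\begin{align*}
(1-x_1)^{r-1}\geq \frac{r\lambda(G)}{\lambda(G_1)}> \frac{r\,\frac{1}{(t-1)^r}\binom{t-1}{r}}{\frac{1}{(r-1)!}}=r\cdot(r-1)!\cdot\frac{1}{(t-1)^r}\binom{t-1}{r}=\frac{r!}{(t-1)^r}\binom{t-1}{r}.
\end{align*}
Using the asymptotics~\eqref{eq:asymptt-1}, the right-hand side is $1-\binom{r}{2}/t+O(t^{-2})=1-O(1/t)$. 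Taking $(r-1)$-th roots, $1-x_1\geq (1-O(1/t))^{1/(r-1)}=1-O(1/t)$, hence $x_1=O(1/t)$. To get the explicit constant $r/t$ one tracks the constants: $(1-x_1)^{r-1}\geq 1-\binom{r}{2}/t+O(t^{-2})$ implies, via the elementary inequality $(1-a)^{r-1}\leq 1-(r-1)a$ for $a\in[0,1]$ valid since $r\geq 2$, that $(r-1)x_1\leq \binom{r}{2}/t+O(t^{-2})=\frac{r(r-1)}{2t}+O(t^{-2})$, so $x_1\leq \frac{r}{2t}+O(t^{-2})\leq \frac{r}{t}$ for $t$ large. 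A cleaner route avoids the induction hypothesis altogether: since $\lambda(G_1)\leq \frac{1}{r!}\binom{T}{r-1}/\binom{T}{r-1}$-type bounds and in fact $\lambda(G_1)\le \lambda([s']^{(r-1)})$ for the appropriate $s'$; but the crude bound $\lambda(G_1)<1/(r-1)!$ is the simplest and, being weaker by only a $(1-O(1/t))$ factor, still gives $1-x_1\geq 1-O(1/t)$.

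The main obstacle I anticipate is purely bookkeeping: one must be careful that the bound on $\lambda(G_1)$ used is strong enough that the ratio $r\lambda(G)/\lambda(G_1)$ is genuinely $1-O(1/t)$ and not bounded away from $1$ from below by a constant $<1$ — this is exactly why $\lambda(G)$ must be compared against $\frac{1}{(t-1)^r}\binom{t-1}{r}$ (which is $\frac{1}{r!}(1-O(1/t))$) rather than against the weaker bound $\lambda(G)<\frac{1}{r!}$. Since $r\cdot\frac{1}{(t-1)^r}\binom{t-1}{r}$ and $\frac{1}{(r-1)!}=r\cdot\frac{1}{r!}$ differ only multiplicatively by $1-O(1/t)$, the ratio is indeed $1-O(1/t)$ and the argument closes. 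No genuinely hard step is expected; this lemma is a warm-up, and the stated bound $r/t$ is deliberately generous (the argument in fact yields roughly $r/(2t)$).
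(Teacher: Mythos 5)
Your approach is essentially identical to the paper's: both start from the chain $L(G,\vec{x})=\tfrac{1}{r}L(G_1,\vec{x})\leq \tfrac{1}{r}(1-x_1)^{r-1}\lambda(G_1)<\tfrac{(1-x_1)^{r-1}}{r!}$ using \eqref{eq:lagridentity}, \eqref{eq:scaling} and \eqref{eq:mcl2}, then compare against the lower bound \eqref{eq:basicassumption} to deduce $(1-x_1)^{r-1}>1-\binom{r}{2}t^{-1}+O(t^{-2})$, and finally extract the bound on $x_1$.

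One step in your bookkeeping is wrong as stated: you invoke the inequality $(1-a)^{r-1}\leq 1-(r-1)a$ on $[0,1]$, but Bernoulli's inequality goes the opposite way, $(1-a)^{r-1}\geq 1-(r-1)a$, so this does not by itself yield $(r-1)x_1\leq \binom{r}{2}t^{-1}+O(t^{-2})$. The fix is easy: having already established $x_1=O(1/t)$ in the previous sentence, expand $(1-x_1)^{r-1}=1-(r-1)x_1+O(x_1^2)=1-(r-1)x_1+O(t^{-2})$ (the quadratic error being absorbed into $O(t^{-2})$), which gives the same conclusion. Alternatively, the paper simply observes that $(1-r/t)^{r-1}=1-2\binom{r}{2}t^{-1}+O(t^{-2})$ is strictly smaller than $1-\binom{r}{2}t^{-1}+O(t^{-2})$ and uses monotonicity of $(1-x_1)^{r-1}$ in $x_1$ to conclude $x_1\leq r/t$ without any one-sided inequality on the $(r-1)$-th power. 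Either patch closes the gap; the surrounding digression about $e(G_1)$, Proposition~\ref{prop:KK}(ii), and the induction hypothesis is unnecessary, as you correctly note that the crude bound $\lambda(G_1)<1/(r-1)!$ suffices.
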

\begin{proof}
%The first inequality is a direct consequence of Lemma~\ref{lem:10t}: 
%$$x_1\geq \frac{1}{T}\sum_{i=1}^T x_i \geq \frac{1}{10t}.
%$$
%Notice that $\{1,\dots,r\}$ is the heaviest edge in $G$, therefore
%$$x_1^r\geq x_1\cdots x_r\geq \frac{\lambda(G)}{e(G)}>\frac{\frac{1}{(t-1)^r}\binom{t-1}{r}}{\binom{t}{r}}=\frac{1}{(t-1)^r}\frac{t-r }{t}\geq \frac{1}{(t+1)^r},
%$$
%where the last inequality holds because $(1+\frac{2}{t-1})^r\geq 1+\frac{2r}{t-1}\geq 1+\frac{r}{t-r}=\frac{t}{t-r}$.
%For the upper bound on $x_1$, 
Observe that 
\begin{align}\label{eq:lagrGT}
L(G,\vec{x})\stackrel{\eqref{eq:lagridentity}}{=}\frac{1}{r}L(G_1,\vec{x}) \stackrel{\eqref{eq:scaling}}{\leq} \frac{1}{r}(1-x_1)^{r-1}\lambda(G_1)\stackrel{\eqref{eq:mcl2}}{<} \frac{(1-x_1)^{r-1}}{r!}.  
\end{align}
%By Proposition~\ref{prop:maxcolex} we have 
%$
%\lambda(H^{m,r})=\binom{t-1}{r}\frac{1}{(t-1)^r} 
%$.
Hence, by~\eqref{eq:basicassumption}, we must have  
$$
\frac{1}{(t-1)^r}\binom{t-1}{r}< \frac{(1-x_1)^{r-1}}{r!},
$$
%$$
%\binom{t-1}{r}\frac{1}{(t-1)^r}< \frac{1}{r}\binom{t-1}{r-1}\left(\frac{1-x_1}{t-1}\right)^{r-1},
%$$
which, due to~\eqref{eq:asymptt-1}, means
$$(1-x_1)^{r-1}> 1-\binom{r}{2}\frac{1}{t} +O(t^{-2}).  
$$
Since $(1-x_1)^{r-1}$ is a decreasing function of $x_1$ and $(1-r/t)^{r-1}=1-2\binom{r}{2}/t +O(t^{-2})<1-\binom{r}{2}/t +O(t^{-2})$, we must have $x_1\leq {r}/{t}$. %(in fact, $x_1\leq (1+o(1))(r/(2t)$).
\end{proof}

\section{Bounding the tails}\label{sec:tails}

With the above information we aim to establish some upper bounds on $x_i$ for large values of $i$. 
%show that in fact $T\leq t+C$ for some constant $C=C(r)$. 
Put $\Delta:=T-t$, so that by Lemma~\ref{lem:10t} we have $\Delta\leq 9t$. For technical reasons we will assume here that $\Delta\geq 1$; the case $\Delta=0$ will be dealt with in Section~\ref{sec:smallsup}. 
%if $\Delta<0$ then we are done immediately, and 
% and we will show that $\Delta$ is bounded by a function of $r$. 
%In order to achieve this, we shall first bound small entries from above. 
\begin{lem}\label{lem:Tbound}
If $\Delta\geq 1$ then 
\begin{align}\label{eq:xTbound}
x_T\leq \frac{10}{\Delta^{1/(r-1)} t}.
\end{align}
%where $\Gamma=\frac{1}{10}\Delta^{1/(r-1)}$.
\end{lem}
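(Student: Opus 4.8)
The idea is to exploit the fact that $G$ covers pairs, so that the link $G_T$ is fairly dense, together with the identity $L(G_T,\vec x)=r\lambda(G)$ from Proposition~\ref{prop:derivatives}(i) and the scaling bound~\eqref{eq:scaling}. First I would record what the coarse bounds already give: since $G$ is left-compressed with $e(G)=m<\binom{t}{r}\leq\binom{T}{r}$, Proposition~\ref{prop:KK}(ii) bounds $e(G_T)\leq \tfrac{r}{T}m$, while the pair-covering property forces $G_T$ to contain, for each vertex $j\in[T-1]$, at least one edge through $j$ — actually more is true once we use left-compression, but the crude consequence is that $G_T$ is supported on (close to) all of $[T-1]$. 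The serious input is an \emph{upper} bound on $\lambda(G_T)$ of the form $\lambda(G_T)\leq \tfrac1{r!}(1-c\,\Delta/t)$ for a suitable constant $c>0$: because $e(G_T)\leq\tfrac{r}{T}m=\tfrac{r}{T}m$, writing $e(G_T)=\binom{x}{r-1}$ we get $x$ bounded away from $t$ by a multiple of $\Delta$, and then by the induction hypothesis (Corollary~\ref{cor:FFapprox} for $r-1$) together with the asymptotics~\eqref{eq:asymptt-1}, $\lambda(G_T)$ is smaller than $\tfrac1{r!}$ by a term of order $\Delta/t$.

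With that in hand the computation runs as follows. From $\eqref{eq:basicassumption}$, $\eqref{eq:lagridentity}$ and $\eqref{eq:scaling}$ applied to the index $i=T$,
\begin{align*}
\frac{1}{(t-1)^r}\binom{t-1}{r}<L(G,\vec x)=\frac1r L(G_T,\vec x)\leq \frac{(1-x_T)^{r-1}}{r}\,\lambda(G_T).
\end{align*}
Plugging in the upper bound $\lambda(G_T)\leq \tfrac{1}{(r-1)!}\bigl(1-c\Delta/t\bigr)+O(t^{-2})$ and using $\eqref{eq:asymptt-1}$ on the left gives
\begin{align*}
(1-x_T)^{r-1}\geq \frac{1-\binom{r}{2}/t+O(t^{-2})}{1-c\Delta/t+O(t^{-2})}\geq 1+c'\frac{\Delta}{t}+O(t^{-2})
\end{align*}
for some $c'>0$ once $\Delta$ exceeds an absolute constant — wait, this goes the wrong way, so in fact the correct reading is that the inequality becomes \emph{inconsistent} unless $x_T$ is small, and rearranging $(1-x_T)^{r-1}\le 1-(r-1)x_T+\binom{r-1}{2}x_T^2$ against the lower bound forces $x_T\leq C\Delta/t$ for a constant $C$; this is the ``weak'' form of the lemma. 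To get the stated $\Delta^{1/(r-1)}$ saving one iterates: knowing $x_T$ is already $O(\Delta/t)$, one sharpens the estimate on $e(G_T)$ by exploiting left-compression more carefully, namely that the last $\Delta+1$ links $G_{t-1},\dots,G_T$ are nested (as $G$ is left-compressed, $G_j\supseteq G_{j+1}$ after discarding the coordinate $j$), so their edge sets are pairwise comparable and $\sum_{j\geq t}e(G_j)\geq \Delta\,e(G_T)$; combined with $\sum_j e(G_j)=re(G)<r\binom{t}{r}$ and $e(G_j)\ge e(G_T)$ for $j\le t$, this yields a bound like $e(G_T)<\tfrac{r}{\,T+\text{(extra }\Delta\text{ terms)}}\binom{t}{r}$, improving $x$ below $t$ by a factor involving $\Delta$, and feeding that back through the same chain produces the $\Delta^{1/(r-1)}$-gain.

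The main obstacle I anticipate is making the ``$\lambda(G_T)$ is small because $e(G_T)$ is small'' step quantitatively tight enough: the induction hypothesis only gives $\lambda(G_T)\leq \tfrac1{s^{r-1}}\binom{s}{r-1}$ when $e(G_T)\le\binom{s}{r-1}-\gamma_{r-1}s^{r-3}$, so one must first check that $e(G_T)$ lands in the admissible window (it does, comfortably, since $e(G_T)\le\tfrac rt\binom tr=\binom{t-1}{r-1}$ is bounded \emph{away} from $\binom{s}{r-1}$ by roughly $\Delta\binom{s-1}{r-2}\gg s^{r-3}$ once $\Delta\ge1$), and then track the resulting $\Theta(\Delta/t)$ deficit through Bernoulli-type estimates on $(1-x_T)^{r-1}$ without losing the exponent $1/(r-1)$. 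Handling the regime where $\Delta$ is a large constant fraction of $t$ (so $\Delta^{1/(r-1)}$ is itself $\Theta(t^{1/(r-1)})$) versus $\Delta$ a fixed constant requires a uniform treatment, but in both cases the factor $10$ in~\eqref{eq:xTbound} is generous enough to absorb the $O(t^{-2})$ error terms and small-$r$ slack, exactly as in Lemma~\ref{lem:10t}.
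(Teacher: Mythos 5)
Your plan goes via Proposition~\ref{prop:derivatives}(i): bound $\lambda(G_T)$ using the induction hypothesis, then feed $(1-x_T)^{r-1}\lambda(G_T)\ge r\lambda(G)$. The key quantitative claim that ``$\lambda(G_T)$ is smaller than $\tfrac1{(r-1)!}$ by a term of order $\Delta/t$'' is not correct, and once fixed the approach collapses. Indeed, from $e(G_T)\le\tfrac{r}{T}m\le\tfrac{t}{T}\binom{t-1}{r-1}$ one gets $e(G_T)=\binom{s}{r-1}$ with $s\approx (t-1)(t/T)^{1/(r-1)}$, so by~\eqref{eq:asymptt-1} the induction hypothesis gives
$\lambda(G_T)\le\tfrac{1}{(r-1)!}\bigl(1-\binom{r-1}{2}/s\bigr)+O(t^{-2})$ where $\binom{r-1}{2}/s=\binom{r-1}{2}/(t-1)+O(\Delta/t^2)$: the $\Delta$-deficit enters at order $\Delta/t^{2}$, not $\Delta/t$. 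Pushing that through the chain merely returns $x_T\le \tfrac1t\bigl(1-O(\Delta/t)\bigr)$, i.e.\ essentially $x_T\lesssim 1/t$; for constant $\Delta$ this is no better than what one already knows, and nowhere near the $\Delta^{1/(r-1)}$-saving. Your proposed ``iteration'' does not rescue this: the estimate $Te(G_T)\le\sum_j e(G_j)=re(G)$ you invoke (via $e(G_j)\ge e(G_T)$ for all $j$) is precisely Proposition~\ref{prop:KK}(ii) again, so it yields the same $e(G_T)\le rm/T$ and no improvement on $s$.

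The actual source of the exponent $1/(r-1)$ is a different identity, Proposition~\ref{prop:derivatives}(ii): $(x_1-x_T)L(G_{1,T},\vec x)=L(G_{1\setminus T},\vec x)$. The paper bounds the left side above by $x_1/(r-2)!\le r/\bigl(t(r-2)!\bigr)$ via Lemma~\ref{lem:mcl} and Lemma~\ref{lem:coarsex1}, and the right side below by $e(G_{1\setminus T})\,x_T^{r-1}$ (since $x_T$ is the smallest nonzero weight). The crucial point you are missing is that $e(G_{1\setminus T})=e(G_1)-e(G_T)$ because $G$ is left-compressed, and then the \emph{lower} bound on $e(G_1)$ from Kruskal--Katona (Proposition~\ref{prop:KK}(i)) combined with the upper bound on $e(G_T)$ gives $e(G_{1\setminus T})\ge rm\bigl(\tfrac1t-\tfrac1T\bigr)=\tfrac{\Delta rm}{tT}\gtrsim \Delta t^{r-2}$. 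Dividing and taking $(r-1)$-th roots gives $x_T\lesssim \bigl(\Delta t^{r-1}\bigr)^{-1/(r-1)}$, which is the statement. You never use the link-difference $G_{1\setminus T}$, nor the lower bound $e(G_1)\ge rm/t$, and these are exactly the ingredients that make $\Delta$ appear with the right power; without them the approach cannot produce~\eqref{eq:xTbound}.
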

\begin{proof}
Recall from Proposition~\ref{prop:derivatives} that 
$$(x_1-x_{T})L(G_{1,T},\vec{x})=L(G_{1\setminus T},\vec{x}),
$$
and note that, by Lemma~\ref{lem:mcl}, we have $L(G_{1,T},\vec{x})\leq 1/(r-2)!$. Combining these facts with Lemma~\ref{lem:coarsex1}, we obtain
$$\frac{r}{t(r-2)!}\geq\frac{x_1}{(r-2)!}\geq (x_1-x_T)L(G_{1,T},\vec{x})=L(G_{1\setminus T},\vec{x})\geq e(G_{1\setminus T})x_T^{r-1}.
$$
Thus,
\begin{align}\label{eq:xTeG1/T}
x_T\leq \left(\frac{r}{t(r-2)!\cdot e(G_{1/T})}\right)^{{1}/(r-1)}.
\end{align}
Now, observe that, since $G$ is left-compressed, we have 
\begin{align}\label{eq:linkrgraphs}
e(G_{1\setminus T})&=e(G_1)-e(G_{1,T})-e(G_1\cap G_T)=e(G_1)-e(G_{1,T})-(e(G_T)-e(G_{1,T}))\nonumber \\
&=e(G_1)-e(G_T).
\end{align} 
%Note also that due to $t\leq s \leq t+\Delta$, we have 
%$$
%e(G[s])=(1+o(1))e(G)=(1+o(1))\binom{t}{r}.
%$$
Let $x$ be the real number satisfying $m=\binom{x}{r}$, so that $t-1\leq x<t$. 
By Proposition~\ref{prop:KK}(i) we have 
%\binom{x-1}{r-1}=\frac{r}{x}\binom{x}{r}=
%, since $G$ is left-compressed, %and $m\geq \binom{t}{r}$, 
$$e(G_1)\geq \frac{rm}{x}>\frac{rm}{t}.$$ On the other hand, by Proposition~\ref{prop:KK}(ii), we have
$e(G_T)\leq {rm}/{T}$. Together with Lemma~\ref{lem:10t} this implies
\begin{align}\label{eq:eG1T}
e(G_{1\setminus T})&\stackrel{\eqref{eq:linkrgraphs}}{=}e(G_1)-e(G_T)\geq (\frac{1}{t}-\frac{1}{T})rm=\frac{\Delta r m}{tT}\geq \frac{\Delta r m}{10t^2}
>\frac{\Delta r}{10t^2}\cdot \frac{3}{4}\binom{t}{r}\nonumber\\
&>\frac{\Delta t^{r-2}}{20(r-1)!}.
\end{align}
Hence, combining~\eqref{eq:xTeG1/T} and~\eqref{eq:eG1T}, we obtain
$$x_T\leq \left(\frac{r}{t(r-2)!}\cdot \frac{20(r-1)!}{\Delta t^{r-2}}\right)^{1/(r-1)}=\left(\frac{20r(r-1)}{\Delta t^{r-1}}\right)^{1/(r-1)}< \frac{10}{\Delta^{1/(r-1)}t}.
$$
\end{proof}
\noindent
Next, we want to establish a similar upper bound for $x_q$ where $q$ is `somewhat smaller' than $t$; this will prove crucial in due course. More precisely, let $q\in \mathbb{N}$ be such that
\begin{align}\label{eq:defq}
\binom{q-1}{r-1}\leq\frac{t}{T-1}\binom{t-1}{r-1}< \binom{q}{r-1}.
\end{align}
%and note that $q=(1+o(1))t(\frac{t}{T})^{1/(r-1)}$. 
The following technical lemma bounds $q$ from above and below.

\begin{lem}\label{lem:qestimate}
We have
\begin{itemize}
\item[(i)] 
$$t\cdot \left(\frac{t}{T}\right)^{\frac{1}{r-1}}-r<q<t\cdot \left(\frac{t}{T}\right)^{\frac{1}{r-1}}+r.
$$ 
\noindent
In particular, 
\begin{align}\label{eq:qThetat}
q=(1+o(1))t\left(\frac{t}{T}\right)^{\frac{1}{r-1}}=\Theta(t).
\end{align}
\item[(ii)] If $\Delta>4r$ then
$ t-q<\Delta.
$
\end{itemize}
\end{lem}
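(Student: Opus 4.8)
The plan is to extract both parts from the defining inequality \eqref{eq:defq}, namely
$\binom{q-1}{r-1}\le \tfrac{t}{T-1}\binom{t-1}{r-1}<\binom{q}{r-1}$,
by comparing $\binom{q}{r-1}$ with $\binom{t}{r-1}$ via the ratio $(q/t)^{r-1}$ up to lower-order corrections. For part (i), first I would rewrite the middle term: since $\binom{t-1}{r-1}=\tfrac{t-r+1}{t}\binom{t}{r-1}$, we get $\tfrac{t}{T-1}\binom{t-1}{r-1}=\tfrac{t-r+1}{T-1}\binom{t}{r-1}$, which lies between $\tfrac{t-r}{T}\binom{t}{r-1}$ and $\tfrac{t}{T}\binom{t}{r-1}$ for $t$ large. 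Using the elementary estimate that for integers $a\ge b\ge r-1$ one has $(a-r+1)^{r-1}\le (r-1)!\binom{a}{r-1}\le a^{r-1}$ (i.e. $\binom{a}{r-1}=\tfrac{1}{(r-1)!}a^{r-1}(1+O(a^{-1}))$ uniformly when $a=\Theta(t)$, which follows from \eqref{eq:asymptt-1}), the inequality $\binom{q-1}{r-1}\le \tfrac{t}{T}\binom{t}{r-1}$ forces $(q-1-r+1)^{r-1}\le t^r/T$, hence $q-r\le t(t/T)^{1/(r-1)}$, giving the upper bound $q<t(t/T)^{1/(r-1)}+r$ after absorbing the $+1$. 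Symmetrically, $\binom{q}{r-1}>\tfrac{t-r+1}{T-1}\binom{t}{r-1}$ together with $\binom{q}{r-1}\le \tfrac{1}{(r-1)!}q^{r-1}$ and $\binom{t}{r-1}\ge \tfrac{1}{(r-1)!}(t-r)^{r-1}$ yields $q^{r-1}\ge \tfrac{(t-r)^{r}}{T-1}\ge \tfrac{(t-r)^r}{T}$, so $q\ge (t-r)(t/T)^{1/(r-1)}\cdot(1-r/t)^{\dots}$ — more carefully, $q\ge ((t-r)^r/T)^{1/(r-1)}=(t-r)\cdot((t-r)/T)^{1/(r-1)}$, and since $T\le 10t$ (Lemma~\ref{lem:10t}) and the exponent $1/(r-1)\le 1/3$, the multiplicative loss from replacing $t-r$ by $t$ inside both factors is $O(1)$, giving $q>t(t/T)^{1/(r-1)}-r$ for large $t$. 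The asymptotic statement \eqref{eq:qThetat} is then immediate, with $q=\Theta(t)$ because $t/T\in[1/10,1]$.

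For part (ii), the goal is to show $t-q<\Delta$ when $\Delta>4r$. Recall $\Delta=T-t$, so $T=t+\Delta$ and $t/T=1/(1+\Delta/t)$. By part (i) it suffices to show $t-t(t/T)^{1/(r-1)}-r<\Delta$, i.e. $t\bigl(1-(t/T)^{1/(r-1)}\bigr)<\Delta+r$. Writing $u=\Delta/t\in(0,9]$ (by Lemma~\ref{lem:10t}), the left side is $t\bigl(1-(1+u)^{-1/(r-1)}\bigr)$. The key inequality is $1-(1+u)^{-1/(r-1)}\le \tfrac{u}{r-1}$, valid for all $u\ge 0$ and $r\ge 2$: this follows since $(1+u)^{1/(r-1)}\ge 1+\tfrac{u}{r-1}$ by Bernoulli's inequality (as $1/(r-1)\le 1$... wait, Bernoulli with exponent $<1$ goes the other way), so instead use $(1+u)^{-1/(r-1)}\ge 1-\tfrac{u}{r-1}$ directly by the convexity/Bernoulli-type bound $(1+x)^\alpha\ge 1+\alpha x$ for $\alpha=-1/(r-1)\le 0$ and $x=u>-1$, which is exactly Bernoulli's inequality for negative exponents. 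Hence $t\bigl(1-(1+u)^{-1/(r-1)}\bigr)\le \tfrac{tu}{r-1}=\tfrac{\Delta}{r-1}\le\Delta<\Delta+r$, and combining with part (i),
$t-q<t-t(t/T)^{1/(r-1)}+r\le \tfrac{\Delta}{r-1}+r.$
This is $<\Delta$ precisely when $r<\Delta(1-\tfrac{1}{r-1})=\Delta\cdot\tfrac{r-2}{r-1}$, i.e. when $\Delta>\tfrac{r(r-1)}{r-2}$; since $\tfrac{r(r-1)}{r-2}\le 4r$ for $r\ge 3$ (as it equals $r\cdot\tfrac{r-1}{r-2}\le 2r\le 4r$ for $r\ge 4$, and $=6<12$ for $r=3$), the hypothesis $\Delta>4r$ suffices.

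The main obstacle I anticipate is bookkeeping the error terms uniformly in $r$ while keeping the $\pm r$ slack in part (i) honest: the transitions between $\binom{a}{r-1}$ and $\tfrac{1}{(r-1)!}a^{r-1}$ must be controlled with explicit (not just $O(\cdot)$) bounds of the form $(a-r+1)^{r-1}\le (r-1)!\binom{a}{r-1}\le a^{r-1}$ so that the constant in the final $\pm r$ is genuinely $r$ and not an unspecified $C_r$; this is where I would spend the most care. The rest — part (ii), the asymptotics, and the reduction — is routine once part (i) is pinned down with those clean two-sided estimates and Lemma~\ref{lem:10t} is invoked to keep $t/T$ bounded away from $0$.
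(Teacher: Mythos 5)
Your overall strategy for part (i) — sandwiching $q$ by turning $\binom{q-1}{r-1}\le\frac{t}{T-1}\binom{t-1}{r-1}<\binom{q}{r-1}$ into power estimates via $\frac{(a-r+1)^{r-1}}{(r-1)!}\le\binom{a}{r-1}\le\frac{a^{r-1}}{(r-1)!}$ — is the same as the paper's, and your upper bound $q<t(t/T)^{1/(r-1)}+r$ comes out identically. The place where your version loses ground, and which you yourself flag, is the lower bound: by first rewriting $\frac{t}{T-1}\binom{t-1}{r-1}=\frac{t-r+1}{T-1}\binom{t}{r-1}$ and then downgrading \emph{both} the prefactor and $\binom{t}{r-1}$, you arrive at $q^{r-1}\ge(t-r)^r/T$, i.e.\ $q\ge (t-r)\bigl(\frac{t-r}{T}\bigr)^{1/(r-1)}$. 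This does \emph{not} imply $q>t(t/T)^{1/(r-1)}-r$: unwinding it gives an additive loss of about $\frac{r^2}{r-1}>r$, and for $T$ close to $t$ the inequality $rT^{1/(r-1)}>t^{r/(r-1)}-(t-r)^{r/(r-1)}$ you would need actually fails. The paper's chain is slightly more economical: it keeps $\frac{t}{T-1}>\frac{t}{T}$ intact and only degrades $\binom{t-1}{r-1}>\frac{(t-r)^{r-1}}{(r-1)!}$, giving $q^{r-1}>\frac{t(t-r)^{r-1}}{T}$ and hence $q>(t-r)(t/T)^{1/(r-1)}=t(t/T)^{1/(r-1)}-r(t/T)^{1/(r-1)}>t(t/T)^{1/(r-1)}-r$, using only $(t/T)^{1/(r-1)}<1$. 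So your version yields $q>t(t/T)^{1/(r-1)}-c_r$ with a constant $c_r$ a bit larger than $r$. This is a real (if minor) gap against the literal statement; it is harmless for \eqref{eq:qThetat} and for part (ii), where any $O(1)$ slack is absorbed, but you should reorganise the lower-bound chain as above to get the clean $\pm r$.

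Your part (ii) is correct and takes a genuinely different route from the paper. The paper proves $t-q<\Delta$ directly at the level of binomial coefficients, checking $\binom{t-\Delta}{r-1}<\frac{t}{T-1}\binom{t-1}{r-1}$ by observing that $(t+\Delta)(t-\Delta)^{r-1}$ is decreasing in $\Delta$ on $[0,t]$ and evaluating at $\Delta=4r$. You instead feed part (i) into Bernoulli's inequality with negative exponent, $(1+u)^{-1/(r-1)}\ge 1-\frac{u}{r-1}$, to get $t\bigl(1-(t/T)^{1/(r-1)}\bigr)\le\frac{\Delta}{r-1}$ and then close with the algebraic check $\frac{r(r-1)}{r-2}\le 4r$. (You have a transient sign slip — ``it suffices to show $t-t(t/T)^{1/(r-1)}-r<\Delta$'' should read $+r$ — but your final computation $t-q<\frac{\Delta}{r-1}+r<\Delta$ is the correct one.) The Bernoulli route is arguably cleaner and more conceptual, while the paper's is more self-contained within the binomial-coefficient calculus it uses throughout; both give the same $4r$ threshold.
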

\begin{proof}
For the upper bound in (i) observe that
$$\frac{(q-r)^{r-1}}{(r-1)!}<\binom{q-1}{r-1}\stackrel{\eqref{eq:defq}}{\leq}\frac{t}{T-1}\binom{t-1}{r-1}<\frac{t}{T}\frac{t^{r-1}}{(r-1)!},
$$
where the last inequality uses $(t-1)/(T-1)\leq t/T$.
The lower bound in (i) follows similarly:
$$\frac{q^{r-1}}{(r-1)!}>\binom{q}{r-1}\stackrel{\eqref{eq:defq}}{>}\frac{t}{T-1}\binom{t-1}{r-1}>\frac{t}{T}\frac{(t-r)^{r-1}}{(r-1)!}.
$$
%and the fact that $t\leq T$.
With these bounds, $q=\Theta(t)$ follows from $T=\Theta(t)$, which we know to hold by Lemma~\ref{lem:10t}.

To show (ii), notice that this is self-evident when $\Delta\geq t$, and for $\Delta<t$ it suffices to verify the inequality $\binom{t-\Delta}{r-1}<\frac{t}{T-1}\binom{t-1}{r-1}$. Substituting $T=t+\Delta$ yields
$$\frac{t}{T-1}\binom{t-1}{r-1}-\binom{t-\Delta}{r-1}>\frac{t(t-r)^{r-1}-(t+\Delta)(t-\Delta)^{r-1}}{(T-1)(r-1)!}.
$$
Since $(t+\Delta)(t-\Delta)^{r-1}=(t^2-\Delta^2)(t-\Delta)^{r-2}$ is a decreasing function of $\Delta$ in the domain $\Delta\in [0,t]$, for $\Delta>4r$ we get 
\begin{align*}
t(t-r)^{r-1}-(t+\Delta)(t-\Delta)^{r-1}&>t(t-r)^{r-1}-(t+4r)(t-4r)^{r-1}\\
&=(4r(r-2)-r(r-1))t^{r-2}+O(t^{r-3})>0.
\end{align*} 
 
\end{proof}
\noindent
Now let us estimate $x_q$.
\begin{lem}\label{lem:pushbeyond}
If $\Delta\geq 1$ then
\begin{align*}
x_q\leq \frac{10r\Delta^{-1/{(r-1)^2}}}{t}.
\end{align*}
\end{lem}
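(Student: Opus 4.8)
The plan is to bootstrap the bound on $x_T$ from Lemma~\ref{lem:Tbound} up to an index $q$ which is strictly smaller than $t$. The key observation is that the index $q$ was chosen in \eqref{eq:defq} precisely so that $q$ plays, within the link $G_T$, the role that $t$ plays within $G$: since $G$ is left-compressed, $e((G_T)_j)\leq \frac{r-1}{j}e(G_T)$ and $e((G_T)_1)\geq \frac{r-1}{y}e(G_T)$ where $e(G_T)=\binom{y}{r-1}$, so running the argument of Lemma~\ref{lem:Tbound} \emph{inside} the $(r-1)$-graph $G_T$ (rather than inside $G$) will convert a bound at one end of the spectrum into a bound at a shifted index. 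Concretely, I would first apply Proposition~\ref{prop:derivatives}(ii) to the pair $\{1,q\}$, using Lemma~\ref{lem:mcl} to bound $L(G_{1,q},\vec{x})\leq 1/(r-2)!$ and Lemma~\ref{lem:coarsex1} to bound $x_1\leq r/t$, giving
\begin{align*}
\frac{r}{t(r-2)!}\geq (x_1-x_q)L(G_{1,q},\vec{x})=L(G_{1\setminus q},\vec{x})\geq e(G_{1\setminus q})\,x_q^{r-1},
\end{align*}
so that $x_q\leq \bigl(r/(t(r-2)!\,e(G_{1\setminus q}))\bigr)^{1/(r-1)}$, exactly as in \eqref{eq:xTeG1/T}.

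The main work is then a good lower bound on $e(G_{1\setminus q})=e(G_1)-e(G_q)$ (using, as in \eqref{eq:linkrgraphs}, that $G$ is left-compressed so $e(G_1\cap G_q)=e(G_{1,q})$). For $e(G_1)$ I would again invoke Proposition~\ref{prop:KK}(i): with $m=\binom{x}{r}$, $t-1\leq x<t$, we get $e(G_1)\geq rm/x>rm/t$. For the upper bound on $e(G_q)$, the left-compressedness gives $e(G_q)\leq \frac{r}{q}m$ by Proposition~\ref{prop:KK}(ii), but this is too weak; the sharper route is to notice that $G$ covers pairs and is left-compressed, so $G_q\subseteq G_1$ and more usefully $e(G_q)\leq e(G_T)\cdot\frac{\text{something}}{\text{something}}$ — rather, the cleanest path is: since $e(G_q)\le \frac{r}{q}m$ and we also know $e(G_q)\le e(G_t)$ is not available, so instead I would compare with $e(G_T)$ via Proposition~\ref{prop:KK} applied to the left-compressed $r$-graph $G$, obtaining $e(G_q)\le \frac rq m$ and combining with $q$ from \eqref{eq:defq}. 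Then
\begin{align*}
e(G_{1\setminus q})=e(G_1)-e(G_q)\geq rm\Bigl(\frac1x-\frac1q\Bigr)\geq rm\Bigl(\frac1t-\frac1q\Bigr),
\end{align*}
and the definition \eqref{eq:defq} together with Lemma~\ref{lem:qestimate}(i) forces $q<t$ with $t-q=\Theta(t(1-(t/T)^{1/(r-1)}))=\Theta(t\Delta/((r-1)T))$, so $1/t-1/q\geq (t-q)/(qt)\gtrsim \Delta/((r-1)t^2\cdot(q/t))$. Feeding $m\geq \binom{t-1}{r}\geq \tfrac34\binom tr\geq t^{r}/(2\cdot r!)$ (for large $t$) and $q=\Theta(t)$ into this yields $e(G_{1\setminus q})\gtrsim \Delta t^{r-2}/C(r)$ for an explicit $C(r)$, mirroring \eqref{eq:eG1T}.

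Plugging this into $x_q\leq\bigl(r/(t(r-2)!\,e(G_{1\setminus q}))\bigr)^{1/(r-1)}$ gives $x_q\leq (C'(r)/(\Delta t^{r-1}))^{1/(r-1)}=C'(r)^{1/(r-1)}\Delta^{-1/(r-1)}/t$, which is \emph{better} than the claimed $10r\Delta^{-1/(r-1)^2}/t$ whenever $\Delta\geq 1$, since $\Delta^{-1/(r-1)}\leq \Delta^{-1/(r-1)^2}$ and the constant $C'(r)^{1/(r-1)}$ is absorbed into $10r$ for large $t$; so the stated bound follows a fortiori. The one genuine subtlety — the main obstacle — is handling the regime of small $\Delta$ (in particular $1\leq\Delta\leq 4r$, where Lemma~\ref{lem:qestimate}(ii) does not apply and $t-q$ could in principle be tiny, making $1/t-1/q$ too small): in that regime I would instead not gain a factor in $\Delta$ at all, but simply note $q\leq t$ so $e(G_{1\setminus q})\geq e(G_{1\setminus t})$ if $q\le t$... more carefully, for bounded $\Delta$ one has $t-q\geq 1$ (as $q<t$ strictly from \eqref{eq:defq} once $T>t$), giving $e(G_{1\setminus q})\gtrsim t^{r-2}/C(r)$ and hence $x_q\leq C''(r)/t\leq 10r\Delta^{-1/(r-1)^2}/t$ since $\Delta^{-1/(r-1)^2}\geq (4r)^{-1/(r-1)^2}$ is bounded below by an absolute constant. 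The factor $\Delta^{-1/(r-1)^2}$ (rather than $\Delta^{-1/(r-1)}$) in the statement is presumably the deliberately lossy form that makes a single clean inequality valid across all $\Delta\geq 1$, so I would aim to prove the uniform bound $x_q\le C(r)\Delta^{-1/(r-1)}/t$ and then weaken it.
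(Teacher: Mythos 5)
Your approach has two genuine gaps, each fatal on its own, and the paper's proof is in fact a completely different (exchange) argument.

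First, the inequality $L(G_{1\setminus q},\vec{x})\geq e(G_{1\setminus q})\,x_q^{r-1}$ is not justified. In the analogous step inside Lemma~\ref{lem:Tbound}, the bound $L(G_{1\setminus T},\vec{x})\geq e(G_{1\setminus T})\,x_T^{r-1}$ holds because $x_T$ is the \emph{minimum} positive weight, so every vertex of every edge of $G_{1\setminus T}$ carries weight at least $x_T$. By contrast, edges of $G_{1\setminus q}$ may well contain vertices from $\{q+1,\dots,T\}$, whose weights are strictly less than $x_q$; left-compression of $G$ does not rule this out, since the shift $a\mapsto q$ applied to $A\cup\{1\}$ (for $a\in A$, $a>q$) produces $(A\setminus\{a\})\cup\{1,q\}$, which is a different set from $A\cup\{q\}$, so $A\cup\{q\}\notin G$ remains entirely possible. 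Second, the proposed lower bound $e(G_{1\setminus q})\geq rm(1/t-1/q)$ has the wrong sign: here $q<t$, so $1/t-1/q<0$, and your passage to $(t-q)/(qt)$ silently flips it. In fact Proposition~\ref{prop:KK} gives $e(G_1)>\tfrac{r}{t}m$ and $e(G_q)\leq\tfrac{r}{q}m$ with $\tfrac{r}{q}m>\tfrac{r}{t}m$, so these two estimates give no lower bound at all on $e(G_1)-e(G_q)$; unlike the pair $(1,T)$, where $T>t$ makes the gap positive and of order $\Delta t^{r-2}$, the pair $(1,q)$ sits on the wrong side. This is not a matter of handling small $\Delta$ separately: the degree-gap route simply does not apply once you move the second index below $t$.

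The paper instead uses the definition of $q$ for an exchange argument, which you did not attempt. Since $e(G_{T-1})<\binom{q}{r-1}$ (this is exactly \eqref{eq:defq}), and $G$ is left-compressed, $[q]^{(r-1)}\not\subseteq G_{T-1}$, so the specific set $B=\{q-r+2,\dots,q\}\cup\{T-1\}$ is a non-edge. Since $G$ covers pairs, some edge $A$ contains $\{T-1,T\}$. Swapping $A$ for $B$ cannot increase the Lagrangian, so $L(A,\vec{x})\geq L(B,\vec{x})$, i.e.\ $x_1^{r-2}x_{T-1}x_T\geq x_q^{r-1}x_{T-1}$, giving $x_q\leq(x_1^{r-2}x_T)^{1/(r-1)}$; feeding in Lemmas~\ref{lem:coarsex1} and~\ref{lem:Tbound} produces precisely the exponent $\Delta^{-1/(r-1)^2}$. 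Note that this exponent is not, as you surmised, a deliberately lossy weakening of a $\Delta^{-1/(r-1)}$ bound: it arises structurally from raising $x_T\lesssim\Delta^{-1/(r-1)}/t$ to the power $1/(r-1)$. The fact that your route seemed to give a strictly stronger bound than the stated one should itself have been a warning sign.
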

\begin{proof}
By Proposition~\ref{prop:KK}(ii) we have  
\begin{align}\label{eq:lazy}
e(G_{T-1})\leq \frac{r}{T-1}m\leq \frac{r}{T-1}\binom{t}{r}=\frac{t}{T-1}\binom{t-1}{r-1}
\stackrel{\eqref{eq:defq}}{<}\binom{q}{r-1}.
\end{align}
Since, by Proposition~\ref{prop:lagrbasicfact}, $G$ covers pairs, there exists some $A\in G$ with $\{T-1,T\} \subseteq A$; note that $L(A,\vec{x})\leq x_1^{r-2}x_{T-1}x_T$.
On the other hand, since, also by Proposition~\ref{prop:lagrbasicfact}, $G$ is left-compressed, for the set $B=\{q-r+2,\dots,q\}\cup \{T-1\}$ we must have $B\notin G$, for otherwise we would have $[q]^{(r-1)}\subseteq G_{T-1}$, contradicting~\eqref{eq:lazy} (this is the reasoning behind the definition of $q$).

Note that if we had $L(B,\vec{x})>L(A,\vec{x})$, then for the $r$-graph $G'=(G\setminus \{A\})\cup \{B\}$, with $e(G')=m$, we would have $\lambda(G')\geq L(G',\vec{x})>  L(G,\vec{x})=\lambda(G)$, a contradiction. Therefore,
\begin{align*}
x_1^{r-2}x_{T-1}x_T \geq L(A,\vec{x})\geq L(B,\vec{x})\geq x_q^{r-1}x_{T-1}.
\end{align*} 
Together with Lemma~\ref{lem:coarsex1} and Lemma~\ref{lem:Tbound}, this gives
\begin{align*}
x_q\leq (x_1^{r-2}x_T)^{1/(r-1)}\leq \left(\frac{r^{r-2}}{t^{r-2}}\cdot \frac{10}{\Delta^{1/(r-1)}t}\right)^{1/(r-1)}\leq\frac{10r}{\Delta^{1/(r-1)^2}t}.
\end{align*}
\end{proof}
\noindent
Combining Lemma~\ref{lem:qestimate} and Lemma~\ref{lem:pushbeyond} we obtain the following upper bound on the sum of all `small' weights. 
\begin{cor}\label{cor:boundthetail} If $\Delta>4r$ then
$$\sum_{i={q+1}}^{T}x_i\leq 2\Delta x_q \leq \frac{20r\Delta^{1-\frac{1}{(r-1)^2}}}{t}.
$$
\end{cor}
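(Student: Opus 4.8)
The plan is to combine the two preceding lemmas with the fact that the weights $x_i$ are listed in non-increasing order. The only quantity that needs a moment's thought is the number of summands, $T-q$: writing $T-q=(T-t)+(t-q)=\Delta+(t-q)$ and invoking Lemma~\ref{lem:qestimate}(ii), which applies precisely because we are assuming $\Delta>4r$, we get $t-q<\Delta$ and hence $T-q<2\Delta$.

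Next I would use monotonicity. Since $x_1\geq x_2\geq\dots\geq x_T$, every index $i$ with $q+1\leq i\leq T$ satisfies $x_i\leq x_q$. Therefore
\[
\sum_{i=q+1}^{T}x_i\leq (T-q)\,x_q< 2\Delta\, x_q,
\]
which is the first of the two claimed inequalities.

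For the second inequality I would simply substitute the bound from Lemma~\ref{lem:pushbeyond}, namely $x_q\leq 10r\,\Delta^{-1/(r-1)^2}/t$, into $2\Delta x_q$, obtaining
\[
2\Delta x_q\leq 2\Delta\cdot\frac{10r\,\Delta^{-1/(r-1)^2}}{t}=\frac{20r\,\Delta^{\,1-1/(r-1)^2}}{t}.
\]
This closes the argument. There is no real obstacle here: the corollary is a bookkeeping step that packages Lemmas~\ref{lem:qestimate} and~\ref{lem:pushbeyond} into the single estimate on the mass of the small weights that will be fed into the later sections; the only point to be careful about is that the hypothesis $\Delta>4r$ is exactly what licenses the use of Lemma~\ref{lem:qestimate}(ii) to control $t-q$.
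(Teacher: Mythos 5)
Your proposal is correct and matches the paper's intent exactly: the paper states this as an unproved corollary obtained by "combining" Lemma~\ref{lem:qestimate} and Lemma~\ref{lem:pushbeyond}, and your argument---using Lemma~\ref{lem:qestimate}(ii) to get $T-q=\Delta+(t-q)<2\Delta$, monotonicity of the weights to bound the sum by $(T-q)x_q$, and then substituting the bound on $x_q$ from Lemma~\ref{lem:pushbeyond}---is precisely that combination.
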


\section{A better estimate for $T$}\label{sec:x1}

Our next task will be to prove that $\Delta$ is in fact bounded above by some (large) constant.

\begin{lem}\label{lem:bigmain}
There exists a constant $C_0(r)$ such that $\Delta\leq C_0(r)$.
\end{lem}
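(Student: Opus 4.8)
The plan is to show that if $\Delta = T-t$ is large, then the weight vector $\vec{x}$ is forced to be so ``flat'' over its first $\approx t$ coordinates that $L(G,\vec{x})$ cannot exceed $\lambda([t-1]^{(r)})$, contradicting~\eqref{eq:basicassumption}. Concretely, I would split $[T]$ into the ``heavy'' block $[q]$ and the ``light'' block $\{q+1,\dots,T\}$, where $q$ is as in~\eqref{eq:defq}. By Corollary~\ref{cor:boundthetail}, the total weight on the light block is at most $20r\Delta^{1-1/(r-1)^2}/t =: \sigma$, which is small compared to $1$ but crucially, by Lemma~\ref{lem:qestimate}(i), the light block has size $T-q \geq \Delta - (t-q) \gtrsim \Delta - \Delta = $ something I need to be careful about — actually the point is that $q$ itself satisfies $q = (1+o(1))t(t/T)^{1/(r-1)} < t$, so the heavy block has fewer than $t$ vertices, and I want to argue that moving all the light weight into the heavy block (or deleting it) loses little.

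The key estimate: write $Y = \sum_{i=1}^q x_i = 1 - \sum_{i=q+1}^T x_i \geq 1 - \sigma$, and bound
$$L(G,\vec{x}) = L(G\cap [q]^{(r)}, \vec{x}) + (\text{edges meeting the light block}).$$
For the first term, Maclaurin (Lemma~\ref{lem:mcl}) and $q < t$ (using Lemma~\ref{lem:qestimate}(i) and $T > t$, so $q < t$) give $L(G\cap[q]^{(r)},\vec{x}) \leq \lambda([q]^{(r)}) \leq \lambda([t-1]^{(r)})$ — wait, I need $q \leq t-1$, which holds once $\Delta \geq 1$ since $q < t(t/T)^{1/(r-1)} + r$; I may need $\Delta$ large enough that $(t/T)^{1/(r-1)}$ pulls $q$ safely below $t-1$, and indeed for $\Delta \geq C$ one gets $q \leq t - c\Delta/t \cdot t$... this requires Lemma~\ref{lem:qestimate}(ii), $t - q < \Delta$ together with a matching lower bound $t-q \gtrsim \Delta/t^{?}$. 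For the second term (edges using at least one light vertex), I bound it crudely: each such edge contributes at most $x_1^{r-1} \cdot (\text{a light weight})$ summed appropriately, giving at most $r \cdot x_1^{r-2} \cdot \binom{\text{stuff}}{} \cdot \sigma \cdot x_q^{?}$; more cleanly, the contribution is at most $(\text{sum of light weights}) \cdot r \cdot \lambda(G_i)/r \cdot (\text{scaling})$, i.e. using~\eqref{eq:lagridentity} and~\eqref{eq:scaling}, $\sum_{i=q+1}^T x_i L(G_i,\vec{x})/1 \leq \sigma \cdot r\lambda(G) \cdot$ — this needs iterating. I would instead directly estimate: the edges meeting $\{q+1,\dots,T\}$ have total weight at most $\sum_{i>q} x_i L(G_i,\vec x)$, and by~\eqref{eq:scaling}, $L(G_i,\vec x)\le (1-x_i)^{r-1}\lambda(G_i)\le \lambda([T-1]^{(r)}) < 1/(r-1)!$, so this contribution is at most $\sigma/(r-1)!$.

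Putting it together: $\lambda(G) = L(G,\vec x) \leq \lambda([q]^{(r)}) + \sigma/(r-1)!$. Now $\lambda([q]^{(r)}) = \frac{1}{q^r}\binom{q}{r}$ and $\lambda([t-1]^{(r)}) = \frac{1}{(t-1)^r}\binom{t-1}{r}$; by~\eqref{eq:asymptt-1} the gap between them is $\approx \frac{1}{r!}\binom{r}{2}(\frac{1}{q} - \frac{1}{t-1}) \approx \frac{\binom{r}{2}}{r!}\cdot\frac{t-1-q}{t^2}$, which by the lower bound on $t-q$ (of order $\Delta/t^{\alpha}$ for the appropriate $\alpha<1$ coming from $q \approx t(t/T)^{1/(r-1)} = t(1+\Delta/t)^{-1/(r-1)} \approx t - \Delta/(r-1)$, so $t - q \approx \Delta/(r-1)$!) is of order $\Delta/t^2$. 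Meanwhile $\sigma/(r-1)! \asymp \Delta^{1-1/(r-1)^2}/t$. Comparing, $\lambda([t-1]^{(r)}) - \lambda([q]^{(r)}) \asymp \Delta/t^2$ while the error term $\sigma/(r-1)!\asymp \Delta^{1-1/(r-1)^2}/t$ — and these do \emph{not} immediately compare favorably since $\Delta/t^2 \ll \Delta^{1-\epsilon}/t$. So the crude bound on edges meeting the light block is too lossy; \textbf{this is the main obstacle}. To fix it I would need a sharper accounting: the light vertices, being $\Delta \approx (r-1)(t-q)$ in number with individual weight $\leq x_q$, and each lying in a link $G_i$ of controlled size — in fact I'd revisit the proof and bound $\sum_{i>q} x_i L(G_i,\vec x)$ not by $\sigma \cdot \max L(G_i,\vec x)$ but exploit that for large $i$, $e(G_i)$ is small (by Proposition~\ref{prop:KK}(ii), $e(G_i) \leq rm/i$), giving $L(G_i,\vec x) \leq e(G_i) x_1^{?}$ or a two-level split, plus use the symmetrization Proposition~\ref{prop:symmetry} on near-equal weights to assume the heavy block is nearly uniform. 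The cleanest route is probably: assume for contradiction $\Delta > C_0$; use~\eqref{eq:lagridentity} applied at $i = T$ to get $r\lambda(G) = L(G_T,\vec x) \leq (1-x_T)^{r-1}\lambda(G_T)$, then bound $\lambda(G_T)$ via the induction hypothesis (Corollary~\ref{cor:FFapprox} for $r-1$) in terms of $e(G_T) \leq rm/T \leq r\binom{t}{r}/T$, obtaining $\lambda(G_T) \lesssim \frac{1}{(r-1)!}(1 - c\cdot\frac{\Delta}{t})$ — a genuine $\Theta(\Delta/t)$ saving — which combined with $(1-x_T)^{r-1} \geq 1 - O(1/t)$ forces $r\lambda(G) \lesssim \frac{1}{(r-1)!}(1 - c\Delta/t)$, i.e. $\lambda(G) \lesssim \frac{1}{r!}(1 - c\Delta/t)$, contradicting~\eqref{eq:basicassumption} (which says $\lambda(G) > \frac{1}{r!}(1 - \binom{r}{2}/t + O(t^{-2}))$) as soon as $c\Delta/t > \binom{r}{2}/t$, i.e. $\Delta > \binom{r}{2}/c =: C_0(r)$. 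I'd pursue this last approach: the heart is getting the quantitative $\Theta(\Delta/t)$ deficit in $\lambda(G_T)$ from $e(G_T) \leq rm/T$ being a $\Theta(\Delta/t)$-fraction below $\binom{t-1}{r-1}$, which is exactly where Proposition~\ref{prop:KK}(ii), the asymptotics~\eqref{eq:asymptt-1}, and the induction hypothesis combine.
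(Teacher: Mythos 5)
You correctly diagnose that the naive heavy/light split $L(G,\vec x)\le \lambda([q]^{(r)})+\sigma/(r-1)!$ is too lossy, but you misattribute the loss. The light-block bound $\sigma/(r-1)!$ is actually essentially tight (to first order in $S$): by Maclaurin applied blockwise, the contribution of edges meeting the light block is at most $\sum_{p\ge1}\frac{S^p(1-S)^{r-p}}{p!(r-p)!}=\frac{1-(1-S)^r}{r!}\approx\frac{rS}{r!}=\frac{S}{(r-1)!}$. The loss is in the \emph{heavy} block: since $\sum_{i\le q}x_i=1-S$ (not $1$), the right bound by~\eqref{eq:scaling} is $(1-S)^r\lambda([q]^{(r)})$, not $\lambda([q]^{(r)})$. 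You give away a factor $(1-S)^r\approx 1-rS$, i.e.\ a slack of order $rS/r!=S/(r-1)!$ — the same order as the entire light-block contribution — so the decomposition as you wrote it never produces a contradiction. With the correct scaling the two contributions combine into $\frac{1}{r!}\bigl(1-(1-S)^r\binom{r}{2}/q\bigr)$, and comparing against~\eqref{eq:basicassumption} reduces to $t-q\lesssim rtS$, i.e.\ $\Delta/(r-1)\lesssim 20r^2\Delta^{1-1/(r-1)^2}$, forcing $\Delta=O(1)$. This is exactly the paper's proof: it packages the corrected heavy/light split as $\lambda(G)\le L([T]^{(r)},\vec z)$ for the two-level symmetrized weighting $\vec z$, and the binomial identity $\sum_p\binom{r}{p}S^p(1-S)^{r-p}=1$ does the bookkeeping.

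Your proposed fallback (``cleanest route'': apply~\eqref{eq:lagridentity} at $i=T$, use $e(G_T)\le rm/T$ and the $(r-1)$-uniform induction hypothesis) does not close the gap either, because the claimed $\Theta(\Delta/t)$ deficit in $\lambda(G_T)$ is wrong. Writing $e(G_T)\le\binom{s}{r-1}$ with $\binom{s}{r-1}\approx\frac{t}{T}\binom{t-1}{r-1}$ gives $s\approx t-1-\frac{\Delta}{r-1}$, so $\lambda(G_T)\lesssim\frac{1}{(r-1)!}(1-\binom{r-1}{2}/s)$ and the improvement over the $\Delta=0$ baseline is only $\binom{r-1}{2}(1/s-1/(t-1))=\Theta(\Delta/t^2)$, not $\Theta(\Delta/t)$. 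Moreover the $(1-x_T)^{r-1}$ factor contributes at most $(r-1)x_T=O(\Delta^{-1/(r-1)}/t)$ by Lemma~\ref{lem:Tbound}, which vanishes as $\Delta$ grows. Tracking constants, the required inequality $\binom{r-1}{2}/s+(r-1)x_T\ge\binom{r}{2}/t$ fails until $s\le\frac{r-2}{r}t$, i.e.\ $\Delta\gtrsim\bigl((\tfrac{r}{r-2})^{r-1}-1\bigr)t$; this route only recovers $\Delta=O(t)$ (essentially Lemma~\ref{lem:10t}), not $\Delta\le C_0(r)$. The missing ingredient throughout is precisely the multiplicative $(1-S)^r$ rescaling of the heavy block, which is what makes the deficit in $q$ directly comparable to $S$ rather than to $\binom{r}{2}/t$.
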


\begin{proof}

If $\Delta\leq 4r$, then there is nothing to prove. Thus, suppose that $\Delta>4r$ , so that Lemma~\ref{lem:qestimate} and Corollary~\ref{cor:boundthetail} apply. It turns out that in this situation we can generously add all the missing edges to $G$ and show that $L([T]^{(r)},\vec{x})\leq\lambda([t-1]^{(r)})$, unless $\Delta\leq C_0$.
 
More precisely, let $S:=\sum_{i=q+1}^T x_i$, so that by Corollary~\ref{cor:boundthetail} we have $S\leq {20r\Delta^{1-\frac{1}{(r-1)^2}}}/{t}$. Let $\vec{z}$ be the weighting defined by $z_1=\dots=z_q=(1-S)/q$ and $z_{q+1}=\dots=z_T=S/(T-q)$. It is an immediate consequence of Proposition~\ref{prop:symmetry} that $L([T]^{(r)},\vec{z})=\max\{L([T]^{(r)},\vec{y})\colon y_1,\dots,y_T\geq 0, \sum_{i=1}^q y_i =1-S,\sum_{i=q+1}^T y_i =S\}$, so that, in particular, $L([T]^{(r)},\vec{x})\leq L([T]^{(r)},\vec{z})$. Therefore, we have
\begin{align}\label{eq:largeclique}
\lambda(G)=L(G,\vec{x})\leq L([T]^{(r)},\vec{x})\leq L([T]^{(r)},\vec{z}).
\end{align}
Estimating the latter, we obtain
\begin{align}\label{eq:generous}
L([T]^{(r)},\vec{z}) &= \binom{q}{r}\cdot \frac{1}{q^r}(1-S)^r+\sum_{p=1}^r \binom{T-q}{p}\binom{q}{r-p}\frac{S^{p}}{(T-q)^{p}}\frac{(1-S)^{r-p}}{q^{r-p}}\nonumber \\
&\stackrel{\eqref{eq:asymptt-1},\eqref{eq:qThetat}}{\leq} \frac{1}{r!}(1-S)^r(1-{\binom{r}{2}}{q^{-1}}) +O(t^{-2})+\sum_{p=1}^r \frac{1}{p!(r-p)!}S^{p}(1-S)^{r-p}\nonumber \\
&\leq\frac{1}{r!}\left(-(1-S)^r{\binom{r}{2}}{q^{-1}}+\sum_{p=0}^r \binom{r}{p}S^{p}(1-S)^{r-p}\right)+O(t^{-2})\nonumber \\
&=\frac{1}{r!}\left(1-(1-S)^r{\binom{r}{2}}{q^{-1}}\right)+O(t^{-2}).
\end{align}
On the other hand, as before, we have
\begin{align}\label{eq:restatetheobvious}
\lambda(G)\stackrel{\eqref{eq:basicassumption}}{>} \frac{1}{(t-1)^r}\binom{t-1}{r}\stackrel{\eqref{eq:asymptt-1}}{=}\frac{1}{r!}(1-{\binom{r}{2}}{t^{-1}})+O(t^{-2}).
\end{align}
Hence,~\eqref{eq:largeclique},~\eqref{eq:generous} and~\eqref{eq:restatetheobvious} together imply
%$$\frac{1}{r!}\left(1-(1-S)^r{\binom{r}{2}}{q^{-1}}+O(q^{-2})\right)\geq \frac{1}{r!}\left(1-{\binom{r}{2}}{t^{-1}}+O(t^{-2})\right),
%$$
%which for large $t$, due to $q=\Theta(t)$, translates to 
$$ {-(1-S)^r}/{q}\geq -{1}/{t}+O(t^{-2}),$$
which, due to $q=\Theta(t)$ by~\eqref{eq:qThetat}, yields
\begin{align*}%\label{eq:qrs}
q\geq t(1-S)^r+O(1)>t(1-rS)+O(1).
\end{align*}
Now, invoking Lemma~\ref{lem:qestimate}(i) and Corollary~\ref{cor:boundthetail} %$S\leq 20r \Delta^{1-1/(r-1)^2}/t$ and $q<r+t(\frac{t}{T})^{1/(r-1)}=r+t(1+\Delta/t)^{-1/(r-1)}$, thus
we obtain
$$r+t(1+\Delta/t)^{-1/(r-1)}\geq t(1-20r^2 \Delta^{1-1/(r-1)^2}t^{-1})+O(1),
$$
thus
\begin{align}\label{eq:finalmess}
(1+\Delta/t)^{-1/(r-1)}\geq 1-20r^2 \Delta^{1-1/(r-1)^2}t^{-1}+O(t^{-1}).
\end{align}
Now, on the one hand, if $\Delta=\Omega(t)$, then the left hand side of~\eqref{eq:finalmess} is $1-\Omega(1)$ while the right hand side is $1-o(1)$, a contradiction. On the other hand, for $\Delta=o(t)$ we can write 
$$ (1+\Delta/t)^{-1/(r-1)}<1-\frac{\Delta}{2t(r-1)}<1-\frac{\Delta}{2rt}.
$$    
With this,~\eqref{eq:finalmess} implies
$$1-\frac{\Delta}{2rt}\geq 1-\frac{20r^2 \Delta^{1-1/(r-1)^2}}{t}+O(t^{ -1}),
$$
or, equivalently,
$$\Delta\leq 40r^3\Delta^{1-1/(r-1)^2}+O(1).
$$
The last inequality can only hold if $\Delta$ is bounded. Hence,
$\Delta \leq C_0(r)$.
\end{proof}

\section{The small support case}\label{sec:smallsup}

Lemma~\ref{lem:bigmain} entails $T = t+C$, where $C$ is at most a constant: $0\leq C\leq C_0(r)$. In this section we apply this fact to complete the proof of Theorem~\ref{thm:main}.

First, we claim that Lemma~\ref{lem:bigmain} yields a stronger upper bound on $x_1$.
\begin{lem}\label{lem:finex1}
$x_1< 1/(t-\alpha)$ for some constant $\alpha=\alpha(r)\in\mathbb{N}$.
\end{lem}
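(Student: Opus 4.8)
The plan is to squeeze $x_1$ between the upper bound $r/t$ from Lemma~\ref{lem:coarsex1} and the constraint coming from $\lambda(G)>\frac{1}{(t-1)^r}\binom{t-1}{r}$, but now using the finer information that $T=t+C$ with $C$ bounded, rather than the crude $T<10t$. The key identity to exploit is Proposition~\ref{prop:derivatives}(i): $L(G_1,\vec x)=r\lambda(G)$, together with the scaling inequality $L(G_1,\vec x)\le(1-x_1)^{r-1}\lambda(G_1)$. The point is to get a bound on $\lambda(G_1)$ that is strictly below $1/(r-1)!$ by a term of order $1/t$, so that the factor $(1-x_1)^{r-1}$ cannot afford $x_1$ to be as large as, say, $\frac{r}{t}$; pushing this through will force $x_1<1/(t-\alpha)$ for a suitable constant $\alpha$.

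The mechanism for bounding $\lambda(G_1)$: since $G$ is left-compressed with $e(G)=m$, we have $e(G_1)\le \frac{r}{t}\binom{t}{r}=\binom{t-1}{r-1}$ from Proposition~\ref{prop:KK}(ii), so $G_1$ is an $(r-1)$-graph on at most $\binom{t-1}{r-1}$ edges supported on $[T]=[t+C]$. By the induction hypothesis (Corollary~\ref{cor:FFapprox} for $r-1$), $\lambda(G_1)\le \frac{1}{(t-1)^{r-1}}\binom{t-1}{r-1}=\frac{1}{(r-1)!}(1-\binom{r-1}{2}t^{-1})+O(t^{-2})$ by~\eqref{eq:asymptt-1}. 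Feeding this into $r\lambda(G)=L(G_1,\vec x)\le (1-x_1)^{r-1}\lambda(G_1)$ and using~\eqref{eq:basicassumption} gives
\[
\frac{r}{(t-1)^r}\binom{t-1}{r}< (1-x_1)^{r-1}\cdot\frac{1}{(r-1)!}\Big(1-\binom{r-1}{2}\tfrac1t\Big)+O(t^{-2}).
\]
Expanding the left side via~\eqref{eq:asymptt-1} as $\frac{1}{(r-1)!}(1-\binom{r}{2}t^{-1})+O(t^{-2})$ and rearranging yields $(1-x_1)^{r-1}\ge 1-\big(\binom{r}{2}-\binom{r-1}{2}\big)t^{-1}+O(t^{-2}) = 1-(r-1)t^{-1}+O(t^{-2})$. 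Since $(1-x_1)^{r-1}\le 1-(r-1)x_1+O(x_1^2)$ and $x_1=O(1/t)$ by Lemma~\ref{lem:coarsex1}, this forces $x_1\le \frac{1}{t}+O(t^{-2})$, i.e. $x_1<1/(t-\alpha)$ for a constant $\alpha=\alpha(r)$ once $t$ is large.

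The main obstacle I anticipate is that the bare induction hypothesis bound $\lambda(G_1)\le\frac{1}{(t-1)^{r-1}}\binom{t-1}{r-1}$ is \emph{exactly} what one would get from $G_1$ being a clique $[t-1]^{(r-1)}$, and the first-order terms in the resulting inequality for $x_1$ then match up too tightly — the computation above gives only $x_1\lesssim 1/t$, which is consistent with $x_1=1/t$ and does not by itself produce the gain encoded in "$t-\alpha$". To genuinely get below $1/t$ one likely needs a small extra input: either (a) a slightly better bound on $e(G_1)$, e.g. $e(G_1)\le\binom{t-1}{r-1}-c\,t^{r-3}$ or similar, derived from $m\le\binom{t}{r}-\binom{t-2}{r-2}$ and left-compressedness; or (b) using that $G_1$ is supported on only $t+C$ vertices and applying the induction hypothesis in the sharper range to conclude $\lambda(G_1)\le\frac{1}{(t+C)^{r-1}}\binom{t+C-1}{r-1}$ is actually not available, but one can still argue that $G_1$, having fewer than $\binom{t-1}{r-1}$ edges, has Lagrangian at most $\lambda([t-2]^{(r-1)})$ plus a controlled error via Proposition~\ref{prop:maxcolex}(ii) applied one level down. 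I expect the intended route to be a careful bookkeeping of these lower-order terms — the numerology $\binom{r}{2}-\binom{r-1}{2}=r-1$ versus the exponent $r-1$ in $(1-x_1)^{r-1}$ is exactly the delicate cancellation to watch, and the constant $\alpha$ will emerge from the $O(t^{-2})$ slack once the $e(G_1)$ bound is made quantitative.
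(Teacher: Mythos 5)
Your overall numerology is correct and matches the paper: the cancellation $\binom{r}{2}-\binom{r-1}{2}=r-1$ against the exponent $r-1$ in $(1-x_1)^{r-1}$ does yield $x_1\le 1/t+O(t^{-2})$, and (contrary to your closing worry) this already gives the lemma: $1/(t-\alpha)=1/t+\alpha/t^2+O(t^{-3})>1/t+O(t^{-2})$ for $\alpha$ large enough, so the conclusion $x_1<1/(t-\alpha)$ is a \emph{weaker} statement than $x_1<1/t$ and follows immediately from what you derived. No extra input along the lines of your (a) or (b) is needed.

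The genuine gap is in how you bound $\lambda(G_1)$. You invoke Proposition~\ref{prop:KK}(ii) to claim $e(G_1)\le\frac{r}{t}\binom{t}{r}=\binom{t-1}{r-1}$, but KK(ii) states $e(G_j)\le\frac{r}{j}e(G)$, so for $j=1$ it only gives the trivial $e(G_1)\le r\cdot m$; you have effectively substituted the bound for $G_t$ (or $G_T$) in place of $G_1$. The claim $e(G_1)\le\binom{t-1}{r-1}$ is false in general: since $G$ is supported on $[T]$ with $T=t+C$ and $C\ge 1$ possible, the degree of vertex~$1$ can be as large as $\binom{T-1}{r-1}>\binom{t-1}{r-1}$. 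The paper's route is both correct and simpler: $G_1\subseteq\{2,\dots,T\}^{(r-1)}$, so by monotonicity $\lambda(G_1)\le\lambda([T-1]^{(r-1)})=\frac{1}{(T-1)^{r-1}}\binom{T-1}{r-1}$, with no Kruskal--Katona and no appeal to the induction hypothesis (Corollary~\ref{cor:FFapprox}) at all. Since $T-1=t+C-1$ with $C\le C_0(r)$ by Lemma~\ref{lem:bigmain}, this expands to $\frac{1}{(r-1)!}\bigl(1-\binom{r-1}{2}t^{-1}\bigr)+O(t^{-2})$, exactly the input your calculation needs, and then your final display chain goes through verbatim.
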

%Suppose that $\binom{t-1}{r}\leq m \leq \binom{t}{r}-\binom{t-2}{r-2}$ and that $\lambda(G)>\lambda(H^{m,r})$. Our aim is to show that then $m\geq \binom{t}{r}-\gamma_rt^{r-2}$.
\begin{proof}
We have
$$
L(G,\vec{x}) \stackrel{\eqref{eq:lagridentity},\eqref{eq:scaling}}{\leq} \frac{1}{r}(1-x_1)^{r-1}\lambda(G_1)\leq \frac{1}{r}(1-x_1)^{r-1}\lambda([T-1]^{(r-1)}) \stackrel{\eqref{eq:mcl2}}{=} \frac{1}{r}\left(\frac{1-x_1}{T-1}\right)^{r-1}\binom{T-1}{r-1}.  
$$
%By Proposition~\ref{prop:maxcolex} we have %$x_1=\dots=x_{t-1}=1/(t-1)$ and $x_t=0$ would give (as $m\geq \binom{t-1}{r}$)
%$
%\lambda(H^{m,r})=\binom{t-1}{r}\frac{1}{(t-1)^r} 
%$.
Hence, by~\eqref{eq:basicassumption} we must have  
$$
\frac{1}{(t-1)^r}\binom{t-1}{r}< \frac{1}{r}\left(\frac{1-x_1}{T-1}\right)^{r-1}\binom{T-1}{r-1}.
$$
Applying~\eqref{eq:asymptt-1}, Lemma~\ref{lem:coarsex1} and Lemma~\ref{lem:bigmain}, we obtain
\begin{align*}
1-(r-1)x_1+O(t^{-2})&> \left(1-\binom{r}{2}\frac{1}{t}+O(t^{-2})\right)\left(1-\binom{r-1}{2}\frac{1}{t+C}+O(t^{-2})\right)^{-1}\\
&=\left(1-\binom{r}{2}\frac{1}{t}+O(t^{-2})\right)\left(1-\binom{r-1}{2}\frac{1}{t}+O(t^{-2})\right)^{-1}\\
&=\left(1-\binom{r}{2}\frac{1}{t}+O(t^{-2})\right)\left(1+\binom{r-1}{2}\frac{1}{t}+O(t^{-2})\right)\\
&=1-\binom{r}{2}\frac{1}{t}+\binom{r-1}{2}\frac{1}{t}+O(t^{-2})\\
&=1-(r-1)\frac{1}{t}+O(t^{-2}).
\end{align*}
%&=1-\binom{r}{2}\frac{1}{t}+\binom{r-1}{2}\frac{1}{t+C}+O(t^{-2})\\
Thus, 
$$x_1< \frac{1}{t}+O(t^{-2})< \frac{1}{t-\alpha}.  
$$
for some constant $\alpha=\alpha(r)$.
\end{proof}
Without loss of generality, we can assume that $\alpha\geq C$, for otherwise rename $\alpha$ to be $\max\{\alpha,C\}$.

\begin{lemma}\label{lem:trick}
In the above setting, $x_1< 2x_{t-3\alpha}$.
\end{lemma}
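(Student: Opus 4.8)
The plan is to show that the first $t-3\alpha$ weights cannot vary too wildly, by comparing the contribution of the largest vertex against a suitable lower vertex using the structure of $G$. Concretely, suppose for contradiction that $x_1\geq 2x_{t-3\alpha}$. Since $G$ covers pairs (Proposition~\ref{prop:lagrbasicfact}) and $T\geq t$, every pair among the first $T$ vertices is covered; in particular there is an edge $A\in G$ containing $\{1,2\}$, and more to the point we will use left-compressedness to relate $x_{t-3\alpha}$ to $e(G_1)$ or $e(G_{t-3\alpha})$. The idea mirrors the proof of Lemma~\ref{lem:pushbeyond}: produce an edge of $G$ whose weight is small and a non-edge in $[T]^{(r)}$ whose weight would be large if $x_1$ were much bigger than $x_{t-3\alpha}$, contradicting the optimality of $G$ under the swap $G\mapsto (G\setminus A)\cup B$.

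First I would pin down $e(G_1)$ from below and $e(G_{t-3\alpha})$ from above. By Proposition~\ref{prop:KK}(i), writing $m=\binom{x}{r}$ with $t-1\le x<t$, we have $e(G_1)\ge \binom{x-1}{r-1}\ge\binom{t-2}{r-1}$, which is close to $\binom{t-1}{r-1}$; in particular $e(G_1)$ exceeds $\binom{t-3\alpha}{r-1}$ for $t$ large since $\alpha$ is a constant. Because $G$ is left-compressed, $e(G_1)\ge\binom{t-3\alpha}{r-1}$ forces some $(r-1)$-set to lie in $G_1$ that is "beyond'' the initial segment $[t-3\alpha-1]^{(r-1)}$; more usefully, it lets me find a non-edge $B=\{t-3\alpha-r+2,\dots,t-3\alpha\}\cup\{1\}$-type set, or rather conclude that a certain colex-large $(r-1)$-set $C$ with $\max C$ around $t-3\alpha$ belongs to $G_1$. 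Combined with left-compressedness this yields an edge $A\ni 1$ with $L(A,\vec x)\le x_1 x_{t-3\alpha}\cdots$, giving a genuine bound on $x_1$ in terms of products of the $x_i$'s around index $t-3\alpha$. Running the same optimality/swap argument as in Lemma~\ref{lem:pushbeyond}, an edge $A$ with $\{t-1,T\}\subseteq A$ has weight at most $x_1^{r-2}x_{t-1}x_T$, while a left-compressed non-edge using the block ending near $t-3\alpha$ together with $T$ has weight at least $x_{t-3\alpha}^{r-1}x_T$ (or similar), so $x_1^{r-2}x_{t-1}x_T\ge x_{t-3\alpha}^{r-1}x_T$, hence $x_1^{r-2}x_{t-1}\ge x_{t-3\alpha}^{r-1}$; since $x_{t-1}\le x_1$ this gives $x_1\ge x_{t-3\alpha}$ trivially but not the factor $2$, so I need to be more careful about which block to use.

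The sharper route is to use the full counting identity $L(G_1,\vec x)=r\lambda(G)$ from Proposition~\ref{prop:derivatives}(i) together with the scaling bound and Lemma~\ref{lem:finex1}. We have $r\lambda(G)=L(G_1,\vec x)$, and $L(G_1,\vec x)$ is an elementary-symmetric-type sum over $G_1\subseteq[T-1]^{(r-1)}$; since $e(G_1)\ge\binom{t-2}{r-1}$, the contribution of the first $t-3\alpha$ coordinates alone is bounded, while the tail contributions are small by Corollary~\ref{cor:boundthetail} and the bound on the small weights. If $x_{t-3\alpha}$ were at most $x_1/2<\tfrac1{2(t-\alpha)}$, then all of $x_{t-3\alpha},x_{t-3\alpha+1},\dots,x_T$ are at most $\tfrac1{2(t-\alpha)}$, so these roughly $3\alpha+C$ coordinates (a constant number!) carry total weight at most $O(1/t)$, whence $\sum_{i=1}^{t-3\alpha-1}x_i\ge 1-O(1/t)$ spread over $t-3\alpha-1$ coordinates each at most $x_1<1/(t-\alpha)$ — but $(t-3\alpha-1)\cdot\frac1{t-\alpha}<1-O(1/t)$ would be violated, a contradiction. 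So the real engine is a simple pigeonhole: the top $t-3\alpha-1$ weights, each $<1/(t-\alpha)$, must sum to nearly all the mass, which is impossible if the $(t-3\alpha)$-th is too small, because only a bounded number of coordinates remain to absorb the deficit and they are individually small.

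Let me carry that out. By Lemma~\ref{lem:finex1}, $x_i\le x_1<\tfrac1{t-\alpha}$ for all $i$, so
\begin{align*}
1=\sum_{i=1}^T x_i=\sum_{i=1}^{t-3\alpha-1}x_i+\sum_{i=t-3\alpha}^{T}x_i< \frac{t-3\alpha-1}{t-\alpha}+(T-t+3\alpha+1)x_{t-3\alpha},
\end{align*}
and since $T-t=C\le\alpha$ this gives $(4\alpha+1)x_{t-3\alpha}>1-\frac{t-3\alpha-1}{t-\alpha}=\frac{2\alpha+1}{t-\alpha}$, hence
\begin{align*}
x_{t-3\alpha}>\frac{2\alpha+1}{(4\alpha+1)(t-\alpha)}\ge \frac{1}{2(t-\alpha)}>\frac{x_1}{2},
\end{align*}
using $\frac{2\alpha+1}{4\alpha+1}\ge\frac12$ and $x_1<\tfrac1{t-\alpha}$. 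This is exactly $x_1<2x_{t-3\alpha}$. The main obstacle is purely bookkeeping: ensuring the constant $\alpha$ (which we enlarged to dominate $C$) makes the index $t-3\alpha$ still fall within $[T]$ and the counting inequalities go through for all large $t$; once $\alpha=\alpha(r)$ is fixed this is automatic. No appeal to left-compressedness or the swap argument is actually needed — the pigeonhole on the bounded-size tail suffices — which is why I would present the short computation above rather than the more elaborate Lemma~\ref{lem:pushbeyond}-style argument I first contemplated.
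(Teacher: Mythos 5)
Your final computation is correct and is essentially the paper's proof: both rest on the same pigeonhole on the decreasing weights using $x_1<1/(t-\alpha)$ from Lemma~\ref{lem:finex1} together with $T-t=C\le\alpha$, so that the bounded tail $x_{t-3\alpha},\dots,x_T$ must carry enough mass (the paper phrases it as a contradiction, you derive $x_{t-3\alpha}>x_1/2$ directly, which is a purely cosmetic difference). The first two paragraphs exploring link-size and swap arguments are dead ends you correctly abandon and could be deleted.
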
 

\begin{proof}%[\cite{TPZZ}]
Suppose otherwise. Then 
$$2\alpha x_1\geq 4\alpha x_{t-3\alpha}\geq (3\alpha+C)x_{t-3\alpha}\geq \sum_{i=t-3\alpha+1}^T x_i,$$ 
and therefore 
$$(t-\alpha)x_1 = (t-3\alpha)x_1+2\alpha x_1 \geq \sum_{i=1}^{t-3\alpha}x_i+\sum_{i=t-3\alpha+1}^T x_i = \sum_{i=1}^T x_i=1,$$
a contradiction.
\end{proof}
\noindent
As an immediate consequence of Lemma~\ref{lem:trick} we obtain 
\begin{align}\label{eq:2hochr}
x_1^{r-2}x_{t-1}x_t < 2^{r-2}x_{t-3\alpha}^{r-2}x_{t-1}x_t.
\end{align}
\begin{lem}\label{lem:swap}
Let %$A,B\in [t]^{(r)}$ be such that 
\begin{itemize}
\item $\mathcal{A}:=\{A\in G\colon |A\cap \{t-1,\dots,T\}|\geq 2\}$,  
\item $\mathcal{B}:= \{B\in[t]^{(r)}\setminus G\colon |B\cap\{t-1,t\}|\leq 1 \text{ and } | B\cap \{t-3\alpha+1,\dots,t\}| \leq 2\}$. 
\end{itemize}
%|B\cap[t+1,T]|=0 
Then for all $A\in \mathcal{A}$ and $B\in \mathcal{B}$ we have 
$$L(B,\vec{x})> L(A,\vec{x})/2^{r-2}.$$
\end{lem}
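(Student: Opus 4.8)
The plan is to estimate $L(A,\vec x)$ from above and $L(B,\vec x)$ from below, and then compare the two using~\eqref{eq:2hochr}. First I would bound $L(A,\vec x)$. Since $A\in\mathcal A$ contains at least two vertices from $\{t-1,\dots,T\}$, and the weights are in descending order, the two largest weights available to these two vertices are $x_{t-1}$ and $x_t$ (because $T=t+C$ and the vertices in $\{t-1,\dots,T\}$ have weights $\le x_{t-1}$, with at most one of them equal to $x_{t-1}$). The remaining $r-2$ vertices of $A$ contribute a factor at most $x_1^{r-2}$. Hence $L(A,\vec x)=\prod_{i\in A}x_i\le x_1^{r-2}x_{t-1}x_t$. (One should be slightly careful that if $A$ contains \emph{three or more} vertices of $\{t-1,\dots,T\}$ the bound is only better, so $x_1^{r-2}x_{t-1}x_t$ is safe in all cases.)

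Next I would bound $L(B,\vec x)$ from below. A set $B\in\mathcal B$ has at most one vertex in $\{t-1,t\}$ and at most two vertices in $\{t-3\alpha+1,\dots,t\}$; all its vertices lie in $[t]$. The worst case for $\prod_{i\in B}x_i$ is when $B$ uses the smallest possible weights allowed by these constraints: one vertex could be $t$ (contributing $x_t$), a second could be $t-1$—no, at most one vertex in $\{t-1,t\}$, so the second-smallest vertex of $B$ is at most... here I need to be careful. The constraint $|B\cap\{t-3\alpha+1,\dots,t\}|\le 2$ together with $|B\cap\{t-1,t\}|\le1$ means $B$ has at most two vertices among the last $3\alpha$ indices, and if it has two, at most one of them is in $\{t-1,t\}$. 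So the two smallest vertices of $B$ have weights at least $x_t$ and $x_{t-3\alpha}$ respectively (the second-smallest vertex, if it lies in the last $3\alpha$ block, must be $\le t-1$ but $\ge t-3\alpha+1$, hence weight $\ge x_{t-1}\ge x_t$; wait, I want a clean bound), and the remaining $r-2$ vertices lie outside $\{t-3\alpha+1,\dots,t\}$, i.e.\ among $[t-3\alpha]$, hence each has weight $\ge x_{t-3\alpha}$. Thus $L(B,\vec x)=\prod_{i\in B}x_i\ge x_{t-3\alpha}^{r-2}\cdot x_{t-1}\cdot x_t$ — matching the structure of~\eqref{eq:2hochr} — possibly after checking that $x_{t-1}$ is indeed a valid lower bound for the weight of the second-smallest vertex (which it is, since any vertex index $\le t-1$ has weight $\ge x_{t-1}$). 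Then~\eqref{eq:2hochr} gives $L(A,\vec x)\le x_1^{r-2}x_{t-1}x_t<2^{r-2}x_{t-3\alpha}^{r-2}x_{t-1}x_t\le 2^{r-2}L(B,\vec x)$, which is exactly the claim.

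The main obstacle I anticipate is the bookkeeping in the lower bound for $L(B,\vec x)$: I must verify that the defining constraints on $\mathcal B$ force the precise weight pattern $x_{t-3\alpha}^{r-2}x_{t-1}x_t$ and not something weaker, and in particular that a vertex of $B$ sitting in $\{t-3\alpha+1,\dots,t-2\}$ really contributes a factor $\ge x_{t-3\alpha}$ while at most one vertex contributes the genuinely tiny factor $x_t$. The roles of the two constants $C$ (from $T=t+C$) and $\alpha$ (from Lemma~\ref{lem:finex1}, chosen with $\alpha\ge C$) are designed precisely so that the ``buffer'' of $3\alpha$ indices absorbs all the exceptional small weights; I would make the case analysis on $|B\cap\{t-3\alpha+1,\dots,t\}|\in\{0,1,2\}$ explicit, noting that in each case the product is at least $x_{t-3\alpha}^{r-2}x_{t-1}x_t$ because whenever fewer of the smallest indices are used, the freed-up vertices only make the product larger. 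Everything else is a direct substitution into~\eqref{eq:2hochr}.
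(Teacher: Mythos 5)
Your proposal is correct and follows the same route as the paper: bound $L(A,\vec x)$ above by $x_1^{r-2}x_{t-1}x_t$, bound $L(B,\vec x)$ below by $x_{t-3\alpha}^{r-2}x_{t-1}x_t$, and chain them through~\eqref{eq:2hochr}. The paper's proof is exactly this three-link chain stated in one line; your write-up just spells out the bookkeeping behind the lower bound on $L(B,\vec x)$ (that the constraint $|B\cap\{t-3\alpha+1,\dots,t\}|\le 2$ forces the $r-2$ smallest-index vertices of $B$ into $[t-3\alpha]$, while the two largest have indices at most $t-1$ and $t$), and, despite some mid-sentence hedging, lands on the right conclusion. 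One small remark: the constraint $|B\cap\{t-1,t\}|\le 1$ is not actually needed to obtain the lower bound on $L(B,\vec x)$ (it is there so that the swapped graph $G'$ in the subsequent proof of Theorem~\ref{thm:main} still fails to cover the pair $\{t-1,t\}$), so that part of your reasoning is harmless but superfluous.
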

\begin{proof}
$$L(A,\vec{x})\leq x_1^{r-2}x_{t-1}x_t\stackrel{\eqref{eq:2hochr}}{<}2^{r-2}x_{t-3\alpha}^{r-2}x_{t-1}x_t\leq 2^{r-2} L(B,\vec{x}).
$$ 
\end{proof}
%For every $r,C$ there exists $\gamma>0$ as follows: Suppose $\binom{t-1}{r} \leq m \leq \binom{t}{r}- \gamma t^{r-2}$ and $T=t+C$. Then $\lambda(G)\leq \lambda(H^{m,r})$. 
\noindent
Now we are ready to conclude our main theorem. 
\begin{proof}[Proof of Theorem~\ref{thm:main}]
Let $G$, $T$ and $\vec{x}$ be as defined in Section~\ref{sec:prelim}, in particular,~\eqref{eq:basicassumption} holds. By Lemma~\ref{lem:bigmain} this implies $T = t+C$ where $C<C_0(r)$. In this case, with the notation of Lemma~\ref{lem:swap}, we can assume that $\mathcal{A}\neq \emptyset$, for otherwise $G$ does not cover pairs, contradicting Proposition~\ref{prop:lagrbasicfact}. Suppose now that $|\mathcal{B}|\geq 2^{r-2}|\mathcal{A}|$, and let $G':=(G\setminus \mathcal{A})\cup \mathcal{B}$ (it does not matter that $e(G')>m$). Then Lemma~\ref{lem:swap} entails
$$\lambda(G')\geq L(G',\vec{x})>L(G,\vec{x})=\lambda(G).
$$
However, $G'$ does not cover any pair in $\{t-1,\dots,T\}^{(2)}$, therefore, by applying Proposition~\ref{prop:maxcolex}(i) $C+1$ times, we obtain $\lambda([t-1]^{(r)}) \geq \lambda(G')> \lambda(G)$, a contradiction. Thus, 
$$|\mathcal{B}|< 2^{r-2}|\mathcal{A}|\leq 2^{r-2}\binom{C+2}{2}\cdot \binom{T-2}{r-2}\leq  2^{2r}C^2\binom{t-2}{r-2}.
$$
Hence, we must have
\begin{align*}
\binom{t}{r}-m&= |[t]^{(r)}|-e(G)\leq |[t]^{(r)}\setminus G|\\&\leq |\mathcal{B}|+|\{C\in [t]^{(r)}: \{t-1,t\}\subseteq C\}|+ |\{D\in  [t]^{(r)}: |\{t-3\alpha+1,\dots,t\}\cap D| \geq 3\}|\\
&\leq  (2^{2r}C^2+1)\binom{t-2}{r-2} + O(t^{r-3})
\leq \gamma_r t^{r-2},
\end{align*}
as claimed. Thus the proof of Theorem~\ref{thm:main} is completed.
\end{proof}

\section{A refinement for $T\leq t$}\label{sec:refinement}

%We proceed analogously: suppose that $\binom{t-1}{r}\leq m \leq \binom{t}{r}-\binom{t-2}{r-2}$ and that $\lambda(G)>\lambda(H^{m,r})$. 
Here we prove Theorem~\ref{thm:refine}. In this section let $G$ be a graph on $[t]$ with $e(G)=m$ and $\binom{t-1}{r}\leq m \leq \binom{t}{r}-\binom{t-2}{r-2}$, maximising the Lagrangian amongst all $r$-graphs on $[t]$ with $m$ edges. Suppose that $\lambda(G)>\lambda(H^{m,r})=\frac{1}{(t-1)^r}\binom{t-1}{r}$. Our aim is to show that then $m\geq \binom{t}{r}- \binom{t-2}{r-2}-\delta_rt^{r-9/4}$ for a constant $\delta_r>0$. 

Let $\vec{x}$ be a weighting attaining $\lambda(G)$; we can assume that $\vec{x}$ has exactly $t$ non-zero entries (otherwise $\lambda(G)\leq \lambda([t-1]^{(r)})$, a contradiction) and that the entries of $\vec{x}$ are listed in descending order. It is a straightforward check that Propositions~\ref{prop:lagrbasicfact} and~\ref{prop:derivatives} (with $T=t$) remain valid for the just defined $G$ and $\vec{x}$. 
%that $T\leq t$ (this is for instance the case when $r=3$, according to~\cite{Tb}). 
%Suppose that $\lambda(G)>\lambda(H^{m,r})=\lambda([t-1]^{(r)})$ 
Hence, analogously to Lemma~\ref{lem:finex1} (with no need to establish Lemma~\ref{lem:bigmain}) we obtain the following upper bound on $x_1$.
\begin{lem}
$x_1<1/(t-r+1)$. 
\end{lem}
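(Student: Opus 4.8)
The statement to prove is $x_1 < 1/(t-r+1)$ for the graph $G$ on $[t]$ with $e(G)=m$ maximising the Lagrangian, under the assumption $\lambda(G) > \frac{1}{(t-1)^r}\binom{t-1}{r}$, with weighting $\vec{x}$ attaining $\lambda(G)$ and having exactly $t$ nonzero entries.

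The plan is to mirror the proof of Lemma~\ref{lem:finex1}, but now exploiting that the support has \emph{exactly} $t$ vertices rather than $t+C$, which lets us replace the crude bound $\lambda(G_1) \le \lambda([T-1]^{(r-1)})$ by $\lambda(G_1) \le \lambda([t-1]^{(r-1)})$ — a genuinely better estimate since $G_1 \subseteq [t]^{(r-1)}$ after removing vertex $1$, and in fact $G_1$ is supported on $\{2,\dots,t\}$, a set of size $t-1$. First I would write down, exactly as in Lemma~\ref{lem:finex1}, the chain
$$
\frac{1}{(t-1)^r}\binom{t-1}{r} \stackrel{\eqref{eq:basicassumption}}{<} L(G,\vec{x}) \stackrel{\eqref{eq:lagridentity},\eqref{eq:scaling}}{\leq} \frac{1}{r}(1-x_1)^{r-1}\lambda(G_1) \leq \frac{1}{r}(1-x_1)^{r-1}\lambda([t-1]^{(r-1)}) \stackrel{\eqref{eq:mcl2}}{=} \frac{1}{r}\left(\frac{1-x_1}{t-1}\right)^{r-1}\binom{t-1}{r-1}.
$$
Then I would simplify: since $\binom{t-1}{r} = \frac{t-r}{r}\cdot\frac{1}{?}$... more directly, using $\frac{1}{r}\binom{t-1}{r-1} = \frac{1}{t-1}\binom{t-1}{r}\cdot\frac{?}{}$; cleanest is to note $\binom{t-1}{r-1} = \frac{r}{t-1}\binom{t-1}{r}\cdot\frac{t-1}{t-r}$, i.e. $\frac{1}{r}\binom{t-1}{r-1} = \frac{1}{t-r}\binom{t-1}{r}$. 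Substituting, the inequality becomes
$$
\frac{1}{(t-1)^r}\binom{t-1}{r} < \frac{(1-x_1)^{r-1}}{(t-1)^{r-1}(t-r)}\binom{t-1}{r},
$$
so after cancelling $\binom{t-1}{r}$ and rearranging, $(1-x_1)^{r-1} > \frac{t-r}{t-1}$, hence $1-x_1 > \left(\frac{t-r}{t-1}\right)^{1/(r-1)}$.

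It then remains to deduce $x_1 < 1/(t-r+1)$, i.e. $1-x_1 > \frac{t-r}{t-r+1} = 1 - \frac{1}{t-r+1}$. So I need the elementary inequality $\left(\frac{t-r}{t-1}\right)^{1/(r-1)} \geq \frac{t-r}{t-r+1}$, equivalently $\frac{t-r}{t-1} \geq \left(\frac{t-r}{t-r+1}\right)^{r-1}$. Writing $n = t-r$, this is $\frac{n}{n+r-1} \geq \left(\frac{n}{n+1}\right)^{r-1}$, i.e. $\frac{n+1}{n} \cdot \left(\frac{n}{n+1}\right)^{r-1}\cdot\frac{n+1}{?}$... cleanly: $\left(1+\frac{1}{n}\right)^{r-1} \geq 1 + \frac{r-1}{n} \geq \frac{n+r-1}{n}$ by Bernoulli's inequality, which gives exactly what is needed after inverting. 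I would present this as: by Bernoulli, $\left(\frac{t-r+1}{t-r}\right)^{r-1} \geq 1 + \frac{r-1}{t-r} = \frac{t-1}{t-r}$, hence $\frac{t-r}{t-1} \geq \left(\frac{t-r}{t-r+1}\right)^{r-1}$, so $\left(\frac{t-r}{t-1}\right)^{1/(r-1)} \geq \frac{t-r}{t-r+1}$, and therefore $1-x_1 > \frac{t-r}{t-r+1}$, yielding $x_1 < \frac{1}{t-r+1}$.

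There is essentially no obstacle here — the only mild subtlety is getting the arithmetic identity $\frac{1}{r}\binom{t-1}{r-1} = \frac{1}{t-r}\binom{t-1}{r}$ and the Bernoulli step right; both are routine. The conceptual content is entirely that the support size is exactly $t$, so the inductive/trivial bound $\lambda(G_1) \le \lambda([t-1]^{(r-1)})$ is available without needing Lemma~\ref{lem:bigmain}, and it is sharp enough to give the $t-r+1$ denominator rather than merely $t-\alpha$. I would flag that this strengthening of $x_1$ is what later permits taking $\gamma_r$ close to $1/(r-2)!$ in Theorem~\ref{thm:refine}.

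\begin{proof}
Since $G$ is supported on $[t]$, the link $G_1$ is an $(r-1)$-graph on $\{2,\dots,t\}$, so $\lambda(G_1)\le\lambda([t-1]^{(r-1)})$. Combining this with Propositions~\ref{prop:derivatives} and the scaling inequality~\eqref{eq:scaling}, and then~\eqref{eq:mcl2}, we get
$$
L(G,\vec{x}) = \frac{1}{r}L(G_1,\vec{x}) \leq \frac{1}{r}(1-x_1)^{r-1}\lambda(G_1) \leq \frac{1}{r}(1-x_1)^{r-1}\lambda([t-1]^{(r-1)}) = \frac{1}{r}\left(\frac{1-x_1}{t-1}\right)^{r-1}\binom{t-1}{r-1}.
$$
Using the identity $\frac{1}{r}\binom{t-1}{r-1}=\frac{1}{t-r}\binom{t-1}{r}$ and the assumption~\eqref{eq:basicassumption}, this yields
$$
\frac{1}{(t-1)^r}\binom{t-1}{r} < \frac{(1-x_1)^{r-1}}{(t-1)^{r-1}(t-r)}\binom{t-1}{r},
$$
hence, after cancelling and rearranging,
$$
(1-x_1)^{r-1} > \frac{t-r}{t-1}.
$$
On the other hand, by Bernoulli's inequality,
$$
\left(\frac{t-r+1}{t-r}\right)^{r-1} \geq 1+\frac{r-1}{t-r} = \frac{t-1}{t-r},
$$
so that $\left(\dfrac{t-r}{t-r+1}\right)^{r-1}\leq \dfrac{t-r}{t-1}$. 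Therefore $(1-x_1)^{r-1} > \left(\dfrac{t-r}{t-r+1}\right)^{r-1}$, which gives $1-x_1 > \dfrac{t-r}{t-r+1}$ and hence
$$
x_1 < \frac{1}{t-r+1}.
$$
\end{proof}
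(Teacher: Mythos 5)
Your proof is correct and follows essentially the same route as the paper's: the same chain $\lambda(G) \le \tfrac{1}{r}(1-x_1)^{r-1}\lambda([t-1]^{(r-1)})$ leading to $(1-x_1)^{r-1} > \tfrac{t-r}{t-1}$, then an elementary inequality to conclude. The only (cosmetic) difference is the final step: the paper reduces to $(t-r)^{r-2}(t-1) < (t-r+1)^{r-1}$ and cites AM--GM, while you invoke Bernoulli's inequality; these are equivalent routine arguments.
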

\begin{proof}
We have
\begin{align*}
\frac{1}{(t-1)^r}\binom{t-1}{r}&<L(G,\vec{x}) \stackrel{\eqref{eq:lagridentity},\eqref{eq:scaling}}{\leq} \frac{1}{r}(1-x_1)^{r-1}\lambda(G_1)\leq \frac{1}{r}(1-x_1)^{r-1}\lambda([t-1]^{(r-1)})\\ &\stackrel{\eqref{eq:mcl2}}{=} \frac{1}{r}\left(\frac{1-x_1}{t-1}\right)^{r-1}\binom{t-1}{r-1}.  
\end{align*}
%=1-\frac{r-1}{t-1}
This translates to 
$$(1-x_1)^{r-1}>\frac{t-r}{t-1}.
$$
Since $(1-x_1)^{r-1}$ is a decreasing function of $x_1$, it suffices to verify that $(1-1/(t-r+1))^{r-1}< (t-r)/(t-1)$, or, equivalently, that $(t-r)^{r-2}(t-1)<(t-r+1)^{r-1}$. The latter inequality holds by AM-GM.
\end{proof}

Now, with hindsight, select an integer $k\sim t^{1/4}$.
\begin{lem}
\begin{align}\label{eq:crunch}
x_1<\frac{k+1}{k}x_{t-(k+1)r}.
\end{align}
\end{lem}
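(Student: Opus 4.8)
The plan is to argue by contradiction, mimicking the counting trick used in Lemma~\ref{lem:trick}, but now exploiting the sharper bound $x_1<1/(t-r+1)$ just established and the fact that $T=t$. Suppose that \eqref{eq:crunch} fails, i.e.\ $x_1\geq\frac{k+1}{k}x_{t-(k+1)r}$, equivalently $x_{t-(k+1)r}\leq\frac{k}{k+1}x_1$. Note that for large $t$ we have $(k+1)r=O(t^{1/4})\ll t$, so the index $t-(k+1)r$ is a legitimate (positive) index, and $x_{t-(k+1)r}$ is one of the $t$ positive entries of $\vec{x}$.

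The key step is to split the sum $\sum_{i=1}^{t}x_i=1$ into the first $t-(k+1)r$ terms and the last $(k+1)r$ terms, bounding each block by its largest entry. The first block contributes at most $(t-(k+1)r)x_1$, while the second block contributes at most $(k+1)r\cdot x_{t-(k+1)r}\leq (k+1)r\cdot\frac{k}{k+1}x_1=kr\,x_1$. Adding these, and using $t-(k+1)r+kr=t-r$, gives
$$
1=\sum_{i=1}^{t}x_i\leq\bigl(t-(k+1)r\bigr)x_1+(k+1)r\,x_{t-(k+1)r}\leq (t-r)x_1<\frac{t-r}{t-r+1}<1,
$$
where the penultimate inequality is the bound $x_1<1/(t-r+1)$. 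This contradiction proves \eqref{eq:crunch}.

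There is no real obstacle here: the argument is a one-line averaging estimate, and the only thing to check is that the numerology of the split works out, namely that the $(k+1)r$ terms in the tail, each shrunk by the factor $k/(k+1)$, together with the $t-(k+1)r$ leading terms, account for at most $t-r$ copies of $x_1$ — which is exactly what forces the sum below $1$. The choice of the particular index $t-(k+1)r$ (and of $k\sim t^{1/4}$) is dictated by the subsequent application, where \eqref{eq:crunch} will be used to compare the weight of an edge meeting $\{t-1,t\}$ against that of a suitably chosen missing edge of $[t]^{(r)}$, in the spirit of Lemma~\ref{lem:swap}.
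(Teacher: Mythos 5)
Your proof is correct and is essentially the same argument as the paper's: assume the inequality fails, bound the first $t-(k+1)r$ weights by $x_1$ and the last $(k+1)r$ weights by $x_{t-(k+1)r}\leq\frac{k}{k+1}x_1$, and conclude $1\leq(t-r)x_1$, contradicting $x_1<1/(t-r+1)$. The only difference is cosmetic bookkeeping in how the contradiction is displayed.
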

\begin{proof}
Suppose otherwise. Then 
\begin{align}\label{eq:rkx1}
rk x_1\geq r(k+1)x_{t-(k+1)r}\geq \sum_{i=t-(k+1)r+1}^t x_i,
\end{align}
 and therefore 
$$(t-r+1)x_1 >(t-r(k+1))x_1+rkx_1\stackrel{\eqref{eq:rkx1}}{\geq} \sum_{i=1}^{t-r(k+1)} x_i + \sum_{i=t-r(k+1)+1}^{t} x_i = \sum_{i=1}^t x_i=1,$$
a contradiction. 
\end{proof}
\begin{lem}\label{lem:swap2}
Let %$A,B\in [t]^{(r)}$ be such that 
\begin{itemize}
\item $\mathcal{A}:=\{A\in G\colon\{t-1,t\}\subseteq A\}$ and 
\item $\mathcal{B}:= \{B\in [t]^{(r)}\setminus G\colon |B\cap\{t-1,t\}|\leq 1 \text{ and } | B\cap\{t-(k+1)r+1,\dots,t\}| \leq 2\}$. 
\end{itemize}
Then for all $A\in \mathcal{A}$ and $B\in \mathcal{B}$ we have 
$$L(B,\vec{x})> L(A,\vec{x})\cdot\left(\frac{k}{k+1}\right)^{r-2}.$$
\end{lem}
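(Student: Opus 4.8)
The plan is to imitate the proof of Lemma~\ref{lem:swap}, which is the analogous statement in the unbounded-support setting. First I would bound $L(A,\vec{x})$ from above for $A\in\mathcal{A}$: since $\{t-1,t\}\subseteq A$ and the remaining $r-2$ vertices of $A$ receive weights at most $x_1$ each (as $\vec{x}$ is in descending order), we get $L(A,\vec{x})=\prod_{i\in A}x_i\leq x_1^{r-2}x_{t-1}x_t$. Next I would bound $L(B,\vec{x})$ from below for $B\in\mathcal{B}$: the condition $|B\cap\{t-1,t\}|\leq 1$ together with $|B\cap\{t-(k+1)r+1,\dots,t\}|\leq 2$ forces $B$ to contain at least $r-2$ vertices with index at most $t-(k+1)r$, plus at most two further vertices whose indices are at most $t$; hence every factor $x_i$ with $i\in B$ is at least $x_t$, and at least $r-2$ of them are at least $x_{t-(k+1)r}$, giving $L(B,\vec{x})=\prod_{i\in B}x_i\geq x_{t-(k+1)r}^{r-2}x_{t-1}x_t$ (using $x_{t-1},x_t$ as crude lower bounds for the last two factors).

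Combining these two estimates with the inequality $x_1<\frac{k+1}{k}x_{t-(k+1)r}$ from~\eqref{eq:crunch} yields
\begin{align*}
L(A,\vec{x})\leq x_1^{r-2}x_{t-1}x_t<\left(\frac{k+1}{k}\right)^{r-2}x_{t-(k+1)r}^{r-2}x_{t-1}x_t\leq\left(\frac{k+1}{k}\right)^{r-2}L(B,\vec{x}),
\end{align*}
which rearranges to $L(B,\vec{x})>L(A,\vec{x})\cdot(k/(k+1))^{r-2}$, as required. So the whole argument is a short chain of inequalities and there is no serious obstacle; the only point requiring a little care is verifying that a set $B\in\mathcal{B}$ really does have at least $r-2$ of its vertices outside the top block $\{t-(k+1)r+1,\dots,t\}$, which follows immediately from the defining condition $|B\cap\{t-(k+1)r+1,\dots,t\}|\leq 2$ since $|B|=r$. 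One should also note the harmless convention that if $t-(k+1)r\leq 0$ the set $\mathcal{B}$ is empty and there is nothing to prove, so we may assume $k$ is small enough (it is, since $k\sim t^{1/4}$) that these indices make sense.

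\begin{proof}
Let $A\in\mathcal{A}$ and $B\in\mathcal{B}$. Since $\{t-1,t\}\subseteq A$ and the weights $x_1\geq x_2\geq\dots\geq x_t$ are in descending order, the remaining $r-2$ vertices of $A$ each carry weight at most $x_1$, so
$$L(A,\vec{x})=\prod_{i\in A}x_i\leq x_1^{r-2}x_{t-1}x_t.$$
On the other hand, since $|B|=r$ and $|B\cap\{t-(k+1)r+1,\dots,t\}|\leq 2$, at least $r-2$ vertices of $B$ have index at most $t-(k+1)r$, hence carry weight at least $x_{t-(k+1)r}$; the remaining (at most two) vertices carry weight at least $x_t\geq 0$, and since moreover $x_i\geq x_{t-1}\geq x_t$ for every $i\leq t$, we obtain
$$L(B,\vec{x})=\prod_{i\in B}x_i\geq x_{t-(k+1)r}^{r-2}x_{t-1}x_t.$$
Combining these bounds with~\eqref{eq:crunch} gives
$$L(A,\vec{x})\leq x_1^{r-2}x_{t-1}x_t\stackrel{\eqref{eq:crunch}}{<}\left(\frac{k+1}{k}\right)^{r-2}x_{t-(k+1)r}^{r-2}x_{t-1}x_t\leq\left(\frac{k+1}{k}\right)^{r-2}L(B,\vec{x}),$$
which rearranges to $L(B,\vec{x})>L(A,\vec{x})\cdot\left(k/(k+1)\right)^{r-2}$, as claimed.
\end{proof}
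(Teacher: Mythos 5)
Your argument is correct and identical to the paper's: both bound $L(A,\vec{x})\leq x_1^{r-2}x_{t-1}x_t$, invoke~\eqref{eq:crunch} to replace $x_1$ by $\frac{k+1}{k}x_{t-(k+1)r}$, and bound $L(B,\vec{x})\geq x_{t-(k+1)r}^{r-2}x_{t-1}x_t$. One minor imprecision: your justification ``$x_i\geq x_{t-1}\geq x_t$ for every $i\leq t$'' is false for $i=t$; what actually yields the factor $x_{t-1}$ (rather than a second $x_t$) in the lower bound for $L(B,\vec{x})$ is the hypothesis $|B\cap\{t-1,t\}|\leq 1$, which guarantees that at most one of the (at most two) high-index vertices of $B$ lies in $\{t-1,t\}$, so the other has index at most $t-2$ and weight at least $x_{t-1}$.
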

\begin{proof}
$$L(A,\vec{x})\leq x_1^{r-2}x_{t-1}x_t\stackrel{\eqref{eq:crunch}}{<}\left(\frac{k+1}{k}\right)^{r-2}x_{t-(k+1)r}^{r-2}x_{t-1}x_t\leq \left(\frac{k+1}{k}\right)^{r-2} L(B,\vec{x}).
$$ 
\end{proof} 
\begin{proof}[Proof of Theorem~\ref{thm:refine}]
If $\mathcal{A}=\emptyset$ then $G$ does not cover pairs, contradicting Proposition~\ref{prop:lagrbasicfact}; %$\lambda([t-1]^{(r)})\geq \lambda(G)$, a contradiction; 
so, we can assume that $\mathcal{A}\neq \emptyset$. Suppose that $|\mathcal{B}|\geq (\frac{k+1}{k})^{r-2}|\mathcal{A}|$, and let $G':=(G\setminus \mathcal{A})\cup \mathcal{B}$ (it does not matter that $e(G')>m$). Then Lemma~\ref{lem:swap2} entails
$$\lambda(G')\geq L(G',\vec{x})>L(G,\vec{x})=\lambda(G).
$$
However, $G'$ does not cover the pair $\{t-1,t\}$, therefore, by Proposition~\ref{prop:maxcolex}, $\lambda([t-1]^{(r)}) \geq \lambda(G')> \lambda(G)$, a contradiction. So 
$$|\mathcal{B}|< \left(\frac{k+1}{k}\right)^{r-2}|\mathcal{A}|\leq \left(1+\frac{r}{k}\right)|\mathcal{A}|.
$$
Thus, 
%|[t]^{(r)}|-e(G)\leq 
\begin{align*}
\binom{t}{r}-m&= |[t]^{(r)}\setminus G|\\
&\leq |\mathcal{B}|+|\{C\in [t]^{(r)}: \{t-1,t\}\subseteq C\}\setminus \mathcal{A}| \\
&+ |\{D\in  [t]^{(r)}: |\{t-(k+1)r+1,\dots,t\}\cap D| \geq 3\}|\\
&\leq  \left(1+\frac{r}{k}\right)|\mathcal{A}| + \binom{t-2}{r-2}-|\mathcal{A}|+O(k^3t^{r-3})\\
&\leq \left(1+\frac{r}{k}\right)\binom{t-2}{r-2}+O(k^3t^{r-3})\leq \binom{t-2}{r-2}+\delta_rt^{r-9/4}.
\end{align*}
The last inequality is the explanation for the choice of $k$.
%This completes the proof of Theorem~\ref{thm:refine}.
\end{proof}

\section{Concluding remarks}

Closing the remaining gap of the Frankl-F\"uredi Conjecture is a challenging open problem even for $r=3$. As remarked by Talbot in ~\cite{Tb} (with emphasis on $r=3$) the values of $m$ split in two different regimes: $R_1=\{m:\exists t\in \mathbb{N}: \binom{t-1}{r}\leq m \leq \binom{t}{r}-\binom{t-2}{r-2}\}$ and $R_2=\mathbb{N}\setminus R_1$. This split is explained by the fact that in $R_1$ we have  $\lambda(H^{m,r})=\lambda([t-1]^{(r)})$, while in $R_2$ the value $\lambda(H^{m,r})$ `jumps' with every increasing $m$. In the present paper we dealt solely with $R_1$, having confirmed Conjecture~\ref{conj:FF} for all but the $O(t^{r-2})$ largest values of $m$ in each of its intervals (where the interval itself is of length $O(t^{r-1})$). We are wondering, if a version of our argument can be used to deduce $T\leq t$, in which case Theorem~\ref{thm:refine} will further reduce the above range as it already does for $r=3$. With this said, it seems that both completely solving $R_1$  and tackling $R_2$ will require some new ideas.

Talbot's theorem for $r=3$ applies also to small values of $t$. It would be interesting to find an argument that would confirm Conjecture~\ref{conj:FF} in the principal case for \emph{all} $m=\binom{t}{r}$.

With regard to blow-up densities, we have determined, for all $m$ specified by Theorem~\ref{thm:main}, the size of the asymptotically largest blow-up amongst all $r$-graphs of size $m$. We hope that this will find applications in hypergraph Tur\'{a}n problems, as this was the initial motivation behind considering the Lagrangian.
 
\section*{Acknowledgements}

I am grateful to Imre Leader for the helpful feedback.

\end{document}